\newcommand{\ds}[1]{\displaystyle{#1}}
\newcommand{\Lspace}[2]{L^{#1}\left(\Omega;#2\right)}
\newcommand{\Cspace}[3]{\mcC^{#1}\left(#2;#3\right)}
\newcommand{\Cspacepedex}[4]{\mcC^{#1}_{#2}\left(#3;#4\right)}
\newcommand{\Cspacemix}[3]{\mcC^{#1,mix}\left(#2;#3\right)}
\newcommand{\Cspacemixpedex}[4]{\mcC^{#1,mix}_{#2}\left(#3;#4\right)}
\newcommand{\Holder}[2]{\mathcal{C}^{#1}\left(#2\right)}
\newcommand{\Holdermix}[2]{\mathcal{C}^{#1,mix}\left(#2\right)}
\newcommand{\Id}{\mathrm{Id}}
\newcommand{\e}{\textmd{e}}
\newcommand{\ddx}{\text{d}x}
\newcommand{\Div}{\textrm{div}}
\newcommand{\Grad}{\nabla}
\newcommand{\Trdiag}[1]{\mathrm{Tr}_{|_{1:#1}}}
\newcommand{\Trdiaggen}[2]{\mathrm{Tr}_{|_{#1:#2}}}
\newcommand{\R}{\mathbb R}
\newcommand{\N}{\mathbb N}
\newcommand{\T}{\mathcal T}
\newcommand{\mcC}{\mathcal C}
\newcommand{\mcL}{\mathcal L}
\newcommand{\vet}[1]{\mathbf{#1}}
\newcommand{\bc}[2]{\left(\begin{array}{c}#1\\#2\end{array}\right)}
\newcommand{\mean}[1]{\mathbb E\left[#1\right]}
\newcommand{\corr}[1]{E^{#1}}
\newcommand{\corrsd}[1]{E_h^{#1}}
\newcommand{\corrfd}[1]{E_{L,h}^{#1}}
\newcommand{\norm}[2]{\left\|#1\right\|_{#2}}
\newcommand{\seminorm}[2]{\left|#1\right|_{#2}}
\newcommand{\abs}[1]{\left|#1\right|}
\newcommand{\der}[1]{\partial^{{#1}}}
\newcommand{\hder}[2]{\partial^{#1}_{#2}}
\newcommand{\interp}[1]{P_{#1}}
\newcommand{\spinterp}[1]{\widehat P_{#1}}
\newcommand{\spfe}[1]{\widehat V_{#1}}
\newtheorem{theorem}{Theorem}
\newtheorem{proposition}[theorem]{Proposition}
\newtheorem{corollary}[theorem]{Corollary}
\newtheorem{lemma}[theorem]{Lemma}
\newtheorem{definition}[theorem]{Definition}
\newtheorem{remark}[theorem]{Remark}
\newenvironment{proof}{\noindent{\bf Proof. \/}\begin{small}\noindent}{\hfill\EndProofMarker\end{small}}
\newcommand{\EndProofMarker}{$\Box$}
\begin{document}

\title{
Regularity and sparse approximation of the recursive first moment equations for the lognormal Darcy problem
\footnote{F. Bonizzoni acknowledges partial support from the Austrian Science Fund (FWF) through the project F 65, and has
been supported by the FWF Firnberg-Program, grant T998.}
}

\author{Francesca Bonizzoni$^\sharp$, Fabio Nobile$^\dag$}

\maketitle 

\begin{center}
{\small 
$^\sharp$ Faculty of Mathematics\\
				University of Vienna\\ 
				Oskar-Morgenstern-Platz 1,1090 Wien, Austria\\
{\tt francesca.bonizzoni@univie.ac.at}\\
$^\dag$ CSQI -- MATHICSE\\
	Ecole Polytechnique F\'ed\'erale de Lausanne\\
	Station 8, CH-1015 Lausanne, Switzerland\\
{\tt fabio.nobile@epfl.ch}\\	
}
\end{center}

\date{}

\noindent
{\bf Keywords}: Uncertainty Quantification; Elliptic PDEs with random coefficient; Log-normal distribution; Perturbation technique; Moment equations; H\"older spaces with mixed regularity; Sparse approximation

\vspace*{0.5cm}

\noindent
{\bf AMS Subject Classification}: 60H15, 35R60, 35B20, 41A58, 65N12, 65N15

\vspace*{0.5cm}

\begin{abstract}
We study the Darcy boundary value problem with log-normal permeability field. We adopt a perturbation approach, expanding the solution in Taylor series around the nominal value of the coefficient, and approximating the expected value of the stochastic solution of the PDE by the expected value of its Taylor polynomial. The recursive deterministic equation satisfied by the expected value of the Taylor polynomial (first moment equation) is formally derived. Well-posedness and regularity results for the recursion are proved to hold in Sobolev space-valued H\"older spaces with mixed regularity. The recursive first moment equation is then discretized by means of a sparse approximation technique, and the convergence rates are derived.
\end{abstract}

\section{Introduction}
\label{Introduction}

In many applications, the input parameters of the mathematical model describing the system behavior are unavoidably affected by uncertainty, as a consequence of the incomplete knowledge or the intrinsic variability of certain phenomena. 
\emph{Uncertainty Quantification} (UQ) conveniently incorporates the input variability or lack of knowledge inside the model, often by describing the uncertain parameters as random variables or random fields, and aims to infer the uncertainty in the solution of the model, or the specific output quantities of interest, by computing their statical moments.

The physical phenomenon we are interested in this work is the single-phase flow of a fluid in a bounded heterogenous saturated porous medium. In particular, we consider the following stochastic partial differential equation (PDE), named the Darcy problem, posed in the complete probability space $(\Omega,\mathcal F,\mathbb P)$ and in the bounded physical domain $D\subset\R^d$ ($d=2,3$):
\begin{equation}
\label{eq:darcy_strong}
	- \Div\left(e^{Y(\omega,x)} \Grad u(\omega,x)\right)=f(x)
	\quad \text{for }x\in D\textmd{ and a.e. }\omega\in\Omega
\end{equation}
endowed with suitable boundary conditions on $\partial\Omega$, where $u(\omega,x)$ represents the hydraulic head, the forcing term $f(x)\in L^2(D)$ is deterministic, and the permeability coefficient $\e^{Y(\omega,x)}$ is modeled as a \emph{log-normal random field}, $Y(\omega,x)$ being a centered Gaussian random field with small standard deviation. The log-normal diffusion problem~\eqref{eq:darcy_strong} is widely used in geophysical applications (see, e.g.,~\cite{Bear2010,Dagan2012,Xia2019,Li2003} and the references there), and has been studied mathematically, e.g., in~\cite{Charrier2015,Bonizzoni2014,Charrier2013,Gittelson2010}.

Under suitable assumptions on the covariance of the random field $Y(\omega,x)$, it is possible to show that the Darcy problem is well-posed (see~\cite{Charrier2013}). 

Given complete statistical information on the Gaussian random field $Y(\omega,x)$, and assuming that each realization $Y(\omega,\cdot)$ is almost surely H\"older continuous with parameter $\gamma$, the aim of the present work is to construct an approximation for the expected value of the stochastic solution $\mean{u}$. To this end, we adopt a \emph{perturbation approach}, in which the stochastic solution $u$ is viewed as the map $u:\Holder{0,\gamma}{\bar D}\rightarrow H^1(D)$ which associates to each realization $Y(\omega,\cdot)\in \Holder{0,\gamma}{\bar D}$, the unique solution $u(\omega,\cdot)$ of~\eqref{eq:darcy}, and is expanded in Taylor series w.r.t. $Y$, i.e., $\sum_{k=0}^{+\infty}\frac{u^k(Y,x)}{k!}$, $u^k$ being the $k$-th Gateaux derivative of $u$ w.r.t. $Y$.
The expected value of $u$ is then approximated as 
$$
\mean{u}(x)\simeq\mean{T^Ku}(x)=\sum_{k=0}^K\frac{\mean{u^k}(x)}{k!},
$$
where $T^Ku(Y,x)$ denotes the $K$-th degree Taylor polynomial. We refer to $\mean{u^k}$ as the $k$-th order correction to the expected value of $u$, and to $\mean{T^K u}$ as the $K$-th degree approximation of the expected value of $u$. 

In~\cite{Bonizzoni2012,Bonizzoni2013,Bonizzoni2014} the authors show that, as $K$ goes to infinity, the $K$-th order approximation of the expected value of $u$ may actually diverge, for any positive value of the standard deviation $\sigma:=\sqrt{\frac{1}{\abs{D}}\int_D \mean{Y^2}(x)dx}$ of the random field $Y(\omega,x)$. Nevertheless, for $\sigma$ and $K$ small enough, $\mean{T^K u}$ provides a good approximation of $\mean{u}$. The work~\cite{Bonizzoni2014} also provides an estimate of the optimal degree of the Taylor polynomial achieving minimal error, for any given $\sigma>0$.

If a finite-dimensional approximation of the random field $Y(\omega,x)$ via $N$ random variables is available (e.g., by using the Karhunen-Lo\'eve (KL) expansion), then the (multi-variate) Taylor polynomial can be explicitly computed (see, e.g., the geophysical literature~\cite{Zhang2004,Lu2004,Lu2007,Liu2007}). However, this approach entails the computation of $\bc{N+K}{K}$ derivates. To alleviate the curse of dimensionality, adaptive algorithms have been proposed in~\cite{Chkifa2013,Cohen2015} for the case of uniform random variables.

In the present paper we consider the entire field $Y(\omega,x)$, and not a finite dimensional approximation of it, hence the Taylor polynomial can not be directly computed. Following~\cite{Guadagnini1999,Guadagnini1999a,Todor2005}, we adopt the \emph{moment equations} approach, that is, we solve the deterministic equations satisfied by $\mean{u^k}$, for $k\geq 0$. 

In~\cite{Bonizzoni2016} the authors derive analytically the recursive problem solved by $\mean{u^k}$, which requires the recursive computation of the $(i+1)$-points correlations \\$\mean{u^{k-i}\otimes Y^{\otimes i}}$, with $i=k, k-1,\ldots,1$. These functions being high dimensional, a full tensor product finite element discretization is impractical and suffer the curse of dimensionality. To overcome this issue, in~\cite{Bonizzoni2016} the authors have proposed a low rank approximation of the fully (tensor product) discrete problem, using the Tensor Train format. The effectiveness of the method is shown with both one and two-dimensional numerical examples.

The present paper complements the above-mentioned results. The main achievement consists in the well-posedness and regularity results for the recursive first moment equation. These results are developed in the framework of $p$-integrable Lebesgue spaces. In particular, the key tool consists in showing that the diagonal trace of functions in the $L^p(D)$ space-valued mixed $\gamma$-H\"older space, belongs to $L^p(D)$, whenever $p>\frac{2d}{\gamma}$. 
We also address the discretization of the moment equations. Differently from~\cite{Bonizzoni2016}, to alleviate the curse of dimensionality we propose here a sparse approximation method based on the Smolyak construction, which is more amenable to error analysis. We present then a complete convergence analysis of the proposed discretization method.

The paper is organized as follows: in Section~\ref{sec:analytical_derivation_Eu} we recall the recursion solved by the $k$-th order correction $\mean{u^k}$, under the assumption that every quantity is well-defined, and every problem is well-posed. In Section~\ref{sec:Preliminaries}, we first introduce the Banach space-valued maps with mixed H\"older regularity, and then study the H\"older regularity of the diagonal trace of Sobolev space-valued mixed H\"older maps. These technical results will be needed in Section~\ref{sec:Recursive equation for Eu} to study the well-posedness and regularity of the recursion for $\mean{u^k}$. Section~\ref{sec:Sparse discretization} is dedicated to the sparse discretization of the recursion and its error analysis. Finally, we draw some conclusions in Section~\ref{sec:conclusions}.

\section{Analytical derivation of the first moment equation}
\label{sec:analytical_derivation_Eu} 

The weak formulation of the Darcy PDE~\eqref{eq:darcy_strong} endowed with homogeneous Dirichlet boundary conditions reads:
\begin{equation}
\label{eq:darcy}
\int_D e^{Y(\omega,x)} \Grad u(\omega,x) \cdot\Grad v(x) dx
	= \int_D f(x) v(x) dx,
	\quad\forall v\in H^1_{0}(D), \textmd{ a.s. in }\Omega.
\end{equation}
We assume here that the random field $Y\in L^s(\Omega,\Holder{0,\gamma}{\bar D})$ ($0<\gamma<1/2$) for all $1\leq s<+\infty$. Then, for any $f\in L^p(D)$, $1<p<+\infty$, the boundary value problem~\eqref{eq:darcy} admits a unique solution $u\in\Lspace{p}{H^1(D)}$, which depends continuously on the data (see~\cite{Charrier2013}). In particular, the H\"older regularity assumption $Y\in L^s(\Omega,\Holder{0,\gamma}{\bar D})$ ($0<\gamma<1/2$) for all $1\leq s<+\infty$, is fulfilled of the covariance function $Cov_Y\in\Holder{0,t}{\overline{D\times D}}$ for some $2\gamma<t\leq 1$ (see~\cite{Bonizzoni2013,Bonizzoni2014}).
The mentioned well-posedness result extends to the case of
uniform/non-uniform Neumann as well as mixed Dirichlet-Neumann boundary conditions. In particular, the limit situation of Neumann boundary conditions on $\partial D$ leads to the uniqueness of the solution $u(\omega,x)$ up to a constant. For clarity of presentation, in this work we restrict to the case of homogeneous Dirichlet boundary conditions in the rest of the paper.

In this section we recall (see~\cite{Bonizzoni2013,Bonizzoni2016}) the structure of the problem solved by $\mean{u^k}$ - the $k$-th order correction of the expected value of $u$ - assuming that every quantity is well-defined and every problem is well-posed. We will detail these theoretical aspects in the next sections.
 
Let $D\subset\R^d$, be such that $\partial D\in C^1$.
Let $p,q$ be real numbers such that $1<p,q<\infty$, with $\frac{1}{p}+\frac{1}{q}$=1 and $p>\frac{2d}{\gamma}$, where $\gamma$ is the H\"older regularity of the random field $Y$. 
The requirement $p>\frac{2d}{\gamma}$ will be clarified later (see Proposition~\ref{prop:traccia_Wp}).
Given $f\in L^p(D)$, $1<p<+\infty$, we define the linear form $\mathcal F\in (W^{1,q}_0(D))^\star$ as
\begin{equation*}
    \mathcal F(v):=\int_D f v\ dx,\quad\forall\, v\in W^{1,q}_0(D).
\end{equation*}
The correction of order 0, $u^0$, is deterministic and is the \emph{unique weak solution} of the following problem: find $u^0\in W^{1,p}_0(D)$ such that 
\begin{equation}
    \label{eq:u0}
    \int_D\nabla u^0\cdot\nabla v\ dx=\mathcal F(v) \quad\forall v\in W^{1,q}_0(D),
\end{equation} 
where $\frac{1}{p}+\frac{1}{q}=1$. Moreover, it exists $C=C(D)>0$ such that
\begin{equation}
    \label{eq:u0_bound}
    \norm{u^0}{W^{1,p}(D)}\leq C \norm{f}{L^p(D)},
\end{equation}
We refer to~\cite[Chapter 7]{Simader1972} for the proof of existence and uniqueness of weak solutions for the Laplace-Dirichlet problem in $W^{1,p}$ spaces.

For $k\geq 1$, the $k$-th order correction $\mean{u^k}$ satisfies the following problem:
\begin{eqnarray}
	\label{eq:uk}
	\nonumber
	&\mathbf{k}\textmd{\bf-th order correction BVP}&\\
	&\boxed{
	\begin{array}{l}
		\displaystyle
		\int_D\nabla\mean{u^k}\cdot\nabla v\ dx
		=-\sum_{j=1}^k\bc{k}{j}\int_D\mean{\nabla u^{k-j}Y^j}\cdot \nabla v\ dx 
		\quad\forall v\in W^{1,q}_0(D).
	\end{array}}&
\end{eqnarray} 
Equation \eqref{eq:uk} is obtained in two steps: (i) derive the problem satisfied by $u^k$, by taking derivatives with respect to $Y$ of the stochastic equation~\eqref{eq:darcy} (see \cite{Bonizzoni2014} and the references therein); (ii) apply the expected value to both sides of the obtained equation.

The function $\mean{\Grad u^{k-i}Y^i}$ appearing in the r.h.s. of~\eqref{eq:uk} is the diagonal of the $(i+1)$-points correlation function $\mean{\Grad u^{k-i}\otimes Y^{\otimes i}}$, where $\otimes$ denotes the tensor product.
In particular, it holds
$$
\mean{u^{k-i}Y^i}(x):=\left(\Trdiaggen{1}{i+1}\mean{u^{k-i}\otimes Y^{\otimes i}}\right)(x)
=\mean{u^{k-i}\otimes Y^{\otimes i}}
	(\underbrace{x,\ldots,x}_{(i+1)-\textmd{times}}),
$$
where 
\begin{itemize}
	\item $\mathrm{Tr}$ is the diagonal trace operator (it will be formally defined in Definition~\ref{def:trace});
	\item $\mean{u^{k-i}\otimes Y^{\otimes i}}(x,y_1,\ldots,y_i)$ is the $(i+1)$-point correlation function defined as
\begin{equation*}
	\mean{u^{k-i}\otimes Y^{\otimes i}}(x,y_1,\ldots,y_i):=
	\int_{\Omega}
		u^{k-i}(\omega,x)\otimes Y(\omega,y_1)\otimes\cdots 
		\otimes Y(\omega,y_i)\text{d}\mathbb P(\omega).
\end{equation*}
\end{itemize}
In the same way, 
$$
\mean{\Grad u^{k-i}Y^i}(x):=\left(\Trdiaggen{1}{i+1}\mean{\Grad u^{k-i}\otimes Y^{\otimes i}}\right)(x)
=\mean{\Grad u^{k-i}\otimes Y^{\otimes i}}
	(\underbrace{x,\ldots,x}_{(i+1)-\textmd{times}}),
$$
where $\mean{\Grad u^{k-i}\otimes Y^{\otimes i}}=\Grad\otimes \Id^{\otimes i}\mean{u^{k-i}\otimes Y^{\otimes i}}$, that is, the linear operator $\Grad\otimes \Id^{\otimes i}$ applies the gradient operator to the first variable $x$ and the identity operator to all other variables $y_j$ for $j=1,\ldots,i$. 

The correlation functions themselves satisfy the following recursion:

\newpage
\begin{center}
 \bf Recursion on the correlations
\end{center}
\begin{equation} 
\framebox{\begin{minipage}{0.99\textwidth}
Given all lower order terms $\mean{u^{k-i-j}\otimes Y^{\otimes (i+j)}}$
for $j=1,\ldots,k-i$,
find $\mean{u^{k-i}\otimes Y^{\otimes i}}$ s.t.
\begin{eqnarray*}
&& \int_{D} 
    \left(\nabla\otimes\Id^{\otimes i}\right)
    \mean{u^{k-i}\otimes Y^{\otimes i}}(x,y_1,\ldots,y_i)
    \cdot
    \nabla v(x)\ \ddx \\
&& =-\sum_{j=1}^{k-i}\bc{k-i}{j}
    \hspace{-0.2cm}\ds{\int_{D} 
    \Trdiaggen{1}{j+1}\mean{\Grad u^{k-i-j} \otimes Y^{\otimes (i+j)}}(x,y_1,\ldots,y_i)
    \cdot
    \nabla v(x)\ \ddx,}
\end{eqnarray*}
$\forall v\in W^{1,q}_0(D)$, for all $y_1,\ldots,y_i\in D$.
\end{minipage}}
 \label{eq:recursion}
\end{equation}

Note that problem~\eqref{eq:uk} is a particular case of~\eqref{eq:recursion} for $i=0$, since $\mean{u^{k-0}\otimes Y^{\otimes 0}}=\mean{u^k}$.
Moreover, observe that $\mean{u^0\otimes Y^{\otimes k}}=u^0\otimes \mean{Y^{\otimes k}}$, since $u^0$ is deterministic, and it is fully characterized by the mean solution $u^0$ and the covariance structure of $Y$, which is an input of the problem.

The computation of the $k$-th order correction of the expected value of $u$ relies on the \emph{recursive} solution of BVPs of the type~\eqref{eq:recursion}, as summarized in Algorithm~\ref{al:Euk}.

\begin{algorithm}[h]
\caption{Computation of the $k$-th order correction $\mean{u^k}$}
\label{al:Euk}
    \begin{algorithmic}[1]
        \FOR{$k=0,\ldots,K$}
        \STATE Compute $u^0\otimes \mean{Y^{\otimes k}}$.
        \FOR{$i=k-1,k-2,\ldots,0$}
        \STATE Compute the $(i+1)$-point correlation function $\mean{u^{k-i}\otimes Y^{\otimes i}}$ (equation~\eqref{eq:recursion}).
        \ENDFOR 
        \STATE The result for $i=0$ is the $k$-th order correction $\mean{u^k}$ to the mean $\mean{u}$.
        \ENDFOR
    \end{algorithmic}
\end{algorithm}

\begin{table}[ht]
	\caption{$K$-th order approximation of the mean. The first column contains the input terms $\mean{u^0\otimes Y^{\otimes k}}$ and the first row contains the $k$-th order corrections $\mean{u^k}$, for $k=0,\ldots,K$. To compute $\mean{T^Ku(Y,x)}$, we need all the elements in the top left triangular part, that is, all elements in the $k$-th diagonal, for $k=0,\ldots,K$.}
	\begin{center}
	\begin{tabular}{c|c|c|c|c|c}
		$u^0$&$0$&$\mean{u^2}$&$0$&$\mean{u^4}$&$0$\\
		\hline
		$0$&$\mean{u^1\otimes Y}$&$0$&$\mean{u^3\otimes Y}$&$0$&$\iddots$\\
		\hline
		$u^0\otimes \mean{Y^{\otimes 2}}$&$0$&$\mean{u^2\otimes Y^{\otimes 2}}$&$0$&$\iddots$&$0$\\
		\hline
		$0$&$\mean{u^1\otimes Y^{\otimes 3}}$&$0$&$\iddots$&$0$&$\iddots$\\
		\hline
		$u^0\otimes \mean{Y^{\otimes 4}}$&$0$&$\iddots$&$0$&$\iddots$&$0$\\  
	\end{tabular}
	\end{center}
	\label{ricorsione_corr:table}
\end{table} 
Table~\ref{ricorsione_corr:table} illustrates the computational flow of the presented algorithm. Each non-zero correlation $\mean{u^{k-i}\otimes Y^{\otimes i}}$, with $i<k$, can be obtained only when all the preceding terms in the $k$-th diagonal have been already computed.
As a consequence, to derive the $K$-th order approximation $\mean{T^Ku}$, it is necessary to compute all the elements in the top left triangular part of the table.
Notice that, since we assumed $\mean{Y}(x)=0$, all the $(2k+1)$-point correlations of $Y$ vanish, and all odd diagonals are zero.

\section{Banach space-valued mixed H\"older maps, and trace results}
\label{sec:Preliminaries} 

Within this section, we introduce the notion of $V$-valued H\"older mixed regular map, $V$ being a Banach space, and we study the regularity of the diagonal trace of $V$-valued H\"older mixed regular maps.

\subsection{Banach space-valued mixed H\"older spaces}

\begin{definition}[Banach space-valued H\"older space]
\label{def:Banach-valued Holder space}
Let $V$ be a Banach space, $0<\gamma\leq 1$ be real, and $k\geq 1$ integer. The $V$-valued H\"older space with exponent $\gamma$, $\Cspace{0,\gamma}{\bar D^{\times k}}{V}$, consists of all continuous maps $\varphi=\varphi(x,y_1,\ldots,y_k):\bar D^{\times k}\rightarrow V$ with H\"older $\gamma$-regularity with respect to the variable $\vet y=(y_1,\ldots,y_k)$.
It is a Banach space with the norm
\begin{equation*}
	\norm{\varphi}{\Cspace{0,\gamma}{\bar D^{\times k}}{V}}
	:=\max\left\{
	\norm{\varphi}{\Cspace{0}{\bar D^{\times k}}{V}},
	\seminorm{\varphi}{\Cspace{0,\gamma}{\bar D^{\times k}}{V}}
	\right\}
\end{equation*}
with
\begin{equation*}
	\norm{\varphi}{\Cspace{0}{\bar D^{\times k}}{V}}
	:=\sup_{\vet y\in\bar D^{\times k}}
		\norm{\varphi(\cdot,\vet y)}{V}
\end{equation*}
and
\begin{equation*}
	\seminorm{\varphi}{\Cspace{0,\gamma}{\bar D^{\times k}}{V}}
		:= \sup_{\substack{\vet y\in\bar D^{\times k},
			\vet h\neq \vet{0} \\
			\textmd{s.t. }\vet{y+h} \in\bar D^{\times k}}}
	\frac{\norm{\varphi(\cdot,\vet y+\vet h)-\varphi(\cdot,\vet y)}{V}}
		{\norm{\vet h}{}^\gamma},
\end{equation*}
where
$$\vet h:=(h_1,\ldots,h_k)=(h_{1,1},\ldots,h_{1,d};h_{2,1},\ldots,h_{2,d};\ldots;h_{k,1},\ldots,h_{k,d})\in\R^{kd},$$ 
that is $h_j$ is a vector of $d$ components for each $j=1,\ldots,k$,
and $\norm{\cdot}{}$ denotes the Euclidean norm. 
In the following, the space $\Cspace{0,\gamma}{\bar D^{\times k}}{V}$ will be also denoted as $\Cspacepedex{0,\gamma}{y_1,\ldots,y_k}{\bar D^{\times k}}{V_x}$ or $\Cspacepedex{0,\gamma}{\vet y}{\bar D^{\times k}}{V_x}$.
\end{definition}

\begin{definition}
\label{def:holder_increment}
Let $h_j\neq 0$. The one-dimensional difference quotient $D^\gamma_{j,h_j}$ along the direction $j$ and with exponent $0<\gamma\leq 1$ of the function $v:\bar D^{\times k}\rightarrow \R$ is defined as
\begin{equation}
	\label{eq:one_dim_D}
	D^\gamma_{j,h_j} v(y_1,\ldots,y_k)
	:=\frac{v(y_1,\ldots,y_{j}+h_{j},\ldots,y_k)-v(y_1,\ldots,y_k)}
	{\norm{h_{j}}{}^\gamma}.
\end{equation}
\end{definition}

\begin{definition}
\label{def:mixed_holder_increment}
Given $\vet h=(h_1,\ldots,h_k)\in\R^{kd}$, we introduce $\vet i=\vet i(\vet h)$ as the vector containing the (non repeated) indices corresponding to the non-zero entries $h_j$ of $\vet h$, and $\vet i(\vet h)^c=\{1,\ldots,k\}\setminus\vet i(\vet h)$ (i.e., $h_j\neq (0,\ldots,0)$ for all $j\in\vet i(\vet  h)$, and $h_j=(0,\ldots,0)$ for all $j\in\vet i(\vet  h)^c$). 
The mixed difference quotient $D^{\gamma,mix}_{\vet i,\vet h}$ is defined as
\begin{equation}
	\label{eqLmixed_D}
	D^{\gamma,mix}_{\vet i,\vet h} := 
	\prod_{j=1}^{\norm{\vet h}{0}} D^\gamma_{i_j,h_{i_j}}
\end{equation}
where $\norm{\vet h}{0}:=\# \vet i(\vet h)$.
\end{definition}

In the following, when no confusion arises, we will denote the one-dimensional difference quotient also as $D^\gamma_{j}$, and the mixed different quotient as $D^{\gamma,mix}_{\vet i}$, omitting to specify the increment $\vet h$.

\begin{definition}[Banach space-valued mixed H\"older space]
\label{def:Banach-valued mixed Holder space}	
Let $V$ be a Banach space, $0<\gamma\leq 1$ be real, and $k\geq 1$ integer. The $V$-valued mixed H\"older space with exponent $\gamma$, $\Cspacemix{0,\gamma}{\bar D^{\times k}}{V}$, consists of all continuous maps $\varphi=\varphi(x,y_1,\ldots,y_k):\bar D^{\times k}\rightarrow V$ with H\"older $\gamma$-regularity in each variable $y_j$, $j=1,\ldots,k$, separately.
It is a Banach space with the norm
\begin{equation}
\label{eq:norm_mixed_holder}
	\norm{\varphi}{\Cspace{0,\gamma,mix}{\bar D^{\times k}}{V}}
	:=\max\left\{
	\norm{\varphi}{\Cspace{0}{\bar D^{\times k}}{V}},
	\seminorm{\varphi}{\Cspace{0,\gamma,mix}{\bar D^{\times k}}{V}}
	\right\}
\end{equation}
where $\norm{\cdot}{\Cspace{0}{\bar D^{\times k}}{V}}$ is as in Definition~\ref{def:Banach-valued Holder space}, and 
\begin{equation}
\label{eq:seminorm_mixed_holder}
\seminorm{\varphi}{\Cspace{0,\gamma,mix}{\bar D^{\times k}}{V}}
:=\max_{j=1,\ldots,k}
	\sup_{\substack{\vet y\in \bar D^{\times k},\ \vet{h}\neq \vet{0},\\
	\textmd{s.t. } \norm{\vet h}{0}=j\\
	\textmd{and }\vet{y+h}\in \bar D^{\times k}}}
	\norm{ D^{\gamma,mix}_{\vet i} \varphi(\cdot,\vet y)}{V},
\end{equation}
$D^{\gamma,mix}_{\vet i}$ being introduced in Definition~\ref{def:mixed_holder_increment}.
In the following, the space $\Cspacemix{0,\gamma}{\bar D^{\times k}}{V}$ will be also denoted as $\Cspacemixpedex{0,\gamma}{y_1,\ldots,y_k}{\bar D^{\times k}}{V_x}$ or $\Cspacemixpedex{0,\gamma}{\vet y}{\bar D^{\times k}}{V_x}$. 
\end{definition}

\subsubsection{Banach space-valued H\"older spaces with higher regularity}

Let $V$, $k$ and $\gamma$ as in Definition~\ref{def:Banach-valued Holder space}, and let $n\geq 1$ integer. Moreover, given a vector, denote as $\abs{\cdot}$ its $\ell_1$-norm. We define
\begin{equation}
    \label{eq:Cn_gamma_V}
    \Cspacepedex{n,\gamma}{\vet y}{\bar D^{\times k}}{V_x} = \left\{
    \begin{array}{c}
        \varphi\in\Cspacepedex{n}{\vet y}{\bar D^{\times k}}{V_x} \text{ s.t. } 		\forall\,\bm{\alpha}=(\alpha_1,\ldots,\alpha_k)\in\N^{kd}\\ 
		\text{ with } \abs{\bm\alpha}=\abs{\alpha_1}+\cdots+\abs{\alpha_k}\leq n\\
        \der{\bm\alpha} \varphi = \der{\alpha_1}_{y_1} \cdots \der{\alpha_k}_{y_k} \varphi
        \in\Cspacepedex{0,\gamma}{\vet y}{\bar D^{\times k}}{V_x}
    \end{array}\right\}.
\end{equation}

The space $\Cspacepedex{n,\gamma}{\vet y}{\bar D^{\times k}}{V_x}$ is a Banach space with seminorm 
\begin{equation}
    \label{eq:holder_n_V_seminorm}
    \seminorm{\varphi}{\Cspacepedex{n,\gamma}{\vet y}{\bar D^{\times k}}{V_x}}
    := \max_{\abs{\bm\alpha}= n}
    \norm{\der{\bm\alpha} \varphi(\cdot,\vet y) }{\Cspacepedex{0,\gamma}{\vet y}{\bar D^{\times k}}{V_x}}
\end{equation}
and norm
\begin{equation}
    \label{eq:holder_n_V_norm}
    \norm{\varphi}{\Cspacepedex{n,\gamma}{\vet y}{\bar D^{\times k}}{V_x}}
    := \max\left\{
	\norm{\varphi}{\Cspacepedex{n}{\vet y}{\bar D^{\times k}}{V_x}},
	\seminorm{\varphi}{\Cspacepedex{n,\gamma}{\vet y}{\bar D^{\times k}}{V_x}}
	\right\}.
\end{equation}

Moreover, we introduce the space
\begin{equation}
    \label{eq:mixed_Cn_V}
    \Cspacemixpedex{n}{\vet y}{\bar D^{\times k}}{V_x} = \left\{
    \begin{array}{c}
        \varphi:D^{\times k}\rightarrow V \text{ s.t. } 		\forall\,\bm{\alpha}=(\alpha_1,\ldots,\alpha_k)\in\N^{kd} \text{ with }\\
		\alpha_\ell=(\alpha_{\ell,1},\ldots,\alpha_{\ell,d})\in\N^d\ \text{and }
        0\leq \abs{\alpha_{\ell}}\leq n\ \forall\, \ell,\\
        \der{\bm\alpha} \varphi = \der{\alpha_1}_{y_1} \cdots \der{\alpha_k}_{y_k} \varphi
        \in\Cspacepedex{0}{\vet y}{\bar D^{\times k}}{V_x}
    \end{array}\right\},
\end{equation}
which is a Banach space with the norm
\begin{equation}
    \label{eq:mixed_Cn_V_norm}
    \norm{\varphi}{\Cspacemixpedex{n}{\vet y}{\bar D^{\times k}}{V_x}}
	:=\max_{\substack{(\bm\alpha_1,\ldots,\bm\alpha_k)\in\N^{kd}\\
        0\leq\abs{\bm\alpha_\ell}\leq n}} 
		\norm{\der{\bm\alpha}\varphi}{\Cspacepedex{0}{\vet y}{\bar D^{\times k}}{V_x}}.
\end{equation}

Finally, generalizing Definition~\ref{def:Banach-valued mixed Holder space}, we introduce the space $\Cspacemixpedex{n,\gamma}{\vet y}{\bar D^{\times k}}{V_x}$ as follows:
\begin{equation}
    \label{eq:mixed_holder_n_V}
    \Cspacemixpedex{n,\gamma}{\vet y}{\bar D^{\times k}}{V_x} = \left\{
    \varphi\in\Cspacemixpedex{n}{\vet y}{\bar D^{\times k}}{V_x} \text{ s.t. }
    \seminorm{\varphi}{\Cspacemixpedex{n,\gamma}{\vet y}{\bar D^{\times k}}{V_x}}<+\infty
    \right\},
\end{equation}
where 
\begin{equation}
    \label{eq:mixed_holder_n_V_seminorm}
    \seminorm{\varphi}{\Cspacemixpedex{n,\gamma}{\vet y}{\bar D^{\times k}}{V_x}}
    := \max_{j=1,\ldots, k}
    \sup_{\substack{\vet y\in \bar D^{\times k},\ \vet{h}\neq 0,\\
	\textmd{s.t. } \norm{\vet h}{0}=j\\
	\textmd{and }\vet{y+h}\in \bar D^{\times k}}} 	\max_{\substack{\bm\alpha=(\alpha_1,\ldots,\alpha_k)\in\N^{kd}\\
	0\leq\abs{\alpha_\ell}\leq n,\ \forall\, \ell=1,\ldots,k\\
    \abs{\bm{\alpha}_{\ell}}=n,\ \ell\in\vet i(\vet h)}}
    \norm{D^{\gamma,mix}_{\vet i,\vet h}
    \der{\bm\alpha} \varphi(\cdot,\vet y) }{V_x}. 
\end{equation}
It is a Banach space with the norm
\begin{equation}
    \label{eq:mixed_holder_n_V_norm}
    \norm{\varphi}{\Cspacemixpedex{n,\gamma}{\vet y}{\bar D^{\times k}}{V_x}}
    :=\max\left\{
	\norm{\varphi}{\Cspacemixpedex{n}{\vet y}{\bar D^{\times k}}{V_x}},
    \seminorm{\varphi}{\Cspacemixpedex{n,\gamma}{\vet y}{\bar D^{\times k}}{V_x}}\right\}.	
\end{equation}

\subsubsection{Properties of Banach space-valued mixed H\"older spaces}

The following proposition states the relation between H\"older spaces and mixed H\"older spaces.

\begin{proposition}
\label{prop:holder_inclusion}
Let $V$ be a Banach space, and $0<\gamma\leq 1$. Then,
\begin{equation}
	\label{eq:holder_inclusion}
	\Cspacepedex{0,\gamma}{\vet y}{\bar D^{\times k}}{V_x}
	\subset 
	\Cspacemixpedex{0,\gamma/k}{\vet y}{\bar D^{\times k}}{V_x}	
\end{equation}
for all $k\geq 2$.
\end{proposition}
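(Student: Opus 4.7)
The plan is to bound, for any $\vet y \in \bar D^{\times k}$ and any $\vet h \in \R^{kd}$ with $\vet y + \vet h \in \bar D^{\times k}$ and $\|\vet h\|_0 = j \leq k$, the mixed difference quotient $D^{\gamma/k, mix}_{\vet i, \vet h}\varphi(\cdot, \vet y)$ uniformly in $\vet y$ and $\vet h$, where $\vet i = \vet i(\vet h)$. Writing the iterated difference explicitly, one obtains
\begin{equation*}
  D^{\gamma/k, mix}_{\vet i, \vet h}\varphi(\cdot, \vet y)
  = \frac{1}{\prod_{\ell \in \vet i}\|h_\ell\|^{\gamma/k}}
    \sum_{S \subseteq \vet i} (-1)^{j - |S|}\, \varphi\Bigl(\cdot,\, \vet y + \textstyle\sum_{\ell \in S} h_\ell \vet e_{(\ell)}\Bigr),
\end{equation*}
where $\vet e_{(\ell)}$ denotes the inclusion putting $h_\ell$ in the $\ell$-th block. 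So the task reduces to bounding the numerator $N(\vet y, \vet h) := \sum_S (-1)^{j-|S|}\varphi(\cdot, \vet y + \sum_{\ell \in S} h_\ell \vet e_{(\ell)})$ in $V$.

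The key observation is a pairing trick: for any fixed direction $\ell_0 \in \vet i$, partition the subsets $S \subseteq \vet i$ into the pairs $\{S', S' \cup \{\ell_0\}\}$ with $\ell_0 \notin S'$. In each pair, the two terms carry opposite signs and their points differ by exactly $h_{\ell_0}\vet e_{(\ell_0)}$, so the joint Hölder regularity gives
\begin{equation*}
  \|N(\vet y, \vet h)\|_V \leq 2^{j-1}\,\seminorm{\varphi}{\Cspacepedex{0,\gamma}{\vet y}{\bar D^{\times k}}{V_x}}\,\|h_{\ell_0}\|^\gamma.
\end{equation*}
Since this holds for every choice of $\ell_0 \in \vet i$, the quantity $\|N(\vet y, \vet h)\|_V$ is bounded by $j$ different expressions, and taking the geometric mean yields
\begin{equation*}
  \|N(\vet y, \vet h)\|_V \leq 2^{j-1}\,\seminorm{\varphi}{\Cspacepedex{0,\gamma}{\vet y}{\bar D^{\times k}}{V_x}} \prod_{\ell \in \vet i}\|h_\ell\|^{\gamma/j}.
\end{equation*}

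Dividing by $\prod_{\ell \in \vet i}\|h_\ell\|^{\gamma/k}$, one obtains
\begin{equation*}
  \|D^{\gamma/k, mix}_{\vet i, \vet h}\varphi(\cdot, \vet y)\|_V
  \leq 2^{j-1}\,\seminorm{\varphi}{\Cspacepedex{0,\gamma}{\vet y}{\bar D^{\times k}}{V_x}} \prod_{\ell \in \vet i}\|h_\ell\|^{\gamma\left(\frac{1}{j} - \frac{1}{k}\right)}.
\end{equation*}
Because $j \leq k$, every exponent $\gamma(1/j - 1/k)$ is nonnegative, and since $D$ is bounded one has $\|h_\ell\| \leq \mathrm{diam}(\bar D)$, so the right-hand side is uniformly bounded by a constant depending only on $k$, $\gamma$, $\mathrm{diam}(\bar D)$ and $\seminorm{\varphi}{\Cspacepedex{0,\gamma}{\vet y}{\bar D^{\times k}}{V_x}}$. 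Combined with the trivial inclusion of the $C^0$ parts in the norms, this shows the embedding~\eqref{eq:holder_inclusion} together with a continuity estimate.

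The main obstacle is essentially the bookkeeping for the signed sum $N(\vet y, \vet h)$ in the pairing step, but this is the standard telescoping identity for iterated finite differences; the geometric-mean interpolation then accounts for the loss of $1/k$ in the exponent and explains why the bounded diameter of $D$ is needed (otherwise only the diagonal case $j = k$ would come for free from the pairing bound).
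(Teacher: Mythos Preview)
Your argument is correct and follows essentially the same route as the paper: both proofs expand the iterated difference as a signed sum over the $2^{j}$ corners, pair the terms along a single direction $\ell_0$ to get $\|N\|_V \le 2^{j-1}\seminorm{\varphi}{\Cspacepedex{0,\gamma}{\vet y}{\bar D^{\times k}}{V_x}}\|h_{\ell_0}\|^\gamma$, and then absorb the remaining factors using the boundedness of $D$. The only cosmetic difference is that the paper picks $\ell_0$ to be the index of the smallest $\|h_\ell\|$ and uses $\min_\ell\|h_\ell\|^\gamma\le\prod_\ell\|h_\ell\|^{\gamma/j}$, whereas you obtain the same product directly via the geometric mean of the $j$ bounds; both lead to the same constant $2^{j-1}$ and the same diameter factor.
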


\begin{proof}
We first prove \eqref{eq:holder_inclusion} for $k=2$. 
Let $\varphi\in\Cspacepedex{0,\gamma}{y_1,y_2}{\overline{D\times D}}{V_x}$. Then,
\begin{align}
	\nonumber
	\seminorm{\varphi}{\Cspacemixpedex{0,\gamma/2}{y_1,y_2}{\overline{ D\times D} }{V_x}}
	& = \max\left\{
	\sup_{y_1,y_2,h_1} \norm{D^{\gamma/2}_1\varphi(\cdot,y_1,y_2)}{V_x},
	\sup_{y_1,y_2,h_2} \norm{D^{\gamma/2}_2\varphi(\cdot,y_1,y_2)}{V_x},\right.\\
	\label{eq:pezzi}
	&\quad\left.
	\sup_{y_1,y_2,h_1,h_2} \norm{D^{\gamma/2}_1D^{\gamma/2}_2\varphi(\cdot,y_1,y_2)}{V_x}\right\}.	
\end{align}
We bound the three terms in~\eqref{eq:pezzi} separately. Observe that
\begin{align}
	\nonumber
	\sup_{y_1,y_2,h_1} \norm{D^{\gamma/2}_1\varphi(\cdot,y_1,y_2)}{V_x}
	& = \sup_{y_1,y_2,h_1} 
		\frac{\norm{\varphi(\cdot,y_1+h_1,y_2)-\varphi(\cdot,y_1,y_2)}{V_x}}
		{\norm{h_1}{}^{\gamma/2}}\\
	\nonumber
	& = \sup_{y_1,y_2,h_1} \norm{h_1}{}^{\gamma/2}
		\frac{\norm{\varphi(\cdot,y_1+h_1,y_2)-\varphi(\cdot,y_1,y_2)}{V_x}}
		{\norm{h_1}{}^{\gamma}}\\
	\label{eq:parte1}
	& \leq \max\{1,diam(D)^{\gamma/2}\}
	\seminorm{\varphi}{\Cspacepedex{0,\gamma}{y_1,y_2}{\overline{D\times D}}{V_x}},
\end{align}
which is bounded by assumption, and the same holds for $\sup_{y_1,y_2,h_2} \norm{D^{\gamma/2}_2\varphi(\cdot,y_1,y_2)}{V_x}$. 
We focus now on the third term in~\eqref{eq:pezzi}.
Define
\begin{align*}
	& w(\cdot, y_1,y_2;h_1,h_2)
	:=D^{\gamma/2}_1D^{\gamma/2}_2\varphi(\cdot,y_1,y_2) 
	\norm{h_1}{}^{\gamma/2}\norm{h_2}{}^{\gamma/2}\\
	&\quad =\varphi(\cdot,y_1+h_1,y_2+h_2)-\varphi(\cdot,y_1+h_1,y_2)-\varphi(\cdot,y_1,y_2+h_2)-\varphi(\cdot,y_1,y_2).
\end{align*}
Hence, we have
\begin{align*}
	& \sup_{y_1,y_2,h_1,h_2} \norm{D^{\gamma/2}_1D^{\gamma/2}_2\varphi(\cdot,y_1,y_2)}{V_x}
	= \sup_{y_1,y_2,h_1,h_2}
		\frac{\norm{ w(\cdot,y_1,y_2;h_1,h_2) }{V_x}}
		{\norm{h_1}{}^{\gamma/2}\norm{h_2}{}^{\gamma/2}}\\
	& \quad \leq \max\left\{
		\sup_{\vet y,\norm{h_1}{}<\norm{h_2}{}} \frac{\norm{ w(\cdot,y_1,y_2;h_1,h_2) }{V_x}}
		{\norm{h_1}{}^{\gamma/2}\norm{h_2}{}^{\gamma/2}},
		\sup_{\vet y,\norm{h_1}{}\geq\norm{h_2}{}} \frac{\norm{ w(\cdot,y_1,y_2;h_1,h_2) }{V_x}}
		{\norm{h_1}{}^{\gamma/2}\norm{h_2}{}^{\gamma/2}}
	\right\}.
\end{align*}
We start considering 
\begin{align*}
	& \sup_{\vet y,\norm{h_1}{}<\norm{h_2}{}} 
		\frac{\norm{ w(\cdot,y_1,y_2;h_1,h_2) }{V_x}}
	{\norm{h_1}{}^{\gamma/2}\norm{h_2}{}^{\gamma/2}}\\
	& \quad \leq 
	\sup_{\vet y,\norm{h_1}{}<\norm{h_2}{}}
	\frac{1}{\norm{h_1}{}^{\gamma/2}\norm{h_2}{}^{\gamma/2}}
	\bigg(\norm{h_1}{}^\gamma
	\frac{\norm{\varphi(\cdot,y_1+h_1,y_2+h_2)-\varphi(\cdot,y_1,y_2+h_2)}{V_x}}{\norm{h_1}{}^\gamma}\\
	&\quad\quad + 
	\norm{h_1}{}^\gamma
	\frac{\norm{\varphi(\cdot,y_1+h_1,y_2)-\varphi(\cdot,y_1,y_2)}{V_x}}{\norm{h_1}{}^\gamma}\bigg)\\
	& \quad \leq
	\sup_{\vet{y},\norm{h_1}{}<\norm{h_2}{}}
	\frac{\norm{h_1}{}^{\gamma/2}}{\norm{h_2}{}^{\gamma/2}}
	\left(\norm{D^\gamma_{1}\varphi(\cdot,y_1,y_2+h_2)}{V_x}
	+\norm{D^{\gamma}_{1}\varphi(\cdot,y_1,y_2)}{V_x}\right)\\
	&\quad\leq 2 \norm{\varphi}{\Cspacepedex{0,\gamma}{y_1,y_2}{\overline{D\times D}}{V_x}}.
\end{align*}
The case $\norm{h_1}{}\geq\norm{h_2}{}$ is analogous.
Hence, we conclude \eqref{eq:holder_inclusion} for $k=2$.

In the general case, given $\vet h=(h_1,\ldots,h_k)$, let $\vet i(\vet h)$ as in Definition~\ref{def:mixed_holder_increment}, and 
$i^*\in\{\vet i(\vet h)\}$ such that $\norm{h_{i^*}}{}\leq \norm{h_{i_j}}{}$ for all $j$ such that $i^*\neq i_j$. Moreover, define $w(\cdot,\vet y;\vet h):=D^{\gamma,mix}_{\vet i} \varphi(\cdot,\vet y)\prod_{\ell=1}^j\norm{h_{i_\ell}}{}^{\gamma/j}$.
We bound each term of the seminorm~\eqref{eq:seminorm_mixed_holder} as follows:
\begin{align*}
	&\sup_{\vet y,\norm{h_{i^*}}{}\leq \norm{h_{i_j}}{}}
	\frac{\norm{w(\cdot,\vet y;\vet h)}{V_x} }
	{\prod_{\ell=1}^j\norm{h_{i_\ell}}{}^{\gamma/j}}
	\leq
	\sup_{\vet y,\norm{h_{i^*}}{}\leq \norm{h_{i_j}}{}}
	\frac{\norm{h_{i^*}}{}^\gamma }{\prod_{\ell=1}^j\norm{h_{i_\ell}}{}^{\gamma/j}}
	\frac{\norm{w(\cdot,\vet y;\vet h)}{V_x}}
	{\norm{h_{i^*}}{}^\gamma }\\
	& \quad \leq
	2^{j-1} \seminorm{\varphi}{\Cspacepedex{0,\gamma}{\vet y}{\bar D^{\times k}}{V_x}},
\end{align*}
and the inclusion~\eqref{eq:holder_inclusion} is then proved.
\end{proof}

\begin{proposition}
\label{prop:holder_mix}
The spaces $\Cspacepedex{0,\gamma}{y_2}{\bar D}{\Cspacepedex{0,\gamma}{y_1}{\bar D}{V_x}}$ and $\Cspacepedex{0,\gamma}{y_1}{\bar D}{\Cspacepedex{0,\gamma}{y_2}{\bar D}{V_x}}$ are isomorphic to the space $\Cspacemixpedex{0,\gamma}{y_1,y_2}{\overline{D\times D}}{V_x}$ for all $n\geq 0$ integer.
\end{proposition}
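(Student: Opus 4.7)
The plan is to show that, with the norms introduced in Definitions~\ref{def:Banach-valued Holder space} and~\ref{def:Banach-valued mixed Holder space}, the three spaces have literally the same norm (once one identifies a $V$-valued function of $y_2$ whose values are $V$-valued functions of $y_1$ with a function of $(y_1,y_2)$), and then to match the underlying continuity requirements. The exchange $y_1 \leftrightarrow y_2$ handles the second iterated space in the same way, so I concentrate on the first one.

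First I would unravel the iterated norm: applying Definition~\ref{def:Banach-valued Holder space} twice, for $\varphi$ in $\Cspacepedex{0,\gamma}{y_2}{\bar D}{\Cspacepedex{0,\gamma}{y_1}{\bar D}{V_x}}$ the norm is
\begin{equation*}
\max\left\{
\sup_{y_2\in\bar D} \norm{\varphi(\cdot,\cdot,y_2)}{\Cspacepedex{0,\gamma}{y_1}{\bar D}{V_x}},\
\sup_{\substack{y_2,\, h_2\neq 0\\ y_2+h_2\in\bar D}} \norm{D^\gamma_{2,h_2}\varphi(\cdot,\cdot,y_2)}{\Cspacepedex{0,\gamma}{y_1}{\bar D}{V_x}}
\right\},
\end{equation*}
which, after expanding the inner norms once more and using that $D^\gamma_{1,h_1}$ and $D^\gamma_{2,h_2}$ commute (they act on disjoint variables), coincides with
\begin{equation*}
\max\left\{
\sup \norm{\varphi}{V_x},\
\sup \norm{D^\gamma_{1,h_1}\varphi}{V_x},\
\sup \norm{D^\gamma_{2,h_2}\varphi}{V_x},\
\sup \norm{D^{\gamma,mix}_{(1,2),\vet h}\varphi}{V_x}
\right\}.
\end{equation*}
This is exactly the mixed H\"older norm~\eqref{eq:norm_mixed_holder} for $k=2$.

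Next I would check that the continuity requirements built into the two definitions match. Starting from $\varphi \in \Cspacemixpedex{0,\gamma}{y_1,y_2}{\overline{D\times D}}{V_x}$, the bound
\begin{equation*}
\norm{\varphi(\cdot,\cdot,y_2+h_2) - \varphi(\cdot,\cdot,y_2)}{\Cspacepedex{0,\gamma}{y_1}{\bar D}{V_x}} \leq C\,\seminorm{\varphi}{\Cspacemixpedex{0,\gamma}{y_1,y_2}{\overline{D\times D}}{V_x}}\,\norm{h_2}{}^\gamma,
\end{equation*}
obtained by controlling both the $V$-sup and the $y_1$-difference quotient of the $h_2$-translate via the commuting mixed quotient, shows that $y_2 \mapsto \varphi(\cdot,\cdot,y_2)$ is H\"older (hence continuous) into $\Cspacepedex{0,\gamma}{y_1}{\bar D}{V_x}$, and each slice is H\"older in $y_1$ by the same estimate. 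Conversely, starting from the iterated space, the H\"older estimates in $y_1$ and in $y_2$ combine to yield joint continuity of $(y_1,y_2)\mapsto \varphi(\cdot,y_1,y_2)$ into $V_x$, which is what Definition~\ref{def:Banach-valued mixed Holder space} requires. Together with Step~1 this gives the announced norm-preserving bijection, and hence the isomorphism.

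The genuinely non-trivial step is Step~2, specifically the estimate on $\norm{\varphi(\cdot,\cdot,y_2+h_2)-\varphi(\cdot,\cdot,y_2)}{\Cspacepedex{0,\gamma}{y_1}{\bar D}{V_x}}$: it requires controlling the $y_1$-difference quotient of the $y_2$-translate, which is precisely where the mixed H\"older regularity $D^\gamma_1 D^\gamma_2 \varphi$ enters, rather than the plain joint H\"older regularity (which would only yield an exponent $\gamma/2$, as Proposition~\ref{prop:holder_inclusion} made clear). Apart from this, everything reduces to bookkeeping of sup's and difference quotients, and the same scheme extends to higher order $n\geq 0$ by applying it to every partial derivative $\der{\bm\alpha}\varphi$ with $|\alpha_\ell|\leq n$.
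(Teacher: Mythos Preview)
Your proof is correct and follows essentially the same route as the paper: both arguments explicitly unfold the iterated norm and the mixed norm into the same four-term maximum of $\sup\norm{\varphi}{V_x}$, the two single-direction difference quotients, and the mixed quotient $D^\gamma_1 D^\gamma_2\varphi$. You add a small but worthwhile extra step (your Step~2) verifying that the underlying continuity requirements in the two definitions also match, which the paper's proof leaves implicit.
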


\begin{proof}
According to definition~\eqref{eq:mixed_holder_n_V_norm}, we have
\begin{align*}
    &\norm{\varphi}{\Cspacemixpedex{0,\gamma}{y_1,y_2}{\overline{D\times D}}{V_x}}
    =\max\left\{
	\norm{\varphi}{\Cspacemixpedex{0}{y_1,y_2}{\overline{D\times D}}{V_x}},
    \seminorm{\varphi}{\Cspacemixpedex{0,\gamma}{y_1,y_2}{\overline{D\times D}}{V_x}}
	\right\}\\
    & \quad = \max\Big\{
            \max_{y_1,y_2} \norm{\varphi(\cdot,y_1,y_2)}{V_x},
		\sup_{(y_1,y_2), h_1} \norm{D^\gamma_{1} \varphi(\cdot,y_1,y_2)}{V_x},\\
    & \hspace{2cm} 
        \sup_{(y_1,y_2), h_2} \norm{D^\gamma_{2} \varphi(\cdot,y_1,y_2)}{V_x},
        \sup_{(y_1,y_2),(h_1,h_2)} \norm{D^\gamma_{2}D^\gamma_{1}
			\varphi(\cdot,y_1,y_2)}{V_x}\Big\}.
\end{align*}  
On the other hand, we have
\begin{align*}
    &\norm{\varphi}{\Cspacepedex{0,\gamma}{y_2}{\bar D}{\Cspacepedex{0,\gamma}{y_1}{\bar D}{V_x}}}
    = \max\left\{
    \norm{\varphi}{\Cspacepedex{0}{y_2}{\bar D}{\Cspacepedex{0,\gamma}{y_1}{\bar D}{V_x}}},
    \seminorm{\varphi}{\Cspacepedex{0,\gamma}{y}{\bar D}{\Cspacepedex{0,\gamma}{y_1}{\bar D}{V_x}}}
    \right\},
\end{align*}
where
\begin{align*}
    &\norm{\varphi}{\Cspacepedex{0}{y_2}{\bar D}{\Cspacepedex{0,\gamma}{y_1}{\bar D}{V_x}}}
	= \max_{y_2} \norm{ \varphi(\cdot,\cdot,y_2)}{\Cspacepedex{0,\gamma}{y_1}{\bar D}{V_x}}\\
    & \quad
    = \max_{y_2} \max\left\{
		\norm{\varphi(\cdot,\cdot,y_2)}{\Cspacepedex{0}{y_1}{\bar D}{V_x}},
    	\seminorm{ \varphi(\cdot,\cdot,y_2)}{\Cspacepedex{0,\gamma}{y_1}{\bar D}{V_x}}\right\}\\
    & \quad
    = \max\left\{
        \max_{y_1,y_2}\norm{\varphi(\cdot,y_1,y_2)}{V_x},
        \max_{y_2}\sup_{y_1,h_1}
        \norm{D^\gamma_{1} \varphi(\cdot,y_1,y_2)}{V_x}
        \right\},
\end{align*}
and
\begin{align*}
    & \seminorm{\varphi}{\Cspacepedex{0}{y_2}{\bar D}{\Cspacepedex{0,\gamma}{y_1}{\bar D}{V_x}}}
    = \sup_{y_2,h_2} \norm{D^\gamma_{2} \varphi(\cdot,\cdot,y_2)}
			{\Cspacepedex{0,\gamma}{y_1}{\bar D}{V_x}}\\
    & \quad = \sup_{y_2,h_2}
    \max\left\{
    	\norm{D^\gamma_{2} \varphi(\cdot,\cdot,y_2)}
    		{\Cspacepedex{0}{y_1}{\bar D}{V_x}},
	    	\seminorm{D^\gamma_{2}\varphi(\cdot,\cdot,y_2)}
	    		{\Cspacepedex{0,\gamma}{y_1}{\bar D}{V_x}}\right\}\\
    & \quad =
    \max\left\{
    \max_{y_1} \sup_{y_2,h_2} 
        \norm{D^\gamma_{2} \varphi(\cdot,y_1,y_2)}{V_x},
    \sup_{y_2,h_2} \sup_{y_1,h_1}
        \norm{D^\gamma_{1} D^\gamma_{2} \varphi(\cdot,y_1,y_2)}{V_x}\right\}.
\end{align*}
Hence, we conclude that $\norm{\varphi}{\Cspacemixpedex{0,\gamma}{y_1,y_2}{\overline{D\times D}}{V_x}}=\norm{\varphi}{\Cspacepedex{0,\gamma}{y_2}{\bar D}{\Cspacepedex{0,\gamma}{y_1}{\bar D}{V_x}}}$. In the same way, it is possible to show that $\norm{\varphi}{\Cspacemixpedex{0,\gamma}{y_1,y_2}{\overline{D\times D}}{V_x}}=\norm{\varphi}{\Cspacepedex{0,\gamma}{y_1}{\bar D}{\Cspacepedex{0,\gamma}{y_2}{\bar D}{V_x}}}$.
\end{proof}

\begin{remark}
With small modifications to the proof, it is possible to prove that Proposition~\ref{prop:holder_inclusion} holds for H\"older spaces with higher regularity, yielding 
\begin{equation}
	\label{eq:holder_inclusion_n}
	\Cspacepedex{n,\gamma}{\vet y}{\bar D^{\times k}}{V_x}
	\subset 
	\Cspacemixpedex{n,\gamma/k}{\vet y}{\bar D^{\times k}}{V_x}	
\end{equation}
for all $k\geq 2$.
Moreover, Proposition~\ref{prop:holder_mix} generalizes to higher regularity and higher dimension, yielding 
\begin{equation}
	\label{eq:holder_mix_n}
	\Cspacemixpedex{n,\gamma}{\vet y}{\bar D^{\times k}}{V_x}
	\sim 
	\Cspacemixpedex{n,\gamma}{\vet y^\star}{\bar D^{\times(k-1)}}
		{\Cspacepedex{n,\gamma}{y_i}{\bar D}{V_x}}
	\quad\forall i=1,\ldots,k+1,
\end{equation}
where $\vet y^\star=(y_1,\ldots,y_{i-1},y_{i+1},\ldots,y_k)$.
\end{remark}

\begin{proposition}
	\label{prop:u_tp}
	Denote with $\Holdermix{0,\gamma}{\bar D^{\times k}}$ the space $\Cspacemix{0,\gamma}{\bar D^{\times k}}{\R}$. Then, it holds
	\begin{equation}
		\label{eq:u_tp}
		\norm{u}{\Holdermix{0,\gamma}{\bar D^{\times k}}}
		= \prod_{\ell=1}^k\norm{u_\ell}{\Holder{0,\gamma}{\bar D}},
	\end{equation}
	for all $u(y_1,\ldots,y_k):=u_1(y_1)\otimes\cdots\otimes u_k(y_k)\in \Holdermix{0,\gamma}{\bar D^{\times k}}$.
\end{proposition}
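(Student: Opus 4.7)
The plan is to exploit the tensor product structure: for $u = u_1 \otimes \cdots \otimes u_k$, each one-dimensional difference quotient $D^\gamma_{j,h_j}$ acts only on the $j$-th factor and leaves the others untouched, so that
\[
    D^\gamma_{j,h_j} u(y_1,\ldots,y_k) = u_1(y_1)\cdots \big(D^\gamma_{j,h_j} u_j(y_j)\big)\cdots u_k(y_k).
\]
Iterating this for a mixed index $\vet i$ with $\|\vet h\|_0 = j$, the mixed difference quotient factorizes as
\[
    D^{\gamma,mix}_{\vet i} u(\vet y)
    = \prod_{\ell\in \vet i(\vet h)} \bigl(D^\gamma_{\ell,h_\ell}u_\ell(y_\ell)\bigr)\,
      \prod_{\ell\in \vet i(\vet h)^c} u_\ell(y_\ell).
\]
The first step of the proof is to record this identity explicitly.

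Next I would evaluate the two contributions to the norm~\eqref{eq:norm_mixed_holder}. For the $\mathcal{C}^0$ part, $\|u\|_{\mathcal{C}^0(\bar D^{\times k};\R)} = \prod_\ell \|u_\ell\|_{\mathcal{C}^0(\bar D)}$ since the sup of a product of functions of independent variables factorizes. For the seminorm part, taking the supremum in~\eqref{eq:seminorm_mixed_holder} of the factorized expression above separates variables: for $\ell \in \vet i(\vet h)$ one gets $\sup_{y_\ell,h_\ell} |D^\gamma_{\ell,h_\ell} u_\ell(y_\ell)| = |u_\ell|_{\Holder{0,\gamma}{\bar D}}$, while for $\ell \notin \vet i(\vet h)$ one gets $\sup_{y_\ell} |u_\ell(y_\ell)| = \|u_\ell\|_{\mathcal{C}^0(\bar D)}$. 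Hence, for each fixed subset $\vet i \subseteq \{1,\ldots,k\}$,
\[
    \sup_{\vet y,\vet h:\vet i(\vet h)=\vet i} \|D^{\gamma,mix}_{\vet i} u(\vet y)\|
    = \prod_{\ell\in\vet i} |u_\ell|_{\Holder{0,\gamma}{\bar D}}
      \prod_{\ell\notin\vet i} \|u_\ell\|_{\mathcal{C}^0(\bar D)}.
\]

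The final step is to take the outer maxima. Combining the $\mathcal{C}^0$ contribution (corresponding to $\vet i = \emptyset$) with the seminorm contributions (ranging over all nonempty $\vet i$) gives
\[
    \norm{u}{\Holdermix{0,\gamma}{\bar D^{\times k}}}
    = \max_{\vet i\subseteq\{1,\ldots,k\}}
      \prod_{\ell\in\vet i} |u_\ell|_{\Holder{0,\gamma}{\bar D}}
      \prod_{\ell\notin\vet i} \|u_\ell\|_{\mathcal{C}^0(\bar D)}.
\]
Applying the elementary identity $\max\{a c,\,b c\} = c\max\{a,b\}$ for $c\geq 0$ factor by factor (i.e., $\max_{\vet i}\prod_\ell \bigl(\mathbf 1_{\ell\in\vet i}|u_\ell|_{\Holder{0,\gamma}{\bar D}} + \mathbf 1_{\ell\notin\vet i}\|u_\ell\|_{\mathcal{C}^0}\bigr) = \prod_\ell \max\{\|u_\ell\|_{\mathcal{C}^0},|u_\ell|_{\Holder{0,\gamma}{\bar D}}\}$) yields
\[
    \norm{u}{\Holdermix{0,\gamma}{\bar D^{\times k}}}
    = \prod_{\ell=1}^k \max\{\|u_\ell\|_{\mathcal{C}^0(\bar D)}, |u_\ell|_{\Holder{0,\gamma}{\bar D}}\}
    = \prod_{\ell=1}^k \norm{u_\ell}{\Holder{0,\gamma}{\bar D}},
\]
which is the claim.

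There is no real obstacle: the argument is completely elementary, the only point worth emphasizing is the distributive identity between maxima and products in the last step, which is why the mixed norm exactly picks out the product of the one-variable Hölder norms rather than, say, the sum over subsets. Alternatively, the statement also follows immediately from the iterated isomorphism~\eqref{eq:holder_mix_n}, since the Hölder norm of a tensor-product map $u_2(y_2)\cdot w(y_1,\ldots,y_{k-1})$ valued in a Banach space clearly factorizes as $\|u_2\|_{\Holder{0,\gamma}{\bar D}}\|w\|$; one then proceeds by induction on $k$.
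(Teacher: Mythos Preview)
Your argument is correct and follows the same strategy as the paper's up to the last step: both factorize the mixed difference quotient of the tensor product and evaluate the $\mathcal{C}^0$-part and the seminorm-part separately as products over $\ell$. The difference lies in how the final maximum is assembled. The paper bounds the seminorm from above by $\prod_\ell\norm{u_\ell}{\Holder{0,\gamma}{\bar D}}$ (which is fine) and from below only by the single choice $\vet i=\{1,\ldots,k\}$, obtaining $\prod_\ell\seminorm{u_\ell}{\Holder{0,\gamma}{\bar D}}$; it then asserts
\[
\max\Bigl\{\prod_\ell\norm{u_\ell}{\mathcal{C}^0(\bar D)},\ \prod_\ell\seminorm{u_\ell}{\Holder{0,\gamma}{\bar D}}\Bigr\}
=\prod_\ell\norm{u_\ell}{\Holder{0,\gamma}{\bar D}},
\]
which is false in general (take $k=2$ with $u_1$ a nonzero constant and $\seminorm{u_2}{\Holder{0,\gamma}{\bar D}}>\norm{u_2}{\mathcal{C}^0(\bar D)}$). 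Your route avoids this gap: by recognizing that the seminorm already ranges over \emph{all} nonempty subsets $\vet i\subseteq\{1,\ldots,k\}$ and adjoining the $\mathcal{C}^0$-norm as the case $\vet i=\emptyset$, you can invoke the distributive identity
\[
\max_{\vet i\subseteq\{1,\ldots,k\}}\ \prod_{\ell\in\vet i}b_\ell\prod_{\ell\notin\vet i}a_\ell
=\prod_{\ell=1}^k\max\{a_\ell,b_\ell\},
\]
which delivers both inequalities at once. So your version is not only correct but actually repairs the paper's lower-bound step.
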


\begin{proof}
Using~\eqref{eq:norm_mixed_holder}, we have:
\begin{equation*}
	\norm{u}{\mcC^{0,\gamma,mix}(\bar D^{\times k})}
	=\max\left\{\norm{u_1\otimes\cdots\otimes u_k}
		{\mcC^0(\bar D^{\times k})},
	\seminorm{u_1\otimes\cdots\otimes u_k}
		{\mcC^{0,\gamma,mix}(\bar D^{\times k})}\right\}.
\end{equation*}
Observe that:
\begin{align*}
	& \norm{u_1\otimes\cdots\otimes u_k}{\mcC^0(\bar D^{\times k})}
	= \max_{(y_1,\ldots,y_k)\in\bar D^{\times k}} 
		\abs{u_1(y_1)\otimes\cdots\otimes u_k(y_k)}\\
	& \quad = \max_{(y_1,\ldots,y_k)\in\bar D^{\times k}} 
		\prod_{\ell=1}^k \abs{u_{\ell}(y_\ell)}
	= \prod_{\ell=1}^k \max_{y_\ell\in\bar D} \abs{u_{\ell}(y_\ell)}
	= \prod_{\ell=1}^k \norm{u_\ell}{\mcC^0(\bar D)}.
\end{align*}
We focus now on the seminorm of $u$:
\begin{align}
	\nonumber
	&\seminorm{u_1\otimes\cdots\otimes u_k}
		{\mcC^{0,\gamma,mix}(\bar D^{\times k})}
	= \max_{j=1,\ldots,k} 
		\sup_{\substack{\vet y\in \bar D^{\times k},\ \vet{h}\neq 0,\\
	\norm{\vet h}{0}=j,\ \vet y+\vet h\in \bar D^{\times k}}}
	\abs{D^{\gamma,mix}_{\vet i} u(y_1,\ldots,y_k)}\\
	\nonumber
	& \quad = \max_{j=1,\ldots,k}
		\sup_{\substack{\vet y\in \bar D^{\times k},\ \vet{h}\neq 0,\\
		\norm{\vet h}{0}=j,\ \vet y+\vet h\in\bar D^{\times k}}}
		\prod_{\ell\in\{1,\ldots,k\}\setminus\{\vet i\}}
			\abs{u_\ell(y_\ell)}
		\prod_{\ell\in\{\vet i\}}
			\abs{D^\gamma_{\ell}u_\ell(y_\ell)}\\
	\label{eq:prop_part1}
	& \quad = \max_{j=1,\ldots,k}
		\prod_{\substack{\ell\in\{1,\ldots,k\}\setminus\{\vet i\}\\
			\norm{\vet i}{0}=j}}
		\sup_{y_\ell\in\bar D}\abs{u_\ell(y_\ell)}
		\prod_{\substack{\ell\in\{\vet i\}\\
			\norm{\vet i}{0}=j}}
		\sup_{\substack{y_\ell\in\bar D,h_\ell\neq 0\\y_\ell+h_\ell\in\bar D}}
		\abs{D^\gamma_{\ell} u_\ell(y_\ell)}.
\end{align}
Choosing $j=k$, we have
\begin{align*}
	\eqref{eq:prop_part1}
	& \geq \prod_{\ell=1}^k \sup_{\substack{y_\ell\in\bar D,h_\ell\neq 0\\y_\ell+h_\ell\in\bar D}}
		\abs{D^\gamma_{\ell} u_\ell(y_\ell)} 
	= \prod_{\ell=1}^k \seminorm{u_\ell}{\Holder{0,\gamma}{\bar D}}.
\end{align*}
On the other hand, given $j^\star$ the index which realizes the maximum, we have
\begin{align*}
	\eqref{eq:prop_part1}
	& =
		\prod_{\substack{\ell\in\{1,\ldots,k\}\setminus\{\vet i\}\\
			\norm{\vet i}{0}=j^\star}}
		\sup_{x_\ell\in\bar D}\abs{u_\ell(y_\ell)}
		\prod_{\substack{\ell\in\{\vet i\}\\
			\norm{\vet i}{0}=j^\star}}
		\sup_{\substack{y_\ell\in\bar D,h_\ell\neq 0\\y_\ell+h_\ell\in\bar D}}
			\abs{D^\gamma_{\ell} u_\ell(y_\ell)}\\
	& =  \prod_{\substack{\ell\in\{1,\ldots,k\}\setminus\{\vet i\}\\
			\norm{\vet i}{0}=j^\star}}
		\norm{u_\ell}{\mcC^0(\bar D)}
		\prod_{\substack{\ell\in\{\vet i\}\\
			\norm{\vet i}{0}=j^\star}}
		\seminorm{u_\ell}{\Holder{0,\gamma}{\bar D}}
	\leq \prod_{\ell=1}^k \norm{u_\ell}{\Holder{0,\gamma}{\bar D}}.
\end{align*}
Hence, we have proved:
\begin{align*}
	\norm{u}{\mcC^{0,\gamma,mix}(\bar D^{\times k})}
	& \geq \max\left\{
		\prod_{\ell=1}^k \norm{u_\ell}{\mcC^0(\bar D)},
		\prod_{\ell=1}^k \seminorm{u_\ell}{\Holder{0,\gamma}{\bar D}}
	\right\}
	= \prod_{\ell=1}^k \norm{u_\ell}{\Holder{0,\gamma}{\bar D}},
\end{align*}
and
\begin{align*}
	\norm{u}{\mcC^{0,\gamma,mix}(\bar D^{\times k})}
	& \leq \max\left\{
		\prod_{\ell=1}^k \norm{u_\ell}{\mcC^0(\bar D)},
		\prod_{\ell=1}^k \norm{u_\ell}{\Holder{0,\gamma}{\bar D}}
	\right\}
	= \prod_{\ell=1}^k \norm{u_\ell}{\Holder{0,\gamma}{\bar D}},
\end{align*}
and~\eqref{eq:u_tp} follows.
\end{proof}

\begin{remark}
With small modifications to the proof, it is possible to prove that Proposition~\ref{prop:u_tp} holds for H\"older spaces with higher regularity, yielding to:
	\begin{equation}
		\label{eq:u_tp_n}
		\norm{u}{\Holdermix{n,\gamma}{\bar D^{\times k}}}
		= \prod_{\ell=1}^k\norm{u_\ell}{\Holder{n,\gamma}{\bar D}},
	\end{equation}
	for all $u(y_1,\ldots,y_k):=u_1(y_1)\otimes\cdots\otimes u_k(y_k)\in \Holdermix{n,\gamma}{\bar D^{\times k}}$.
\end{remark}

\subsection{Diagonal trace of Sobolev space-valued mixed H\"older maps}

In this section we focus on maps in the space $\Cspacemix{n,\gamma}{\bar D^{\times k}}{V}$, where $V$ is the Sobolev space $W_x^{m,p}(D)$.

\begin{definition}[Diagonal trace]
\label{def:trace}
Let $p,q,N$ be positive integers satisfying $1\leq p\leq q\leq N$, and let $v$ be a function of $N$ variables.
Then the diagonal trace function $\Trdiaggen{p}{q} v$ is a function of $N-(q-p)$ variables, defined as
\begin{align*}
\left(\Trdiaggen{p}{q}\right) v(x_1,\ldots,x_{p},x_{q+1},\ldots,x_N)
:=v(x_1,\ldots,x_{p-1},\underbrace{x_{p},\ldots,x_{p}}_{(q-p+1)-\textmd{times}},x_{q+1},\ldots,x_N).
\end{align*}
\end{definition}

\begin{proposition}
\label{prop:traccia_Wp}
Let $\varphi=\varphi(x,y_1,\ldots,y_k)\in\Cspacemixpedex{n,\gamma}{y_1,\ldots,y_k}{\bar D^{\times k}}{W_x^{m,p}(D)}$, with $D\subset\R^d$, $k\geq 1$ integer, $n\geq m\geq 0$ integers, $\gamma\in (0,1]$ and $p>\frac{2d}{\gamma}$.
Then, for all $j=2,\ldots,k+1$, and for all $(y_j,\ldots,y_k)\in D^{\times (k-j+1)}$, $(\Trdiag{j}\varphi)(x;y_j,\ldots,y_k)\in W_x^{m,p}(D)$.
In particular, there exists $C_{tr}>0$ such that
\begin{align}
    \nonumber
    &\norm{(\Trdiag{j}\varphi)(x;y_j,\ldots,y_k)}{W_x^{m,p}(D)}\\
    \label{eq:traccia_Wp_2}
    &\quad \leq C_{tr}^{j-1} \norm{\varphi(x,y_1,\ldots,y_{j-1};y_j,\ldots,y_k)}
    {\Cspacemixpedex{n,\gamma}{y_1,\ldots,y_{j-1}}{\bar D^{\times (j-1)}}{W^{m,p}_x(D)}},
\end{align}
for all $(y_j,\ldots,y_k)\in D^{\times (k-j+1)}$.\\
Moreover, $\Trdiag{j}\varphi\in\Cspacemixpedex{n,\gamma}{y_j,\ldots,y_k}{\bar D^{\times (k-j+1)}}{W^{m,p}(D)}$, and
\begin{equation}
    \label{eq:traccia_Wp_Cnmix}
	\norm{\Trdiag{j}\varphi}
		{\Cspacemixpedex{n,\gamma}{y_j,\ldots,y_k}{\bar D^{\times (k-j+1)}}{W^{m,p}(D)}}
	\leq C_{tr}^{j-1} \norm{\varphi}
		{\Cspacemixpedex{n,\gamma}{y_1,\ldots,y_{k}}{\bar D^{\times k}}{W^{m,p}_x(D)}}
\end{equation}
for all $j=2,\ldots,k+1$.
\end{proposition}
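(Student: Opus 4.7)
My plan is to reduce the proposition, in three steps, to a single base lemma about evaluating a Banach-valued H\"older map on the diagonal of $D\times D$, and then prove that lemma via a dyadic Kolmogorov-type argument. First, I would note that the $(j-1)$-fold diagonal trace $\Trdiag{j}$ is the composition of $j-1$ single-step traces, each of which identifies one further $y_\ell$ with $x$. If the $j=2$ case is established with constant $C_{tr}$ (now with range space $\Cspacemixpedex{n,\gamma}{y_2,\ldots,y_k}{\bar D^{\times(k-1)}}{W^{m,p}_x(D)}$ instead of $W^{m,p}_x(D)$), iterating this single-step estimate produces precisely the factor $C_{tr}^{j-1}$ in~\eqref{eq:traccia_Wp_2}--\eqref{eq:traccia_Wp_Cnmix}. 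It therefore suffices to treat $j=2$.

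For the single-step trace I would peel off the variable $y_1$ using the isomorphism~\eqref{eq:holder_mix_n}:
$$
\Cspacemixpedex{n,\gamma}{y_1,\ldots,y_k}{\bar D^{\times k}}{W^{m,p}_x(D)}
\,\sim\,
\Cspacepedex{n,\gamma}{y_1}{\bar D}{\Cspacemixpedex{n,\gamma}{y_2,\ldots,y_k}{\bar D^{\times(k-1)}}{W^{m,p}_x(D)}},
$$
and treat $(y_2,\ldots,y_k)$ as frozen parameters. A multivariate chain-rule expansion reduces $W^{m,p}_x$-bounds on $\mathrm{Tr}_{|_{1:2}}\varphi$ to $L^p_x$-bounds on derivatives $\der{\beta}_x\der{\beta'}_{y_1}\varphi$ with $|\beta|+|\beta'|\le m$ evaluated on the diagonal $y_1=x$; the standing hypothesis $n\ge m$ ensures that each such derivative lies in $\Cspacepedex{0,\gamma}{y_1}{\bar D}{L^p_x(D)}$. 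Mixed H\"older differences of the trace in the untraced variables $y_2,\ldots,y_k$ are handled by applying the same base estimate to the corresponding $y_\ell$-increments of $\varphi$, which contribute the factor $|y_\ell-y_\ell'|^\gamma$ directly from the mixed-H\"older norm. The whole proposition thus hinges on the base lemma: for $\psi\in\Cspacepedex{0,\gamma}{y}{\bar D}{L^p_x(D)}$ with $p\gamma>2d$, the diagonal $\psi(x,x)$ defines an element of $L^p(D)$, with norm controlled by $\norm{\psi}{\Cspacepedex{0,\gamma}{y}{\bar D}{L^p_x(D)}}$.

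The main obstacle is this base lemma, which must be proved from scratch since $\psi(\cdot,y)$ is only an $L^p_x$-equivalence class and $\psi(x,x)$ has no \emph{a priori} pointwise meaning in $x$. I would construct a canonical representative by dyadic approximation: partition $D$ into cubes $\{Q_n^i\}$ of side $2^{-n}$ with centers $y_n^i$, and set $\psi_n(x):=\sum_i\psi(x,y_n^i)\chi_{Q_n^i}(x)$, which is unambiguously defined in $L^p(D)$ because only finitely many equivalence classes are involved. A direct count using only the H\"older seminorm of $\psi$ in $y$ yields
$$
\norm{\psi_{n+1}-\psi_n}{L^p(D)}^p
\le\sum_{i,k}\norm{\psi(\cdot,y_{n+1}^{i,k})-\psi(\cdot,y_n^i)}{L^p(D)}^p
\le C\,[\psi]_{\Cspacepedex{0,\gamma}{y}{\bar D}{L^p_x}}^p\cdot 2^{n(d-p\gamma)},
$$
so $\{\psi_n\}$ is Cauchy in $L^p(D)$ and a limit $\psi^{\mathrm{diag}}$ emerges with the required norm bound. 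The proposition's stronger hypothesis $p\gamma>2d$ (rather than the bare $p\gamma>d$ needed for $L^p$-Cauchyness of this particular telescoping) presumably reflects the additional requirement that the diagonal representative retain enough H\"older modulus in $y$ to propagate through the chain-rule and difference arguments of the previous paragraph, which I would secure via a Garsia--Rodemich--Rumsey-type refinement of the above estimate. This is the most delicate point: one must combine $L^p$-integrability in $x$ with only fractional H\"older regularity in $y$ to recover well-defined pointwise-in-$x$ information along the diagonal.
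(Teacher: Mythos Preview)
Your overall reduction strategy---factor $\Trdiag{j}$ as $j-1$ single-step traces, peel off $y_1$ via the isomorphism~\eqref{eq:holder_mix_n}, use the chain rule to reduce the $W^{m,p}$ bound to $L^p$ bounds on mixed derivatives (using $n\ge m$), and treat the remaining $y_\ell$-variables by applying the base estimate to increments---matches the paper's Steps~1--3 exactly. The genuine difference is in the proof of the base lemma. The paper does \emph{not} build a dyadic Cauchy sequence; instead it writes
\[
\norm{\psi(\cdot,x)-\psi(\cdot,y)}{L^p_x}^p
\le C_s^p|D|^{\beta p}\int_D\!\!\int_D\frac{\norm{\psi(\cdot,z_1)-\psi(\cdot,z_2)}{L^p_x}^p}{|z_1-z_2|^{d+sp}}\,dz_1\,dz_2,
\]
via the fractional Sobolev embedding $W^{s,p}_y\hookrightarrow\Holder{0,\beta}{}$ for $sp>d$, and then bounds the double integral by the $\Holder{0,\tilde\gamma}{}$ seminorm of $y\mapsto\psi(\cdot,y)$ with $\tilde\gamma=s+\varepsilon/p<\gamma$. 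Your dyadic argument is a legitimate and more elementary alternative; the telescoping estimate you wrote is correct and indeed converges whenever $p\gamma>d$.

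One correction: your speculation that the stated threshold $p>2d/\gamma$ encodes a deeper obstruction (to be overcome by a Garsia--Rodemich--Rumsey refinement) is off the mark. In the paper's argument the factor $2$ arises only from the convenient choice $s=\gamma/2$; any $s$ with $d/p<s<\gamma$ would work, so the paper's method, like yours, really only needs $p>d/\gamma$. Nothing further is required to propagate H\"older regularity in the untraced variables: that part is handled, in both approaches, by applying the base estimate to the finite differences $D^\gamma_\ell\varphi$, which already lie in the same space as $\varphi$. So you may drop the GRR detour and simply record that your base lemma (hence the whole proposition) holds under the weaker constraint $p\gamma>d$.
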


\begin{proof}
We prove the results in three steps.

\textbf{Step 1: inequality~\eqref{eq:traccia_Wp_2} for $k=1$ and $j=2$}\\
Let $\xi=\xi(x,y)\in\Cspacepedex{n,\gamma}{y}{\bar D}{W^{m,p}_x(D)}$, i.e., $x\mapsto\xi(x,\cdot)\in\Holder{n,\gamma}{\bar D}$ a.e., and $y\mapsto\xi(\cdot,y)\in W^{m,p}(D)$. Denote with $g(x):=\left(\Trdiag{2}\xi\right)(x)$ for all $x=(x_1,\ldots,x_d)\in D$. We want to show that $g\in W^{m,p}(D)$, i.e., $\hder{ \alpha}{x}g=\frac{\partial^{\abs{\alpha}} g}{\partial^{\alpha_1}x_1\cdots \partial^{\alpha_d}x_d}\in L^p(D)$ for all $\alpha=(\alpha_1,\ldots,\alpha_d)\geq 0$ with $\abs{\alpha}=\alpha_1+\ldots+\alpha_d\leq m$.

Let $\alpha$ be such that $\abs{\alpha}\leq m$, and let $x^{(i)}=(x^{(i)}_1,\ldots,x^{(i)}_d)\in D$ for $i=1,2$. Then, it holds
\begin{align*}
    &\norm{\hder{\alpha}{x} g(x)}{L^p_x(D)}
    = \norm{\frac{\partial^{\abs{\alpha}} g}
        {\partial^{\alpha_1}x_1\cdots \partial^{\alpha_d}x_d}}{L^p_x(D)}\\
    & \quad 
    = \norm{\sum_{t_1=0}^{\alpha_1} \cdots \sum_{t_d=0}^{\alpha_d}
        \bc{\alpha_1}{t_1}\cdots \bc{\alpha_d}{t_d}
        \frac{\partial^{\abs{\alpha}} \xi(x^{(1)}, x^{(2)})}
        {\partial^{t_1}x_1^{(1)}\partial^{\alpha_1-t_1}x_1^{(2)}\cdots
            \partial^{t_d}x_d^{(1)}\partial^{\alpha_d-t_d}x_d^{(2)}}
        \bigg|_{(x,x)}}
        {L^p_x(D)}\\
    & \quad
    \leq \sum_{t_1=0}^{\alpha_1} \cdots \sum_{t_d=0}^{\alpha_d}
        \bc{\alpha_1}{t_1}\cdots \bc{\alpha_d}{t_d}
        \norm{\frac{\partial^{\abs{\alpha}} \xi(x^{(1)}, x^{(2)})}
        {\partial^{t_1}x_1^{(1)}\partial^{\alpha_1-t_1}x_1^{(2)}\cdots
            \partial^{t_d}x_d^{(1)}\partial^{\alpha_d-t_d}x_d^{(2)}}
        \bigg|_{(x,x)}}
        {L^p_x(D)}.
\end{align*}
Denote $\hder{\alpha}{t} \xi(x^{(1)},x^{(2)}):=\frac{\partial^{\abs{\alpha}} \xi(x^{(1)}, x^{(2)})}{\partial^{t_1}x_1^{(1)}\partial^{\alpha_1-t_1}x_1^{(2)}\cdot\partial^{t_d}x_d^{(1)}\partial^{\alpha_d-t_d}x_d^{(2)}}$, where $t=(t_1,\ldots,t_d)$. 
Using the triangular inequality, we have
\begin{align}
    \nonumber
    & \norm{\hder{\alpha}{t}      
        \xi(x^{(1)},x^{(2)})\big|_{(x,x)}}{L^p_x(D)}\\
    \label{eq:Part1}
    & \quad
        \leq \sup_{y\in D} 
        \norm{\hder{\alpha}{t}      
        \xi(x^{(1)},x^{(2)})\big|_{(x,x)}
        -\hder{\alpha}{t}      
        \xi(x^{(1)},x^{(2)})\big|_{(x,y)}
        }{L^p_x(D)}\\
    \label{eq:Part2}
    & \quad\quad  + \sup_{y\in D} 
        \norm{\hder{\alpha}{t}     
        \xi(x^{(1)},x^{(2)})\big|_{(x,y)}
        }{L^p_x(D)}.
\end{align}
We bound first the term~\eqref{eq:Part1}. According to the Sobolev embedding theorem, if $sp>d$, then $W^{s,p}(D)\hookrightarrow \Holder{0,\beta}{D}$ for all $0<\beta<s-\frac{d}{p}$. Hence, there exists a positive constant $C_s$ such that
\begin{align}
	\label{eq:embedding}
	&\abs{\hder{\alpha}{t} \xi(x^{(1)},x^{(2)})\big|_{(x,y_1)}
        -\hder{\alpha}{t} \xi(x^{(1)},x^{(2)})\big|_{(x,y_2)}}\\
	\nonumber
	& \hspace{2cm}
	\leq C_s \abs{y_1-y_2}^{\beta}
	\left(\int_D\int_D \frac{\abs{ \hder{\alpha}{t} \xi(x^{(1)},x^{(2)})\big|_{(x,z_1)}
	-\hder{\alpha}{t} \xi(x^{(1)},x^{(2)})\big|_{(x,z_2)}}^p}
		{\abs{z_1-z_2}^{d+sp}} dz_1 dz_2\right)^{1/p}.
\end{align}
Using~\eqref{eq:embedding}, we have 
\begin{align*}
    & \sup_{y\in D} 
        \norm{\hder{\alpha}{t}\xi(x^{(1)},x^{(2)})\big|_{(x,x)}
        -\hder{\alpha}{t}\xi(x^{(1)},x^{(2)})\big|_{(x,y)}
        }{L^p_x(D)}^p\\
	& \quad 
	= \sup_{y\in D} \int_D \abs{\hder{\alpha}{ t}\xi(x^{(1)},x^{(2)})\big|_{(x,x)}
        -\hder{\alpha}{t}\xi(x^{(1)},x^{(2)})\big|_{(x,y)}}^pdx\\
	& \quad
	\leq C_s^p \int_D \sup_{y\in D} \abs{x-y}^{\beta p} 
		\left(\int_D\int_D 
		\frac{\abs{\hder{\alpha}{t}\xi(x^{(1)},x^{(2)})\big|_{(x,z_1)}
		-\hder{\alpha}{t}\xi(x^{(1)},x^{(2)})\big|_{(x,z_2)}}^p}
		{\abs{z_1-z_2}^{d+sp}} dz_1 dz_2\right) dx\\
	& \quad
	\leq C_s^p \abs{D}^{\beta p}
		\int_D\int_D\int_D \left(
		\frac{\abs{\hder{\alpha}{t}\xi(x^{(1)},x^{(2)})\big|_{(x,z_1)}
		-\hder{\alpha}{t}\xi(x^{(1)},x^{(2)})\big|_{(x,z_2)}}}
		{\abs{z_1-z_2}^{d/p+s}}\right)^p dz_1 dz_2 dx\\
	& \quad 
	= C_s^p \abs{D}^{\beta p}
		\int_D\int_D \frac{1}{\abs{z_1-z_2}^{d-\varepsilon}}
		\int_D \left(
		\frac{\abs{\hder{\alpha}{t}\xi(x^{(1)},x^{(2)})\big|_{(x,z_1)}
		-\hder{\alpha}{t}\xi(x^{(1)},x^{(2)})\big|_{(x,z_2)}}}
		{\abs{z_1-z_2}^{\varepsilon/p+s}}\right)^p dx dz_1 dz_2 \\
	& \quad
	\leq C_s^p \abs{D}^{\beta p}
		\seminorm{\hder{\alpha}{t}\xi}{\Cspace{0,\varepsilon/p+s}{\bar D}{L^p(D)}}^p
		\int_D\int_D \frac{1}{\abs{z_1-z_2}^{d-\varepsilon}} dz_1 dz_2\\
	& \quad
	\leq C_1(\varepsilon) C_s^p \abs{D}^{\beta p}
			\seminorm{\hder{\alpha}{t}\xi}{\Cspace{0,\varepsilon/p+s}{\bar D}{L^p(D)}}^p
\end{align*}
for all $0<\varepsilon<d$, with $C_1(\varepsilon):=\int_D\int_D \frac{1}{\abs{z_1-z_2}^{d-\varepsilon}} dz_1 dz_2<+\infty$. 
Hence, we have shown that 
\begin{align}
	\nonumber
	& \sup_{y\in D} 
	\norm{\hder{\alpha}{ t}\xi(x^{(1)},x^{(2)})\big|_{(x,x)}
	-\hder{\alpha}{ t}\xi(x^{(1)},x^{(2)})\big|_{(x,y)}}{L^p_x(D)}\\
	\label{eq:bound_part1}
	& \quad 
	\leq (C_1(\varepsilon))^{1/p} C_s \abs{D}^{s-d/p} 
	\seminorm{\hder{\alpha}{t}\xi}{\Cspace{0,\tilde\gamma}{\bar D}{L^p(D)}},
\end{align}
for any $s>\frac{d}{p}$, with $\tilde \gamma=\varepsilon/p+s$. Since $p>\frac{2d}{\gamma}$ and $\varepsilon<d$, by taking $s=\frac{\gamma}{2}>\frac{d}{p}$, we have $\tilde\gamma<\gamma$.

We bound now the second term~\eqref{eq:Part2}.
Since $\abs{\alpha}=\alpha_1+\cdots+\alpha_d\leq m\leq n$, then
\begin{equation}
    \eqref{eq:Part2}\leq \norm{\xi}{\Cspacepedex{n}{y}{\bar D}{W^{m,p}_x(D)}}
	\leq \norm{\xi}{\Cspacepedex{n,\gamma}{y}{\bar D}{W^{m,p}_x(D)}}.    
	\label{eq:bound_part2}
\end{equation}
Putting together~\eqref{eq:bound_part1} and~\eqref{eq:bound_part2}, we conclude~\eqref{eq:traccia_Wp_2} (for $k=1$ and $j=2$) with constant
$C_{tr}=2^m(C_1(\varepsilon)^{1/p}C_s\abs{D}^{s-d/p}+1)$.

\textbf{Step 2: inequality~\eqref{eq:traccia_Wp_2} for $k>1$ and $j=2,\ldots,k+1$}\\
Let $\varphi\in\Cspacemixpedex{n,\gamma}{y_1,\ldots,y_k}{\bar D^{\times k}}{W^{m,p}_x(D)}$, with $k> 1$ and $n\geq m$. 
We prove the proposition by induction on $j$. In Step 1, we have shown that the result holds for $j=2$, namely, for all $(y_2,\ldots,y_k)\in D^{\times(k-1)}$, $\Trdiag{2}\varphi(x;y_2,\ldots,y_k)\in W^{m,p}_x(D)$. In particular,   
\begin{align*}
    \norm{\Trdiag{2}\varphi(x;y_2,\ldots,y_k)}{W^{m,p}_x(D)}
    \leq C_{tr} \norm{\varphi(x,y_1;y_2,\ldots,y_k)}
        {\Cspacepedex{n,\gamma}{y_1}{\bar D}{W^{m,p}_x(D)}},
\end{align*}
for all $(y_2,\ldots,y_k)\in D^{\times(k-1)}$.

By induction, we assume that 
\begin{align*}
    \Trdiag{\ell}\varphi(x;y_\ell,\ldots,y_k)& \in W^{m,p}_x(D)\\
	\nonumber
    \norm{\Trdiag{\ell}\varphi(x;y_\ell,\ldots,y_k)}{W^{m,p}_x(D)}
    &\leq C_{tr} \norm{\Trdiag{\ell-1}\varphi(x;y_{\ell-1},\ldots,y_k)}{\Cspacepedex{n,\gamma}{y_{\ell-1}}{\bar D}{W^{m,p}_x(D)}}
\end{align*}
for all $\ell=3,\ldots,j$, and for all $(y_\ell,\ldots,y_k)\in D^{\times(k-\ell+1)}$.
Then, it holds
\begin{equation}
    \norm{\Trdiag{j}\varphi(x;y_j,\ldots,y_k)}{W^{m,p}_x(D)}
    \label{eq:j_2_high_reg}
    \leq C_{tr}^{j-1} \norm{\varphi(x,y_1,\ldots,y_{j-1};y_j,\ldots,y_k)}
        {\Cspacemixpedex{n,\gamma}{y_1,\ldots,y_{j-1}}{\bar D^{\times j}}{W^{m,p}_x(D)}},
\end{equation}
where we have used the isomorphism~\eqref{eq:holder_mix_n}.

Denote with $\mathbf y=(y_{j+1},\ldots,y_k)$. 
We bound $\norm{\Trdiag{j+1}\varphi(x;\mathbf y)}{W^{m,p}_x(D)}$ as follows:
\begin{align}
    \nonumber
    &\norm{\Trdiag{j+1}\varphi(x;\mathbf y)}{W^{m,p}_x(D)}
	=\norm{\Trdiag{j}\varphi(x,x;\mathbf y)}{W^{m,p}_x(D)}\\
    \label{eq:inequality_n}
    &\quad\leq\sup_{y_j\in D} 
        \norm{\Trdiag{j}\varphi(x,x;\mathbf y)-\Trdiag{j}\varphi(x,y_j;\mathbf y)}
            {W^{m,p}_x(D)}
        +\sup_{y_j\in D}\norm{\Trdiag{j}\varphi(x,y_j;\mathbf y)}{W^{m,p}_x(D)}.
\end{align}
Using inequality~\eqref{eq:j_2_high_reg} we bound the second term in the right hand side of~\eqref{eq:inequality_n} as:
\begin{align*}
    \sup_{y_j\in D}\norm{\Trdiag{j}\varphi(x,y_j;\mathbf y)}{W^{m,p}_x(D)}
    &\leq C_{tr}^{j-1} \sup_{y_j\in D} \norm{\varphi(x,y_1,\ldots,y_{j};\mathbf y)}{\Cspacemixpedex{n,\gamma}{y_1,\ldots,y_{j-1}}{\bar D^{\times (j-1)}}{W^{m,p}_x(D)}}\\
    &\leq C_{tr}^{j-1} \norm{\varphi(x,y_1,\ldots,y_{j};\mathbf y)}{\Cspacemixpedex{n,\gamma}{y_1,\ldots,y_{j}}{\bar D^{\times j}}{W^{m,p}_x(D)}}.    
\end{align*}
We bound the first term in the right hand side of~\eqref{eq:inequality_n} by proceeding as in the case $k=1$ and $j=2$:
\begin{align*}
    &\sup_{y_j\in D} 
        \norm{\Trdiag{j}\varphi(x,x;\mathbf y)-\Trdiag{j}\varphi(x,y_j;\mathbf y)}
        {W^{m,p}_x(D)}\\
    &\quad\leq C_{tr} \norm{\Trdiag{j}\varphi(x,y_j;\mathbf y)}{\Cspacepedex{n,\gamma}{y_j}{D}
        {W^{m,p}_x(D)}}\\
    &\quad \stackrel{\eqref{eq:j_2_high_reg}}{\leq} C_{tr}^j 
        \norm{\varphi(x,y_1,\ldots,y_j;\mathbf y)}{\Cspacemixpedex{n,\gamma}{y_1,\ldots,y_j}{\bar D^{\times j}}{W^{m,p}_x(D)}},
\end{align*}
and the conclusion holds.

\textbf{Step 3: mixed H\"older regularity of $\Trdiag{j}\varphi$}\\
Let $\xi(x,y_1,y_2)\in\Cspacemixpedex{n,\gamma}{y_1,y_2}{\bar D^{\times 2}}{W^{m,p}_x(D)}$. By applying the same steps as in Step 2 to the increment in the variable $y_2$ of the trace of $\xi$, $D^{\gamma}_{2}\left(\Trdiag{2}\xi\right)(x;y_2)$, we conclude~\eqref{eq:traccia_Wp_Cnmix} in the case $k=2$ and $j=2$. Then, by induction, we conclude~\eqref{eq:traccia_Wp_Cnmix} for any $k$ and $j$.
\end{proof}

\section{Recursion on the correlations - analytical results}
\label{sec:Recursive equation for Eu} 

This section is organized as follows. We first study the mixed H\"older regularity of the input of the recursion~\eqref{eq:recursion} , i.e., the $(k+1)$-points correlation function $\mean{u^0\otimes Y^{\otimes k}}$ (see Corollary~\ref{cor:mixed_reg_u0yk}). Then, in Section~\ref{sec:well-posedness_recursion}, we prove the well-posedness and regularity of the recursion itself.

\subsection{Mixed H\"older regularity of the input of the recursion}
\label{sec:mixed_regularity_yk}

The following proposition states the mixed H\"older regularity of the $(k+1)$-points correlation function $\mean{v\otimes Y^{\otimes k}}$, where $v$ belongs to a Banach space $V$.

\begin{proposition}
\label{prop:reg_CorrY}
Let $V$ be a Banach space, and $Y$ be a centered Gaussian random field such that $Y\in\Lspace{p}{\Holder{n,\gamma}{\bar D}}$, $n\geq 0$, for all $1<p<+\infty$. 
Then, for every $v\in V$ and every positive integer $k$, the $(k+1)$-points correlation $\mean{v\otimes Y^{\otimes k}}$ belongs to the H\"older space with mixed regularity $\Cspacemixpedex{n,\gamma}{\vet y}{\bar D^{\times k}}{V_x}$.
Moreover, it holds:
\begin{equation}
	\label{eq:norm_CorrY}
	\norm{\mean{v\otimes Y^{\otimes k}}}{\Cspacemixpedex{n,\gamma}{\vet y}{\bar D^{\times k}}{V_x}}
	= \norm{v}{V}
		\norm{\mean{Y^{\otimes k}}}{C^{n,\gamma,mix}(\bar D^{\times k})}.
\end{equation}
\end{proposition}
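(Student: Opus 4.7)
The plan is to exploit that $v\in V$ is a fixed, deterministic element (independent of both $\omega$ and $\vec y$), so the expectation factors as
\begin{equation*}
    \mean{v\otimes Y^{\otimes k}}(x,y_1,\ldots,y_k) \;=\; v(x)\cdot\mean{Y^{\otimes k}}(y_1,\ldots,y_k),
\end{equation*}
reducing the $V$-valued statement to a scalar one about $\mean{Y^{\otimes k}}$, and then verifying the norm identity on this tensor form.

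First I would establish the scalar claim $\mean{Y^{\otimes k}}\in\Holdermix{n,\gamma}{\bar D^{\times k}}$. Pathwise, $Y^{\otimes k}(\omega;y_1,\ldots,y_k) = Y(\omega,y_1)\cdots Y(\omega,y_k)$ is a pure tensor product of factors in $\Holder{n,\gamma}{\bar D}$, so by Proposition~\ref{prop:u_tp} (in its higher-regularity form \eqref{eq:u_tp_n}) we have, almost surely,
\begin{equation*}
    \norm{Y^{\otimes k}(\omega;\cdot)}{\Holdermix{n,\gamma}{\bar D^{\times k}}} = \norm{Y(\omega,\cdot)}{\Holder{n,\gamma}{\bar D}}^k.
\end{equation*}
The assumption $Y\in\Lspace{p}{\Holder{n,\gamma}{\bar D}}$ for all $p<\infty$ (in particular $p=k$) then guarantees that this random variable is integrable; so one may apply the Bochner-integral inequality $\norm{\mathbb{E}[\cdot]}{}\le\mathbb{E}[\norm{\cdot}{}]$ and dominated convergence to commute expectation with finite differences and partial derivatives in $\vec y$, obtaining $\mean{Y^{\otimes k}}\in\Holdermix{n,\gamma}{\bar D^{\times k}}$ with a finite norm.

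Once the scalar mixed-H\"older regularity of $\mean{Y^{\otimes k}}$ is secured, I would plug the factorization $v\otimes\mean{Y^{\otimes k}}$ directly into Definition~\ref{def:Banach-valued mixed Holder space} (more precisely into~\eqref{eq:mixed_holder_n_V_norm} and \eqref{eq:mixed_holder_n_V_seminorm}). For every multi-index $\bm\alpha$, every admissible increment $\vec h$ and every $\vec y$,
\begin{equation*}
    \norm{D^{\gamma,mix}_{\vec i,\vec h}\der{\bm\alpha}[v\cdot\mean{Y^{\otimes k}}](\cdot,\vec y)}{V_x}
    =\norm{v}{V}\cdot\bigl|D^{\gamma,mix}_{\vec i,\vec h}\der{\bm\alpha}\mean{Y^{\otimes k}}(\vec y)\bigr|,
\end{equation*}
so that taking suprema and maxima pulls $\norm{v}{V}$ out as a multiplicative constant in each term appearing in the norm; the analogous identity holds for the $\Cspacemix{n}{\bar D^{\times k}}{V}$-part. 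The equality \eqref{eq:norm_CorrY} then follows by combining the two.

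The only genuinely delicate point is the first step: rigorously interchanging $\mathbb{E}$ with differentiation and with the supremum that defines the H\"older seminorm. Linearity of the expectation disposes of the finite-difference quotient and of classical derivatives for fixed increments via dominated convergence, using the envelope $\norm{Y}{\Holder{n,\gamma}{\bar D}}^k\in L^1(\Omega)$; passing to the supremum over $\vec y$ and $\vec h$ then only requires the inequality $\norm{\mathbb{E}[\cdot]}{}\le\mathbb{E}[\norm{\cdot}{}]$, again controlled by the same envelope. Everything else is bookkeeping on the definitions.
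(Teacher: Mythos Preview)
Your proposal is correct and follows the same overall architecture as the paper: factor $\mean{v\otimes Y^{\otimes k}}=v\otimes\mean{Y^{\otimes k}}$, establish the scalar mixed-H\"older regularity of $\mean{Y^{\otimes k}}$, then read off the $V$-valued result and the norm identity directly from the definitions. The difference lies in how Step~1 is carried out. The paper proceeds by hand: it fixes a multi-index $\bm\alpha$, commutes $\partial^{\bm\alpha}$ with the expectation, and then applies the generalized H\"older inequality in $\omega$ to bound $\abs{\mean{\partial^{\alpha_1}Y(y_1)\cdots\partial^{\alpha_k}Y(y_k)}}$ by $\prod_j\norm{Y}{L^k(\Omega;\Holder{n}{\bar D})}$; it then repeats the same computation with difference quotients inserted. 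You instead invoke Proposition~\ref{prop:u_tp} (in the form~\eqref{eq:u_tp_n}) to obtain pathwise $\norm{Y^{\otimes k}(\omega;\cdot)}{\Holdermix{n,\gamma}{\bar D^{\times k}}}=\norm{Y(\omega,\cdot)}{\Holder{n,\gamma}{\bar D}}^k$ and then apply the Bochner-integral bound once. This is more economical and uses the paper's own lemma as the intended shortcut; the paper's explicit computation is essentially a second proof of that lemma specialized to the case at hand. Your Step~2 and the derivation of~\eqref{eq:norm_CorrY} coincide with the paper's argument verbatim.
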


\begin{proof}
We prove that $\mean{v\otimes Y^{\otimes k}}\in \Cspacemixpedex{n,\gamma}{\vet y}{\bar D^{\times k}}{V_x}$ in two steps.

\textbf{Step 1: $\mean{Y^{\otimes k}}\in \Holdermix{n,\gamma}{\bar{D}^{\times k}}$}\\
We have to show that
\begin{enumerate}[label=(\roman*)]
    \item $\mean{Y^{\otimes k}}\in\Holdermix{n}{\bar{D}^{\times k}}$, i.e., 
    $\hder{\bm\alpha}{}\mean{Y^{\otimes k}}
    =\hder{\alpha_1}{x_1}\cdots\hder{\alpha_k}{x_k}\mean{Y^{\otimes k}}
    \in\Holder{0}{\bar D^{\times k}}$ for all $\bm\alpha=(\alpha_1,\ldots,\alpha_k)\in\N^{kd}$ with $0\leq\abs{\alpha_j}\leq n$, for $j=1,\ldots,k$.
    \label{enum:i}  
    \item $\hder{\bm\alpha}{} \mean{Y^{\otimes k}} \in \Holdermix{0,\gamma}{\bar{D}^{\times k}}$, for all $\bm{\alpha}=(\alpha_1,\ldots,\alpha_k)\in\N^{kd}$ with $\abs{\alpha_j}=n$, for some $j=1,\ldots,k$. 
    \label{enum:ii}
\end{enumerate}

Let us start with~\ref{enum:i}. Fix $\bm\alpha=(\alpha_1,\ldots,\alpha_k)\in\N^{kd}$ with $0\leq \abs{{\alpha}_j}\leq n$, for $j=1,\ldots,k$. Then,
\begin{align}
\nonumber
\norm{\hder{\bm\alpha}{} \mean{Y^{\otimes k}}}{\Holder{0}{\bar{D}^{\times k}}}
& = \max_{\vet y\in\bar{D}^{\times k}} \abs{ \hder{\bm\alpha}{}\mean{Y^{\otimes k} }(\vet y)}\\
\nonumber
& = \max_{\vet y\in\bar{D}^{\times k}} \abs{\hder{\alpha_1}{y_1}\cdots \hder{\alpha_k}{y_k}
    \mean{Y^{\otimes k} }(y_1,\ldots,y_k)}\\
\nonumber
& = \max_{\vet y\in\bar{D}^{\times k}} \abs{\mean{
    \hder{\alpha_1}{y_1} Y(y_1)\otimes\cdots\otimes\hder{\alpha_k}{y_k} Y(y_k)}}\\
\label{eq:reg1}
& \leq \max_{\vet y\in\bar{D}^{\times k}} 
	\mean{ \abs{ \hder{\alpha_1}{y_1}Y(y_1)
		 \otimes \cdots \otimes
		 \hder{\alpha_k}{y_k}Y(y_k) }}.
\end{align}
Using the H\"older inequality, we get
\begin{align*}
\eqref{eq:reg1} 
& \leq \max_{\vet y\in\bar{D}^{\times k}} 
    \prod_{i=1}^k \left( \mean{ \abs{\hder{\alpha_i}{y_i}Y(y_i)}^k }\right)^{1/k}
\leq \prod_{i=1}^k \max_{y_i\in\bar D}
	\left( \mean{ \abs{ \hder{\alpha_i}{y_i}Y(y_i)}^k }\right)^{1/k}.
\end{align*}
Observe that 
\begin{align*}
& \max_{y_i\in\bar D}
    \left( \mean{ \abs{ \hder{\alpha_i}{y_i} Y(y_i)}^k }\right)^{1/k}
= \left( \max_{y_i\in\bar D} \mean{ \abs{ \hder{\alpha_i}{y_i}Y(y_i)}^k } \right)^{1/k}\\
& \quad \leq
\left(\mean{\max_{y_i\in\bar D} \abs{ \hder{\alpha_i}{y_i} Y(y_i)}^k}\right)^{1/k}
= \left(\mean{\left(\max_{y_i\in\bar D}
    \abs{ \hder{\alpha_i}{y_i}Y(y_i)}\right)^k }\right)^{1/k}\\
&\quad\leq\left(\mean{\norm{Y}{\mcC^n(\bar D)}^k}\right)^{1/k}
= \norm{Y}{\Lspace{k}{\Holder{n}{\bar D}}}.
\end{align*}
We conclude that
\begin{equation*}
    \prod_{i=1}^k \max_{y_i\in\bar D}
	\left( \mean{ \abs{ \hder{\alpha_i}{y_i} Y(y_i)}^k }\right)^{1/k}
    \leq \norm{Y}{\Lspace{k}{\Holder{n}{\bar D}}}^k < +\infty.
\end{equation*}

We prove now~\ref{enum:ii}. Let $\bm\alpha=(\alpha_1,\ldots,\alpha_k)$ with $\abs{\alpha_j}=n$ for some $j=1,\ldots,k$. Using Definitions~\ref{def:holder_increment} and~\ref{def:Banach-valued mixed Holder space}, we have
\begin{align}
	\nonumber
	&\seminorm{ \hder{\bm\alpha}{} \mean{Y^{\otimes k}}} {\Holdermix{0,\gamma}{\bar D^{\times k}}}
	= \max_{j=1,\ldots,k} \sup_{\substack{\vet y,\vet h\\\norm{\vet h}{0}=j}}
	\abs{ D^{\gamma,mix}_{\vet i} \hder{\bm\alpha}{} \mean{Y^{\otimes k}}}\\
	\nonumber
	& \quad =
	\max_{j=1,\ldots,k} \sup_{\substack{\vet y,\vet h\\\norm{\vet h}{0}=j}}
	\abs{ D^{\gamma}_{i_j}\cdots D^{\gamma}_{i_1} \hder{\bm\alpha}{} \mean{Y^{\otimes k}} }\\
	\nonumber
	& \quad =
	\max_{j=1,\ldots,k} \sup_{\substack{\vet y,\vet h\\\norm{\vet h}{0}=j}}
	\abs{ D^{\gamma}_{i_j}\cdots D^{\gamma}_{i_1} 
	\mean{ \hder{\alpha_1}{y_1}Y(y_1)\otimes\cdots\otimes \hder{\alpha_k}{y_k}Y(y_k) } }\\
	\label{eq:reg2}
	& \quad =
	\max_{j=1,\ldots,k} \sup_{\substack{\vet y,\vet h\\\norm{\vet h}{0}=j}}
    \abs{ \mean{
	\bigotimes_{\ell\in\vet i(\vet h)}
    \frac{\hder{\alpha_{\ell}}{y_{\ell}} Y(y_\ell+h_\ell)-\hder{\alpha_\ell}{y_\ell} Y(y_\ell)}{\norm{h_\ell}{}^{\gamma}}
	\cdot \bigotimes_{\ell'\in\vet i(\vet h)^c} \hder{\alpha_{\ell'}}{y_{\ell'}} Y(y_{\ell'})
}}.
\end{align}

Proceeding as in the proof of~\ref{enum:i}, we conclude
\begin{align*}
\eqref{eq:reg2} 
& \leq 
\max_{j=1,\ldots,k}
	\prod_{\ell\in\vet i(\vet h)}
    \left(
    \mean{ \sup_{\substack{\vet y,\vet h\\\norm{\vet h}{0}=j}}
    \abs{ \frac{\hder{\alpha_\ell}{y_\ell} Y(y_\ell+h_\ell)-\hder{\alpha_\ell}{y_\ell}Y(y_\ell)}
    {\norm{h_\ell}{}^{\gamma}}}^k}\right)^{1/k}\\
	&\quad
	\prod_{\ell'\in\vet i(\vet h)^c}
    \left(
    \mean{ \abs{\hder{\alpha_{\ell'}}{y_{\ell'}} Y(y_{\ell'})}^{k}}\right)^{1/(k)}
	\\
& \leq \norm{Y}{\Lspace{k}{\Holder{n,\gamma}{\bar D}} }^k<+\infty. 
\end{align*}

\textbf{Step 2: $\mean{v\otimes Y^{\otimes k}}\in \Cspacemixpedex{n,\gamma}{\vet y}{\bar D^{\times k}}{V_x}$}\\
It is enough to observe that
\begin{align*}
	\abs{\mean{v\otimes Y^{\otimes k}}}_{\Cspacemixpedex{n,\gamma}{\vet y}{\bar D^{\times k}}{V_x}}
	&\quad = 
	\abs{v\otimes \mean{Y^{\otimes k}}}_{\Cspacemixpedex{n,\gamma}{\vet y}{\bar D^{\times k}}{V_x}}\\
	&\quad = \norm{v}{V} \abs{\mean{Y^{\otimes k}}}_{\Holdermix{n,\gamma}{\bar D^{\times k}}}
	<+\infty.
\end{align*}

It remains us to show equality~\eqref{eq:norm_CorrY}. By definition, it holds:
\begin{align*}
	& \norm{\mean{v\otimes Y^{\otimes k}}}{\Cspacepedex{n,mix}{\vet y}{\bar D^{\times k}}{V_x}}
	= \max_{\bm\alpha} \max_{\vet y} 
		\norm{\der{\bm\alpha}\mean{v\otimes Y^{\otimes k}}(\cdot,\vet y)}{V_x}\\
	& \quad = \max_{\bm\alpha} \max_{\vet y} 
		\norm{v(\cdot)\otimes \der{\bm\alpha}\mean{Y^{\otimes k}}(\vet y)}{V_x}
	= \max_{\bm\alpha} \max_{\vet y} \norm{v}{V_x} \abs{\der{\bm\alpha}\mean{Y^{\otimes k}}}\\
	& \quad = \norm{v}{V_x} \norm{\der{\bm\alpha}\mean{Y^{\otimes k}}}{C^{n,mix}_{\vet y}(\bar D^{\times k})}.
\end{align*}
In the same way, it is possible to show that
\begin{equation*}
	\seminorm{\mean{v\otimes Y^{\otimes k}}}{\Cspacemixpedex{n,\gamma}{\vet y}{\bar D^{\times k}}{V_x}}
	= \norm{v}{V_x} \seminorm{\der{\bm\alpha}\mean{Y^{\otimes k}}}{C^{n,\gamma,mix}_{\vet y}(\bar D^{\times k})},
\end{equation*}
and equality~\eqref{eq:norm_CorrY} follows.
\end{proof}

\begin{corollary}
\label{cor:mixed_reg_u0yk}
Applying Proposition~\ref{prop:reg_CorrY} with $v=u^0\in W^{1,p}(D)$, we have 	$\mean{u^0\otimes Y^{\otimes k}}\in\Cspacemix{n,\gamma}{\bar D^{\times k}}{W^{1,p}(D)}$.
\end{corollary}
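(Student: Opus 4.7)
The plan is to observe that this corollary is an immediate specialization of Proposition~\ref{prop:reg_CorrY} once the correct Banach space $V$ and element $v\in V$ are identified. Taking $V = W^{1,p}(D)$ with the natural Sobolev norm is the only choice consistent with the statement; the element $v = u^0$ must then be shown to belong to $V$. This is exactly the content of the well-posedness result recalled in equations~\eqref{eq:u0}--\eqref{eq:u0_bound}: the zeroth order correction $u^0$ is the unique weak solution in $W^{1,p}_0(D)\subset W^{1,p}(D)$ of the deterministic Laplace--Dirichlet problem with right hand side $f\in L^p(D)$, and it satisfies $\norm{u^0}{W^{1,p}(D)}\leq C\norm{f}{L^p(D)}$.

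Next, I would check the hypotheses on $Y$. The standing assumption of Section~\ref{sec:analytical_derivation_Eu} is $Y\in L^s(\Omega,\Holder{0,\gamma}{\bar D})$ for all $1\leq s<+\infty$, which matches the hypothesis of Proposition~\ref{prop:reg_CorrY} with $n=0$ (the more general $n\geq 0$ version applies whenever $Y$ has additional H\"older regularity of integer order $n$, e.g.\ when $Cov_Y$ is sufficiently regular). Since $Y$ is centered Gaussian, all conditions of the proposition are satisfied.

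Applying Proposition~\ref{prop:reg_CorrY} with $V = W^{1,p}(D)$ and $v=u^0$ then yields directly
\[
	\mean{u^0\otimes Y^{\otimes k}} \in \Cspacemixpedex{n,\gamma}{\vet y}{\bar D^{\times k}}{W^{1,p}_x(D)},
\]
together with the quantitative norm identity
\[
	\norm{\mean{u^0\otimes Y^{\otimes k}}}{\Cspacemixpedex{n,\gamma}{\vet y}{\bar D^{\times k}}{W^{1,p}_x(D)}}
	= \norm{u^0}{W^{1,p}(D)}\,\norm{\mean{Y^{\otimes k}}}{\mcC^{n,\gamma,mix}(\bar D^{\times k})},
\]
from equation~\eqref{eq:norm_CorrY}. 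There is no real obstacle here: the only verification is the membership $u^0\in W^{1,p}(D)$, which is already provided by~\eqref{eq:u0_bound} under the standing assumption $f\in L^p(D)$, so the corollary follows in a single line. The value of the statement is not in its proof but in recording the fact that the recursion~\eqref{eq:recursion} is initialized by an input which lives in the mixed H\"older/Sobolev class, in the precise form that will be propagated through the recursion in Section~\ref{sec:well-posedness_recursion}.
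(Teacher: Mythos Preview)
Your proposal is correct and matches the paper exactly: the corollary has no separate proof in the paper because the statement itself reads ``Applying Proposition~\ref{prop:reg_CorrY} with $v=u^0\in W^{1,p}(D)$,'' so the only thing to verify is $u^0\in W^{1,p}(D)$, which you correctly source from~\eqref{eq:u0}--\eqref{eq:u0_bound}. Your additional remarks on the hypotheses on $Y$ and the norm identity~\eqref{eq:norm_CorrY} are accurate elaborations but not strictly required.
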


\subsection{Well-posedness and regularity of the recursion}
\label{sec:well-posedness_recursion}

To lighten the notation, from now on we denote the $k$-th order correction $\mean{u^k}$ with $\corr{k}$, and the $(i+1)$-points correlation $\mean{u^{k-i}\otimes Y^{\otimes i}}(x,y_1,\ldots,y_i)$ with $\corr{k-i,i}$.

\begin{theorem}[Well-posedness of the recursion]
\label{th:wellposedness}
Let $D\subset \R^d$, such that $\partial D\in C^1$, and $Y\in\Lspace{s}{\Holder{0,\gamma}{\bar D}}$ for all $1\leq s<+\infty$. Let $f\in L^p(D)$ for $p>\frac{2d}{\gamma}$, and $1<q<\infty$ such that $\frac{1}{p}+\frac{1}{q}=1$. 
Then, for any $i=0,\ldots,k-1$, the Laplace-Dirichlet problem: Given $\corr{k-i-j,i+j}$ for $j=1,\ldots,k-i$, find $w(\cdot,\vet y)\in W^{1,p}_{0,x}(D)$ such that, for all $\vet y:=(y_1,\ldots,y_i)\in D^{\times i}$,
\begin{equation}
  \label{eq:prb_k}
    \int_{D} 
    \left(\nabla\otimes\Id^{\otimes i}\right)w(x,\vet y)
    \cdot
    \nabla v(x)\ \ddx
    = \mathcal L_{\vet y}(v)
    \quad\forall v\in W^{1,q}_0(D)
\end{equation}
has a unique solution for all $i=k,k-1,\ldots,0$, with 
\begin{equation}
\label{eq:wellposedness_bound}
    \norm{w(\cdot,\vet y)}{W^{1,p}_{0,x}(D)}
    \leq C \norm{\mathcal L_{\vet y}}{(W^{1,q}_0)^\star},
\end{equation}
where $C>0$ is independent of $\vet y$, and the linear form $\mathcal L_{\vet y}:W^{1,q}_{0}(D)\rightarrow\R$ is defined as
\begin{equation}
    \label{eq:L_form}
    \mathcal L_{\vet y}(v):=
    -\sum_{j=1}^{k-i}\bc{k-i}{j}
    \int_{D} 
    \Trdiaggen{1}{j+1}\Grad_x\corr{k-i-j,i+j}(x,\vet y)
    \cdot
    \nabla v(x)\ \ddx.
\end{equation}
Moreover, the unique solution belongs to the space $\Cspacemixpedex{0,\gamma}{y_1,\ldots,y_i}{\bar D^{\times i}}{W^{1,p}_{0,x}(D)}$ and coincides with $\corr{k-i,i}$.
\end{theorem}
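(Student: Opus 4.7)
My plan is a descending induction on $i$, starting from the base case $i=k$ --- where no equation is to be solved, since $\corr{0,k}=u^0\otimes\mean{Y^{\otimes k}}$ is the input datum, whose regularity $\corr{0,k}\in\Cspacemix{0,\gamma}{\bar D^{\times k}}{W^{1,p}_{0,x}(D)}$ is furnished by Corollary~\ref{cor:mixed_reg_u0yk} --- and stepping down to $i=0$. At the inductive step I assume that for every $j=1,\ldots,k-i$ the higher-order correlation $\corr{k-i-j,i+j}$ already belongs to $\Cspacemix{0,\gamma}{\bar D^{\times(i+j)}}{W^{1,p}_{0,x}(D)}$ and show that~\eqref{eq:prb_k} admits a unique solution $w$ with the same type of regularity, which I then identify with $\corr{k-i,i}$.

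\textbf{Pointwise existence and uniqueness.} Fix $\vet y\in D^{\times i}$. For each $j=1,\ldots,k-i$, the gradient $\nabla_x\corr{k-i-j,i+j}$ lies in $\Cspacemix{0,\gamma}{\bar D^{\times(i+j)}}{(L^p_x(D))^d}$ by the inductive hypothesis. Proposition~\ref{prop:traccia_Wp}, applied componentwise with $m=n=0$ and $V=L^p(D)$ --- precisely where the hypothesis $p>2d/\gamma$ is used --- shows that the diagonal trace $\Trdiaggen{1}{j+1}\nabla_x\corr{k-i-j,i+j}(\cdot,\vet y)$ belongs to $(L^p(D))^d$, with norm bounded by $C_{tr}^{j}$ times the mixed H\"older norm of $\corr{k-i-j,i+j}$, uniformly in $\vet y$. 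H\"older's inequality in $x$ then yields $\mathcal L_{\vet y}\in(W^{1,q}_0(D))^\star$ with a $\vet y$-uniform bound. For each $\vet y$, equation~\eqref{eq:prb_k} is therefore a standard Laplace-Dirichlet problem in the $W^{1,p}_0$-setting, and the theory of~\cite[Chapter 7]{Simader1972} (already invoked for~\eqref{eq:u0}) delivers a unique $w(\cdot,\vet y)\in W^{1,p}_{0,x}(D)$ together with~\eqref{eq:wellposedness_bound}.

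\textbf{Mixed H\"older regularity in $\vet y$.} The bilinear form on the left of~\eqref{eq:prb_k} is independent of $\vet y$, so for every admissible increment $\vet h$ the mixed difference quotient $D^{\gamma,mix}_{\vet i(\vet h)}w(\cdot,\vet y)$ solves the same Laplace-Dirichlet problem with right-hand side $D^{\gamma,mix}_{\vet i(\vet h)}\mathcal L_{\vet y}$, the difference quotient commuting through the integration in $x$, through $\nabla_x$, and through $\Trdiaggen{1}{j+1}$, since these all act on variables disjoint from $\vet y$. Applying~\eqref{eq:wellposedness_bound} to the difference quotient and then the mixed-H\"older estimate~\eqref{eq:traccia_Wp_Cnmix} of Proposition~\ref{prop:traccia_Wp} to its right-hand side produces a bound uniform in $\vet y$ and $\vet h$, finite by the inductive hypothesis; taking suprema over $\vet y$, $\vet h$ and the index set $\vet i(\vet h)$ yields $w\in\Cspacemix{0,\gamma}{\bar D^{\times i}}{W^{1,p}_{0,x}(D)}$.

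\textbf{Identification and main obstacle.} The formal derivation of~\eqref{eq:recursion} carried out in~\cite{Bonizzoni2013,Bonizzoni2016} is fully justified in this functional setting, because every correlation involved has now been shown to have the regularity presupposed there. Consequently $\corr{k-i,i}$ itself satisfies~\eqref{eq:prb_k}, and by the uniqueness established above $w=\corr{k-i,i}$, closing the induction. I expect the delicate step to be the interplay between the diagonal trace and the mixed H\"older norm: the right-hand side requires reading the $(i+j+1)$-point correlation as a $W^{1,p}_{0,x}$-valued mixed H\"older map in its $i+j$ parametric variables, collapsing the first $j+1$ of them onto the diagonal while preserving sharp H\"older control in the remaining $i$. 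Proposition~\ref{prop:traccia_Wp} is precisely the tool making this possible, and its combination with Simader's $W^{1,p}$ theory is what allows the inductive step to close.
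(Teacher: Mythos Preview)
Your proposal is correct and follows essentially the same route as the paper's own proof: descending induction along the recursion, with Proposition~\ref{prop:traccia_Wp} controlling the diagonal trace of $\nabla_x\corr{k-i-j,i+j}$ in $L^p$, Simader's $W^{1,p}$ theory giving pointwise well-posedness plus the bound~\eqref{eq:wellposedness_bound}, and mixed H\"older regularity obtained by applying the same well-posedness estimate to mixed difference quotients of~\eqref{eq:prb_k}. The only cosmetic difference is that the paper first writes out the case $k=2$, $i=1$ in full before doing the general inductive step, whereas you start directly from the trivial base $i=k$ provided by Corollary~\ref{cor:mixed_reg_u0yk}.
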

  
\begin{proof}
We prove the theorem by induction. Let $k=2$ and $i=1$. The problem we handle with is: given $\corr{0,2}$, find $w(\cdot,y)\in W^{1,p}_{0,x}(D)$ s.t., for all $y\in D$,
\begin{equation}
\label{eq:prb_k2}
    \int_{D} \left(\nabla\otimes\Id^{\otimes i}\right)w(x,y)
    \cdot\nabla v(x)\ \ddx
    = \mathcal L_{y}(v)
    \quad\forall v\in W^{1,q}_0(D),
\end{equation}
where $\mathcal L_{y}(v):=-\int_{D} \Trdiaggen{1}{2}\Grad_x\corr{0,2}(x,y)\cdot\nabla v(x)\ \ddx$. 

\textbf{Step 1: well-posedness of problem~\eqref{eq:prb_k2}}\\
We have to show that $\mathcal L_y\in (W^{1,q}_{0})^\star$. 
Since $\partial D\in C^1$ and $f\in L^p(D)$, then $u^0\in W^{1,p}(D)$, as stated in Section~\ref{sec:analytical_derivation_Eu}. 
Applying Proposition~\ref{prop:reg_CorrY} with $n=0$, we have $\Grad_x\corr{0,2}\in \Cspacemixpedex{0,\gamma}{y_1,y_2}{\overline{D\times D}}{L^p(D)}$. 
Applying Proposition~\ref{prop:traccia_Wp} with $n=0$, we get $\Trdiag{2} \Grad_x\corr{0,2}\in \Cspacepedex{0,\gamma}{y_2}{D}{L^p_x(D)}$, and, in particular, 
$$
C_{\mathcal L}:=\sup_{y_2\in D}
\norm{\Trdiag{2} \Grad_x\corr{0,2}}{L^p_x(D)}<\infty.
$$ 
Hence, by the H\"older inequality, we have
\begin{equation*}
    \abs{\mathcal L_y(v)}
    \leq \norm{\Trdiag{2} \Grad_x\corr{0,2}}{L^p_x(D)}
    \norm{\Grad v}{L^q_x(D)}
    \leq C_{\mathcal L} \norm{v}{W^{1,q}(D)},
\end{equation*}
so that $\mathcal L_y\in (W^{1,q}_{0})^\star$ for all $y\in D$. Thanks to~\cite[Chapter 7]{Simader1972}, we conclude that problem~\eqref{eq:prb_k2} has a unique solution $w(\cdot,y)\in W^{1,p}_0(D)$ for every $y\in D$. Moreover, there exists a positive constant $C=C(p,d,D)$ such that
\begin{equation*}
  \norm{w(\cdot,y)}{W^{1,p}_0(D)}\leq C\norm{\mathcal L_y}{(W^{1,q}_0(D))^\star}
  \leq C\ C_{\mcL}.
\end{equation*}

\textbf{Step 2: H\"older regularity of $w(x,\cdot)$}\\
Let us consider the difference between problem~\eqref{eq:prb_k2} in $y+h$ and in $y$:
\begin{align}
  \nonumber
  &\int_D (\nabla\otimes \Id) (w(x,y+h)-w(x,y))\cdot\nabla v(x)\ dx\\
  \label{eq:prb_k2_yh}
  &\quad = \int_D (\Trdiaggen{1}{2} \Grad_x\corr{0,2}(x,y+h)
    -\Trdiaggen{1}{2} \Grad_x\corr{0,2}(x,y))\cdot\nabla v(x) \ dx,
\end{align}
for all $v\in W^{1,q}_0(D)$. Following the same procedure as in Step 1, we conclude that problem~\eqref{eq:prb_k2_yh} is well-posed, and
\begin{equation}
  \label{eq:prb_k2_yh_bound}
  \norm{w(\cdot,y+h)-w(\cdot,y)}{W^{1,p}_0(D)}
  \leq C\norm{\mathcal L_{y+h}-\mathcal L_y}{(W^{1,q}_0(D))^\star}.
\end{equation}
Hence, we have
\begin{align*}
 &\seminorm{w}{\Cspacepedex{0,\gamma}{y}{\bar D}{W^{1,p}_0(D)}}
  = \sup_{y,h} \frac{1}{\norm{h}{}^{\gamma}} \norm{w(\cdot,y+h)-w(\cdot,y)}{W^{1,p}_0(D)}\\
 & \quad \stackrel{\eqref{eq:prb_k2_yh_bound}}{\leq }
  C \sup_{y,h} \frac{1}{\norm{h}{}^{\gamma}}
    \norm{\mathcal L_{y+h}-\mathcal L_y}{(W^{1,q}_0(D))^\star}\\
 & \quad = 
 C \sup_{y,h} \frac{1}{\norm{h}{}^{\gamma}}
  \sup_{\substack{v\in W^{1,q}_0(D)\\\norm{v}{W^{1,q}_0(D)}=1}}
  \bigg| \int_D (\Trdiaggen{1}{2} \Grad_x\corr{0,2}(x,y+h)
	 -\Trdiaggen{1}{2} \Grad_x\corr{0,2})\cdot\nabla v(x) \ dx \bigg|\\
 & \quad \leq 
 C \sup_{y,h} \frac{1}{\norm{h}{}^{\gamma}}
  \norm{ \Trdiaggen{1}{2} \Grad_x\corr{0,2}(\cdot,y+h) 
  	- \Trdiaggen{1}{2} \Grad_x\corr{0,2}(\cdot,y) }{ L^p_x(D) } \\
 & \quad = C 
 \sup_{y,h} 
  \norm{ D^{\gamma}_{y,h} \Trdiaggen{1}{2} \Grad_x\corr{0,2}(\cdot,y) }{ L^p_x(D) }
	\leq C \seminorm{ \Trdiaggen{1}{2} \corr{0,2} }
    { \Cspacepedex{0,\gamma}{y}{\bar D}{W^{1,p}_0(D)} }\\
 & \quad \stackrel{\eqref{eq:traccia_Wp_Cnmix}}{\leq }
 	C C_{tr} \norm{\corr{0,2}}{ \Cspacemixpedex{0,\gamma}{}{\overline{D\times D}}{W^{1,p}_0(D)} } < +\infty,
\end{align*}
so that $w\in \Cspacepedex{0,\gamma}{y}{\bar D}{W^{1,p}_{0,x}(D)}$.
Moreover, since $\corr{1,1}$ solves problem~\eqref{eq:prb_k2} for every  $y\in D$, then $\corr{1,1}\in\Cspacepedex{0,\gamma}{y}{\bar D}{W^{1,p}_0(D)}$ is the unique solution of~\eqref{eq:prb_k2}.

We perform now the induction step. Let $k\geq 2$ and $0\leq i\leq k -1$ be fixed, and assume that 
$\corr{k-i-j,i+j}
\in\Cspacepedex{0,\gamma,mix}{y_1,\ldots,y_{i+j}}{\bar D^{\times (i+j)}}{W^{1,p}_{0}(D)}$,
for $j=1,\ldots,k-i$. 

\textbf{Step 1: well-posedness of problem~\eqref{eq:prb_k}}\\
We have to show that $\mathcal L_{\vet y}$ as in~\eqref{eq:L_form} is in $(W^{1,q}_{0})^\star$. 
Since $\corr{k-i-j,i+j}\in\Cspacemixpedex{0,\gamma}{y_1,\ldots,y_{i+j}}{\bar D^{\times (i+j)}}{W^{1,p}_{0}(D)}$, then 
$\Trdiaggen{1}{j+1}\Grad_x\corr{k-i-j,i+j}\in 
\Cspacemixpedex{0,\gamma}{y_1,\ldots,y_{i}}{\bar D^{\times i}}{L^p(D)}$, and, in particular,
$$
C_{\mathcal L,j}:=\sup_{y_1,\ldots,y_{i}\in D^{\times i}}
\norm{\Trdiaggen{1}{j+1}\Grad_x\corr{k-i-j,i+j}}{L^p_x(D)}<\infty.
$$
Hence, by the H\"older inequality, we have $\abs{\mathcal L_{\vet y}(v)}\leq C_{\mcL} \norm{v}{W^{1,q}(D)}$, with $C_\mcL:=\sum_{j=1}^{k-i}\bc{k-i}{j}C_{\mathcal L,j}$,
so that $\mathcal L_{\vet y}\in (W^{1,q}_{0})^\star$. Thanks to~\cite[Chapter 7]{Simader1972}, we conclude that problem~\eqref{eq:prb_k2} has a unique solution $w(\cdot,\vet y)\in W^{1,p}_0(D)$ for a.e. $\vet y\in D^{\times i}$. Moreover, it holds
\begin{equation*}
  \norm{w(\cdot,\vet y)}{W^{1,p}_0(D)}\leq C\norm{\mathcal L_{\vet y}}{(W^{1,q}_0(D))^\star}
  \leq C C_{\mcL}.
\end{equation*}

\textbf{Step 2: H\"older regularity of $w(x,\cdot)$}\\
By considering the problem solved by $D^{\gamma,mix}_{\vet i}w(x,\vet y)$, we have
\begin{align}
	\nonumber
  &  \norm{ D^{\gamma,mix}_{\vet i}w(\cdot,\vet y) }{W^{1,p}_0(D)}\\
  \nonumber
  & \quad  \leq C \sup_{\substack{v\in W^{1,q}_0(D)\\\norm{v}{W^{1,q}_0(D)}=1}}
	\abs{\sum_{j=1^{k-i}}\bc{k-i}{j} 
		 \int_D D^{\gamma,mix}_{\vet i} \Trdiaggen{1}{j+1} \Grad \corr{k-i-j,i+j}\cdot\Grad v dx }\\
	\label{eq:prb_k_yh_bound}
  & \quad\leq
  C \sum_{j=1}^{k-i}\bc{k-i}{j} 
	\norm{D^{\gamma,mix}_{\vet i} \Trdiaggen{1}{j+1} \Grad \corr{k-i-j,i+j}(\cdot,\vet y)}{L^p}.
\end{align}
Hence, we have
\begin{align*}
 & \seminorm{w}{\Cspacemixpedex{0,\gamma}{\vet y}{\bar D^{\times i}}{W^{1,p}_0(D)}}
  = \max_{\ell=1,\ldots,i} \sup_{\vet y,\vet h,\norm{\vet h}{0}=\ell}
    \norm{ D^{\gamma,mix}_{\vet i} w(\cdot,\vet y) } { W^{1,p}_0(D) } \\
 &\quad\stackrel{\eqref{eq:prb_k_yh_bound}}{\leq}
	C \max_{\ell=1,\ldots,i} \sup_{\vet y,\vet h,\norm{\vet h}{0}=\ell}
	\sum_{j=1}^{k-i}\bc{k-i}{j} 
		\norm{D^{\gamma,mix}_{\vet i} \Trdiaggen{1}{j+1} 
		\Grad \corr{k-i-j,i+j}(\cdot,\vet y)}{L^p}\\
 &\quad\leq C \sum_{j=1}^{k-i}\bc{k-i}{j}
 		\seminorm{\Trdiaggen{1}{j+1} \Grad \corr{k-i-j,i+j}}{\Cspacepedex{0,\gamma,mix}{\vet y}{\bar D^{\times i}}{L^p(D)}}\\
  &\quad\leq C \sum_{j=1}^{k-i}\bc{k-i}{j} C_{tr}^j 
		\norm{ \corr{k-i-j,i+j}}{\Cspacepedex{0,\gamma,mix}{y1,\ldots,y_{i+j}}{\bar D^{\times(i+j)}}{W^{1,p}(D)}}<+\infty.
\end{align*}
In particular, since $\corr{k-i,i}$ solves problem~\eqref{eq:prb_k} for a.e. $\vet y\in D^{\times i}$, then $\corr{k-i,i}\in\Cspacemixpedex{0,\gamma}{\vet y}{\bar D^{\times i}}{W^{1,p}_0(D)}$ is the unique solution of~\eqref{eq:prb_k}.
\end{proof}

\begin{theorem}[Regularity of the recursion]
\label{thm:regularity}
Let $D\subset\R^d$ such that $\partial D\in C^{2+r}$, $r\geq 0$.
Let $f\in W^{r,p}(D)$, and $Y\in\Lspace{s}{\Holder{n,\gamma}{\bar D}}$, for all $1\leq s<\infty$ and $n\geq r+1$. 
Then the correlation $\corr{k-i,i}\in\Cspacemixpedex{n,\gamma}{y_1,\ldots,y_i}{\bar D^{\times i}}{ W^{2+r,p}(D)\cap W^{1,p}_0(D)}$ for all $i=k,k-1,\ldots,0$. Moreover, there exists a positive constant $C_{reg}$ independent of $\vet y=(y_1,\ldots,y_i)$, such that
\begin{equation}
	\label{eq:regularity}
	\norm{\corr{k-i,i}(\cdot,\vet y)}{ W^{2+r,p}(D)}
	\leq C_{reg} \norm{\mcL_{\vet y}}{(W^{r,q})^*},
\end{equation}
where $\mcL_{\vet y}$ has been introduced in~\eqref{eq:L_form}.
\end{theorem}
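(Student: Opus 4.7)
\noindent\emph{Proof proposal.}
The plan is to mirror the inductive structure of Theorem~\ref{th:wellposedness}, upgrading at each stage to the higher--regularity setting by invoking classical $L^p$-elliptic regularity for the Laplace--Dirichlet problem. For each fixed $k$ we induct backwards on $i$, from $i=k$ down to $i=0$. The base case $i=k$ is immediate: since $f\in W^{r,p}(D)$ and $\partial D\in C^{2+r}$, classical elliptic regularity applied to~\eqref{eq:u0} gives $u^0\in W^{2+r,p}(D)\cap W^{1,p}_0(D)$, and Proposition~\ref{prop:reg_CorrY} applied with $V=W^{2+r,p}(D)\cap W^{1,p}_0(D)$ and exponent $n$ yields $\corr{0,k}=u^0\otimes\mean{Y^{\otimes k}}\in\Cspacemixpedex{n,\gamma}{\vet y}{\bar D^{\times k}}{V}$.

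For the inductive step, fix $0\leq i<k$ and assume the conclusion for every $\corr{k-i-j,i+j}$, $j=1,\ldots,k-i$. The inductive hypothesis places $\Grad_x\corr{k-i-j,i+j}$ in $\Cspacemixpedex{n,\gamma}{\cdot}{\bar D^{\times(i+j)}}{W^{r+1,p}(D)}$. Since $n\geq r+1$, Proposition~\ref{prop:traccia_Wp} with $m=r+1$ then gives
\[
\Trdiaggen{1}{j+1}\Grad_x\corr{k-i-j,i+j}\in\Cspacemixpedex{n,\gamma}{\vet y}{\bar D^{\times i}}{W^{r+1,p}(D)},
\]
with a bound uniform in $\vet y$. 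Integrating by parts against $v\in W^{1,q}_0(D)$ (the boundary terms vanish), the linear form $\mcL_{\vet y}$ in~\eqref{eq:L_form} is represented by a function $F(\cdot,\vet y)\in W^{r,p}(D)$, so that $\mcL_{\vet y}\in(W^{r,q}_0(D))^\star$ with norm uniform in $\vet y$. Classical elliptic regularity for the Laplace--Dirichlet problem on a $C^{2+r}$ domain then upgrades the unique solution $w(\cdot,\vet y)=\corr{k-i,i}(\cdot,\vet y)$ provided by Theorem~\ref{th:wellposedness} to $W^{2+r,p}(D)\cap W^{1,p}_0(D)$, yielding the bound~\eqref{eq:regularity}.

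To promote this pointwise-in-$\vet y$ regularity to mixed H\"older membership, I would mimic Step~2 in the proof of Theorem~\ref{th:wellposedness}. The partial derivatives $\der{\bm\alpha}_{\vet y}$ (for multi-indices $\bm\alpha$ with $\abs{\alpha_\ell}\leq n$) and the mixed difference quotients $D^{\gamma,mix}_{\vet i}$ act only on $\vet y$ and hence commute with $\Grad_x$ and with the $x$-integration in~\eqref{eq:prb_k}. Therefore $D^{\gamma,mix}_{\vet i}\der{\bm\alpha}_{\vet y}w$ again solves a Laplace--Dirichlet problem, whose right-hand side is controlled in $(W^{r,q}_0)^\star$ by the corresponding seminorms of $\Trdiaggen{1}{j+1}\Grad_x\corr{k-i-j,i+j}$ in $\Cspacemixpedex{n,\gamma}{\cdot}{\bar D^{\times i}}{W^{r+1,p}(D)}$, which by Proposition~\ref{prop:traccia_Wp} and the inductive hypothesis are finite. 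Another application of elliptic regularity then produces the mixed-H\"older-in-$\vet y$, $W^{2+r,p}$-in-$x$ bound.

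The main obstacle is the bookkeeping of regularity indices, and in particular the tightness of the hypothesis $n\geq r+1$: one $x$-derivative of a $W^{2+r,p}$ map produces only a $W^{r+1,p}$ map, so Proposition~\ref{prop:traccia_Wp} has to be invoked with $m=r+1$, which forces $n\geq r+1$. A second delicate point is ensuring that, after taking up to $n$ derivatives in $\vet y$, the trace hypothesis of Proposition~\ref{prop:traccia_Wp} is still satisfied uniformly; this is built into the definition~\eqref{eq:mixed_holder_n_V_seminorm} but has to be tracked carefully through each step of the recursion.
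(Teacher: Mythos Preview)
Your proposal is correct and follows essentially the same approach as the paper: backward induction on $i$ starting from $\corr{0,k}$ via elliptic regularity for $u^0$ plus Proposition~\ref{prop:reg_CorrY}, then at each step applying Proposition~\ref{prop:traccia_Wp} with $m=r+1$ to the traced gradient and invoking $L^p$ elliptic regularity (Simader, Chapter~9), with the mixed H\"older membership obtained exactly as in Step~2 of Theorem~\ref{th:wellposedness}. If anything, you spell out more carefully than the paper the integration-by-parts representation of $\mcL_{\vet y}$ and the role of the constraint $n\geq r+1$.
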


\begin{proof}
We prove the theorem by induction. Let $k=2$ and $i=1$.
Since $f\in W^{r,p}(D)$, we have $u^0\in W^{1,p}_0(D)\cap W^{2+r,p}(D)$ (see~\cite[Chapter 9]{Simader1972}).
Using the assumption $Y\in\Lspace{s}{\Holder{n,\gamma}{\bar D}}$ and Proposition~\ref{prop:reg_CorrY}, we have 
$$
\corr{0,2}\in \Cspacemixpedex{n,\gamma}{y_1,y_2}{\overline{D\times D}}{W^{1,p}_0(D)\cap W^{2+r,p}(D)},$$ 
so that $\Grad_x\corr{0,2}\in \Cspacemixpedex{n,\gamma}{y_1,y_2}{\overline{D\times D}}{W^{1+r,p}(D)}.
$ 
Applying Proposition~\ref{prop:traccia_Wp}, we have $\Trdiag{2} \Grad\corr{0,2}\in \Cspacepedex{n,\gamma}{y}{\bar D}{W^{1+r,p}(D)}$. 
Following the same reasoning as in the proof of Theorem~\ref{th:wellposedness}, we have that $\corr{1,1}\in \Cspacepedex{n,\gamma}{y}{\bar D}{W^{2+r,p}(D)}$. Finally, since we have already shown that $\corr{1,1}\in \Cspacepedex{n,\gamma}{y}{\bar D}{W^{1,p}_0(D)}$, we conclude the thesis.

We perform now the induction step. Assume that the correlation 
$$
\corr{k-i-j,i+j}
\in\Cspacepedex{n,\gamma,mix}{y_1,\ldots,y_{i+j}}{\bar D^{\times (i+j)}}{W^{1,p}_0(D)\cap W^{r+2,p}(D)},
$$
for $j=1,\ldots,k-i$. 
Applying Proposition~\ref{prop:traccia_Wp}, we have
$$
\Trdiag{j+1} \Grad\corr{k-i-j,i+j}
\in\Cspacepedex{n,\gamma,mix}{y_1,\ldots,y_{i}}{\bar D^{\times i}}{W^{r+1,p}(D)}.
$$
Following the same reasoning as in the proof of Theorem~\ref{th:wellposedness}, we conclude that
$\corr{k-i,i}\in \Cspacemixpedex{n,\gamma}{y_1\ldots,y_i}{\bar D^{\times i}}{W^{2+r,p}(D)}$. 
Finally, since we have already shown that 
\\$\corr{k-i,i}\in \Cspacemixpedex{n,\gamma}{y_1\ldots,y_i}{\bar D^{\times i}}{W^{1,p}_0(D)}$, we conclude the thesis.

Finally, the upper bound~\eqref{eq:regularity} follows from~\cite[Chapter 9]{Simader1972}, observing that $\mcL_{\vet y}\in (W^{1+r,p})^*$.
\end{proof}

\begin{proposition}
\label{prop:bound_recursion}
Under the assumptions of Theorem~\ref{thm:regularity}, it holds
\begin{equation}
	\label{eq:bound_recursion}
	\norm{\corr{k-i,i}}{\Cspacemixpedex{n,\gamma}{y_1,\ldots,y_i}{\bar D^{\times i}}{W^{2+r,p}(D)}}
	\leq \lambda_{k-i}
	\norm{\corr{0,k}}{\Cspacemixpedex{n,\gamma}{y_1,\ldots,y_k}{\bar D^{\times k}}{W^{2+r,p}(D)}}
\end{equation}
for all $i\leq k$, where the coefficients $\{\lambda_{k-i}\}_{i=1}^k$ are defined by recursion as $\lambda_0:=1$ and $\lambda_{k-i}:=C_{reg} \sum_{j=1}^{k-i}\bc{k-i}{j} C_{tr}^{j}\,\lambda_{k-i-j}$ for $i<k$, the constants $C_{reg},\,C_{tr}$ being introduced in Theorem~\ref{thm:regularity} and Proposition~\ref{prop:traccia_Wp}, respectively.
\end{proposition}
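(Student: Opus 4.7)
The plan is to proceed by induction on $k-i$, i.e., starting from $i=k$ and decreasing $i$ down to $0$.

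For the base case $i=k$ we have $k-i=0$ and $\corr{0,k}$ on both sides of~\eqref{eq:bound_recursion}; the inequality is trivial with $\lambda_0=1$.

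For the induction step, fix $i<k$ and suppose that
\begin{equation*}
	\norm{\corr{k-i-j,i+j}}{\Cspacemixpedex{n,\gamma}{y_1,\ldots,y_{i+j}}{\bar D^{\times (i+j)}}{W^{2+r,p}(D)}}
	\leq \lambda_{k-i-j}
	\norm{\corr{0,k}}{\Cspacemixpedex{n,\gamma}{y_1,\ldots,y_k}{\bar D^{\times k}}{W^{2+r,p}(D)}}
\end{equation*}
holds for all $j=1,\ldots,k-i$. Theorem~\ref{thm:regularity} applied to the recursive problem~\eqref{eq:prb_k} satisfied by $\corr{k-i,i}$ yields, for each $\vet y\in D^{\times i}$, the bound $\norm{\corr{k-i,i}(\cdot,\vet y)}{W^{2+r,p}(D)}\leq C_{reg}\norm{\mcL_{\vet y}}{(W^{r,q})^*}$. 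Using the definition~\eqref{eq:L_form} of $\mcL_{\vet y}$ and the H\"older inequality, we obtain
\begin{equation*}
	\norm{\mcL_{\vet y}}{(W^{r,q})^*}
	\leq \sum_{j=1}^{k-i}\bc{k-i}{j}
	\norm{\Trdiaggen{1}{j+1}\Grad_x\corr{k-i-j,i+j}(\cdot,\vet y)}{W^{r+1,p}(D)}.
\end{equation*}

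To propagate this pointwise (in $\vet y$) estimate to a bound in the mixed H\"older norm, one repeats the same argument applied to mixed difference quotients $D^{\gamma,mix}_{\vet i}$ and to derivatives $\der{\bm\alpha}$ with respect to $\vet y$ up to order $n$, exactly as was done in Step~2 of the proof of Theorem~\ref{th:wellposedness}. This shows
\begin{equation*}
	\norm{\corr{k-i,i}}{\Cspacemixpedex{n,\gamma}{\vet y}{\bar D^{\times i}}{W^{2+r,p}(D)}}
	\leq C_{reg}\sum_{j=1}^{k-i}\bc{k-i}{j}
	\norm{\Trdiaggen{1}{j+1}\Grad_x\corr{k-i-j,i+j}}
	{\Cspacemixpedex{n,\gamma}{\vet y}{\bar D^{\times i}}{W^{r+1,p}(D)}}.
\end{equation*}
Now apply the trace estimate~\eqref{eq:traccia_Wp_Cnmix} of Proposition~\ref{prop:traccia_Wp} to each term of the sum, which contributes a factor $C_{tr}^j$, and then invoke the inductive hypothesis on $\corr{k-i-j,i+j}$. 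The result is
\begin{equation*}
	\norm{\corr{k-i,i}}{\Cspacemixpedex{n,\gamma}{\vet y}{\bar D^{\times i}}{W^{2+r,p}(D)}}
	\leq \left(C_{reg}\sum_{j=1}^{k-i}\bc{k-i}{j}C_{tr}^{j}\lambda_{k-i-j}\right)
	\norm{\corr{0,k}}{\Cspacemixpedex{n,\gamma}{\vet y}{\bar D^{\times k}}{W^{2+r,p}(D)}},
\end{equation*}
which is exactly~\eqref{eq:bound_recursion} in view of the recursive definition of $\lambda_{k-i}$.

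The main technical point is ensuring that everything is controlled in the mixed H\"older norm and not merely pointwise in $\vet y$; however, since both Theorem~\ref{thm:regularity} and Proposition~\ref{prop:traccia_Wp} are already formulated in this functional framework, this reduces to carefully rewriting the argument of Theorem~\ref{th:wellposedness} for arbitrary derivatives and mixed increments, with no new analytical difficulty.
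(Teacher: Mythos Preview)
Your proof is correct and follows essentially the same approach as the paper: induction from $i=k$ downward, applying the regularity estimate~\eqref{eq:regularity} together with the trace bound~\eqref{eq:traccia_Wp_Cnmix} to obtain the recursive inequality, and then invoking the inductive hypothesis. The paper's version is slightly terser (it passes directly to the mixed H\"older norm without first writing the pointwise-in-$\vet y$ bound), but the content is identical.
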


\begin{proof}
Let $k$ be fixed. We prove the Theorem by induction on $i$.
If $i=k$, bound~\eqref{eq:bound_recursion} holds as an equality.
Let now $i<k$ fixed. By induction, we assume
\begin{equation}
	\label{eq:ind_ass}
	\norm{\corr{k-\ell,\ell}}
		{\Cspacemixpedex{n,\gamma}{y_1,\ldots,y_\ell}{\bar D^{\times \ell}}{W^{2+r,p}_0(D)}}
	\leq \lambda_{k-\ell}
	\norm{\corr{0,k}}{\Cspacemixpedex{n,\gamma}{y_1,\ldots,y_k}{\bar D^{\times k}}{W^{2+r,p}_0(D)}},
\end{equation}
for all $i+1\leq \ell\leq k-1$. 
Thanks to~\eqref{eq:regularity} and Proposition~\ref{prop:traccia_Wp}, it holds:
\begin{align*}
	& \norm{\corr{k-i,i}}
		{\Cspacemixpedex{n,\gamma}{y_1,\ldots,y_i}{\bar D^{\times i}}{W^{2+r,p}_0(D)}}\\
	& \quad \leq C_{reg} \sum_{j=1}^{k-i}\bc{k-i}{j} C_{tr}^j
	\norm{\corr{k-i-j,i+j}}
	{\Cspacemixpedex{n,\gamma}{y_1,\ldots,y_{i+j}}{\bar D^{\times(i+j)}}{W^{2+r,p}_0(D)}}.
\end{align*}
Using the assumption~\eqref{eq:ind_ass}, we have
\begin{align*}
	& \norm{\corr{k-i,i}}
	{\Cspacemixpedex{n,\gamma}{y_1,\ldots,y_i}{\bar D^{\times i}}{W^{2+r,p}_0(D)}}\\
	& \quad 
	\leq C_{reg} \sum_{j=1}^{k-i}\bc{k-i}{j} C_{tr}^j\, \lambda_{k-i-j} 
		\norm{\corr{0,k}}
		{\Cspacemixpedex{n,\gamma}{y_1,\ldots,y_k}{\bar D^{\times k}}{W^{2+r,p}_0(D)}}\\
	& \quad 
	= \lambda_{k-i}\norm{\corr{0,k}}
		{\Cspacemixpedex{n,\gamma}{y_1,\ldots,y_k}{\bar D^{\times k}}{W^{2+r,p}_0(D)}}.
\end{align*}
\end{proof}

\section{Recursion on the correlations - sparse discretization}
\label{sec:Sparse discretization}

Within this section we aim at deriving a discretization for the recursive problem~\eqref{eq:recursion}. In particular, the differential operator in the spatial variable $x$ will be discretized by the finite element method, whereas the parametric dependence on the variable $\vet y$ will be approximated by a sparse interpolation technique.
To this end, we first recall some preliminary results on the standard finite element projector $\pi_h$, and the sparse interpolant operator $\spinterp{L}$.

\subsection{Finite element projector}

Given a regular triangulation $\T_h$ of the domain $D$ with discretization parameter $h>0$, we denote with $\mathbb P_\mu(\T_h)$ the standard conforming finite element space of degree $\mu\geq 1$ defined on $\T_h$. There holds:
\begin{equation}
	\label{eq:Pmu_approx}
	\min_{v\in\mathbb P_\mu(\T_h)}\norm{u-v}{W^{1,p}(D)}
		\leq C_{fem} h^\beta \seminorm{u}{W^{2+r,p}(D)}
	\quad \forall\,u\in W^{2+r,p}(D),
\end{equation}
with $\beta=\min\{\mu,2+r\}-1$.

Let $\pi_h:W^{1,p}_0(D)\cap W^{2+r,p}(D)\rightarrow \mathbb P_\mu(\T_h)$ be the finite element projector, i.e., the operator which associates $u$ to its finite dimensional approximation via the finite element method. There holds (see~\cite[Chapter 8]{Brenner2007}):
\begin{equation}
	\label{eq:estimate_Pmu}
	\norm{u-\pi_h u}{W^{1,p}(D)}
	\leq C_{\pi_h} h^\beta \seminorm{u}{W^{2+r,p}(D)}
	\quad\forall u\in W^{2+r,p}(D),
\end{equation}
where $C_{\pi_h}>0$ is independent of $h$.

\subsection{Sparse interpolant operator}

Let $\{V_\ell\}_{l\geq 0}$ be a dense sequence of nested finite dimensional subspaces of $\Holder{0,\gamma}{\bar D}$, and let the discretization parameter $h_\ell$ of $V_\ell$ be $h_\ell:=\frac{h_{\ell-1}}{2}$, so that $h_\ell=h_0\, 2^{-\ell}$. 
Denote with $\{a_j^\ell\}_{j=1}^{N_\ell}\subset D$ a set of interpolation points unisolvent in $V_\ell$, and with $\{\xi_j^\ell\}_{j=1}^{N_\ell}$ the Lagrangian basis of $V_\ell$ such that $\xi_j^\ell(a_i^\ell)=\delta_{i,j}$ for all $i,j=1,\ldots,N_\ell$. Moreover, let $P_\ell:\Holder{0,\gamma}{\bar D}\rightarrow V_\ell$ be the Lagrangian interpolation operator, that is $P_\ell(v)=\sum_{j=1}^{N_\ell} v(a_j^{\ell}) \xi_j^\ell$, and note that $P_\ell$ is a projector, too.
Assume that $P_\ell$ satisfies the following property:
\begin{equation}
	\label{eq:approx_Pl}
	\norm{u-P_\ell u}{\Holder{0,\gamma}{\bar D}}
	\leq C\, h_\ell^s\norm{u}{\Holder{n,\gamma}{\bar D}} 
	\quad \forall u\in \Holder{n,\gamma}{\bar D},
\end{equation}
where $C>0$ is independent of $h_\ell$, and $s>0$. 

Following~\cite{Nobile2008}, we define the sparse interpolation operator as follows. Let $\Delta_\ell:=P_\ell-P_{\ell-1}$ be the difference operator. 
Given $k,\, L$ positive integers, the \emph{sparse interpolation operator of level $L$} is defined as: 
\begin{equation}
\label{eq:sparse_projector}
\spinterp{L,k}:=\sum_{\substack{\bm \ell=(\ell_1,\ldots,\ell_k)\in\N^k\\
	\abs{\bm \ell}\leq L}}
	\bigotimes_{j=1}^k \Delta_{\ell_j}.
\end{equation}
The sparse interpolation operator $\spinterp{L,k}$ maps the H\"older space with mixed regularity $\Holdermix{0,\gamma}{\bar D^{\times k}}$ onto the sparse tensor product space $\spfe{L,k}$, defined as
\begin{equation}
	\label{eq:stp_subspace}
	\spfe{L,k}:=
	\bigcup_{\substack{\bm{\ell}=(\ell_1,\ldots,\ell_k)\in\N^k\\\abs{\bm \ell}\leq L}} 
	\bigotimes_{j=1}^k V_{\ell_j}.
\end{equation}

The application of the sparse interpolation operator $\spinterp{L,k}$ to a function implies the evaluation of the function itself in a finite set of points - the \emph{sparse grid} - denoted as $\mathcal H_{L,k}$.
To lighten the notation, the sparse interpolation operator $\spinterp{L,k}$ will be simply denoted as $\spinterp{L}$, when no confusion occurs.

\begin{proposition}
\label{prop:approx_properties}
Let $k$ be a positive integer and $W$ a Banach space.
Then it holds:
\begin{equation}
	\label{eq:approx_properties}
	\norm{\spinterp{L,k}u-u}
	{\Cspacemixpedex{0,\gamma}{\vet y}{\bar D^{\times k}}{W_x}}
	\leq C_{\spinterp{L,k}}\, h_L^{s(1-\tau)}
	\norm{u}{\Cspacemixpedex{n,\gamma}{\vet y}{\bar D^{\times k}}{W_x}}
\end{equation}
for all $u=u(x,\vet y)\in \Cspacemixpedex{n,\gamma}{\vet y}{\bar D^{\times k}}{W_x}$, with $\vet y=(y_1,\ldots,y_k)\in \bar D^{\times k}$, where $0<\tau<1$, and $C_{\spinterp{L,k}}$ is a positive constant independent of $h_L$ (but blowing up when $\tau\rightarrow 0$).
\end{proposition}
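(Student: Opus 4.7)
The plan is to exploit the tensor-product structure of the sparse interpolation operator together with the one-dimensional error estimate~\eqref{eq:approx_Pl}, via a standard hierarchical/telescoping argument in the spirit of~\cite{Nobile2008}.

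First, I would invoke the density of $\{V_\ell\}_{\ell\geq 0}$ in $\Holder{0,\gamma}{\bar D}$ together with the fact that $\Delta_{\ell_j} = P_{\ell_j} - P_{\ell_j-1}$ (setting $P_{-1}=0$) to write the hierarchical expansion
\begin{equation*}
    u = \sum_{\bm\ell\in\N^k}\left(\bigotimes_{j=1}^k \Delta_{\ell_j}\right) u,
    \qquad
    u-\spinterp{L,k}u = \sum_{\substack{\bm\ell\in\N^k\\\abs{\bm\ell}>L}}
    \left(\bigotimes_{j=1}^k\Delta_{\ell_j}\right) u.
\end{equation*}
Here each operator $\bigotimes_{j=1}^k \Delta_{\ell_j}$ acts only on the $\vet y$ variables with values in $W_x$, so it commutes with the $W_x$-norm.

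The next step is a product estimate: for each fixed $\bm\ell\in\N^k$ I would show
\begin{equation*}
    \norm{\left(\bigotimes_{j=1}^k\Delta_{\ell_j}\right) u}
    {\Cspacemixpedex{0,\gamma}{\vet y}{\bar D^{\times k}}{W_x}}
    \leq \tilde C^{k}\,\prod_{j=1}^k h_{\ell_j}^s\,
    \norm{u}{\Cspacemixpedex{n,\gamma}{\vet y}{\bar D^{\times k}}{W_x}}.
\end{equation*}
The main tool is the isomorphism~\eqref{eq:holder_mix_n} of Proposition~\ref{prop:holder_mix}, which lets me regard $\Cspacemixpedex{0,\gamma}{\vet y}{\bar D^{\times k}}{W_x}$ as an iterated $\Holder{0,\gamma}{\bar D}$-valued mixed H\"older space in the remaining $k-1$ variables. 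Applying the univariate bound~\eqref{eq:approx_Pl} to $\Delta_\ell = P_\ell - P_{\ell-1}$ (which satisfies $\norm{\Delta_\ell v}{\Holder{0,\gamma}{\bar D}}\leq \tilde C h_\ell^s \norm{v}{\Holder{n,\gamma}{\bar D}}$ by the triangle inequality and the nestedness $V_{\ell-1}\subset V_\ell$) one direction at a time, and peeling off one variable at each iteration via~\eqref{eq:holder_mix_n}, one obtains the product bound above.

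Finally, using $h_{\ell_j}=h_0\, 2^{-\ell_j}$, summing over $\abs{\bm\ell}>L$ and grouping by the level $m=\abs{\bm\ell}$ gives
\begin{equation*}
    \norm{u-\spinterp{L,k}u}{\Cspacemixpedex{0,\gamma}{\vet y}{\bar D^{\times k}}{W_x}}
    \leq \tilde C^k h_0^{ks}\,\norm{u}{\Cspacemixpedex{n,\gamma}{\vet y}{\bar D^{\times k}}{W_x}}
    \sum_{m>L}\binom{m+k-1}{k-1} 2^{-sm}.
\end{equation*}
The combinatorial cardinality $\binom{m+k-1}{k-1}$ is polynomial in $m$ of degree $k-1$. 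The key elementary estimate is that, for any $\tau\in(0,1)$,
\begin{equation*}
    \sum_{m>L} m^{k-1} 2^{-sm} \leq \tilde C_\tau\, 2^{-sL(1-\tau)},
\end{equation*}
with $\tilde C_\tau$ blowing up as $\tau\to 0$; this follows from $m^{k-1}\leq \tilde C_\tau\, 2^{s\tau m}$ and summing a geometric series. Since $2^{-sL(1-\tau)} = (h_L/h_0)^{s(1-\tau)}$, estimate~\eqref{eq:approx_properties} follows.

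The main obstacle is the combinatorial bookkeeping in the middle step: the product bound on $\bigotimes_j \Delta_{\ell_j}$ requires iteratively commuting the single-variable estimate through the $W_x$-valued mixed H\"older norm, which must be done carefully using~\eqref{eq:holder_mix_n}. The loss of an arbitrarily small fraction $\tau$ in the convergence rate is exactly what absorbs the polynomial factor $L^{k-1}$ produced by the counting of multi-indices with $\abs{\bm\ell}=m$, and explains the blow-up of $C_{\spinterp{L,k}}$ as $\tau\to 0$.
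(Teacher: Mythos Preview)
Your proposal is correct and follows essentially the same approach as the paper: hierarchical expansion of the error as $\sum_{|\bm\ell|>L}\bigotimes_j\Delta_{\ell_j}u$, a product bound on the detail operators obtained by iterating the univariate estimate~\eqref{eq:approx_Pl} through the isomorphism~\eqref{eq:holder_mix_n}, and a tail-sum estimate producing the $h_L^{s(1-\tau)}$ rate. The only cosmetic difference is in the last step: the paper bounds $\sum_{|\bm\ell|>L}2^{-s|\bm\ell|}$ directly via the inequality $\sum_{|\bm\ell|>L}2^{-s|\bm\ell|}\leq (1-2^{-s\tau})^{-k}2^{-Ls(1-\tau)}$ (citing \cite[Lemma~6.10]{Bonizzoni2014a}), whereas you group by level $m=|\bm\ell|$, count multi-indices, and absorb the polynomial factor $m^{k-1}$ into $2^{s\tau m}$ --- both routes are standard and yield the same constant behavior.
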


\begin{proof}
The bound~\eqref{eq:approx_properties} is derived by standard computations (see, e.g.,~\cite{Bungartz2004}). For completeness, we report here all the steps.

Denote with $\hat{\vet y}_i\in\bar D^{\times (k-1)}$ the vector $(y_1,\ldots,y_{i-1},y_{i+1},\ldots,y_k)$.
We start giving an upper bound for the norm $\norm{\Delta_\ell\otimes \Id^{\otimes (k-1)}u(\cdot,\hat{\vet y}_1)}{\Cspacemixpedex{0,\gamma}{y_1}{\bar D}{W_x}}$. Using the triangular inequality and~\eqref{eq:approx_Pl}, we have
\begin{align}
	\nonumber
	& \norm{\Delta_{\ell}\otimes \Id^{\otimes (k-1)}u(\cdot,\hat{\vet y}_1)}{\Cspacemixpedex{0,\gamma}{y_1}{\bar D}{W_x}}\\
	\nonumber
	&\quad\leq
	\norm{(\interp{\ell}-\Id)\otimes \Id^{\otimes (k-1)}u(\cdot,\hat{\vet y}_1)}{\Cspacemixpedex{0,\gamma}{y_1}{\bar D}{W_x}}\\
	\nonumber
	&\quad\quad
	+ \norm{(\interp{\ell-1}-\Id)\otimes \Id^{\otimes (k-1)}u(\cdot,\hat{\vet y}_1)}{\Cspacemixpedex{0,\gamma}{y_1}{\bar D}{W_x}}\\
	\nonumber
	&\quad\leq 
	\left(C h_\ell^s+C h_{\ell-1}^s)\right)
	\norm{u(\cdot,\hat{\vet y}_1)}{\Cspacemixpedex{n,\gamma}{y_1}{\bar D}{W_x}}\\
	\nonumber
	&\quad\leq 
	2 C h_{\ell-1}^s
	\norm{u(\cdot,\hat{\vet y}_1)}{\Cspacemixpedex{n,\gamma}{y_1}{\bar D}{W_x}}\\
	\label{eq:a}
	&\quad\leq 
	2 C h_{0}^s\, 2^{-s(\ell+1)}
	\norm{u(\cdot,\hat{\vet y}_1)}{\Cspacemixpedex{n,\gamma}{y_1}{\bar D}{W_x}},
\end{align}
where we have used that $h_{\ell}\leq h_{\ell-1}$, and $h_{\ell-1}=h_0\,2^{-\ell-1}$

Using~\eqref{eq:a}, it follows:
\begin{align*}
	\nonumber
	& \norm{\Delta_{\ell_1}\otimes \Delta_{\ell_2}\otimes\Id^{\otimes (k-2)}u}{\Cspacemixpedex{0,\gamma}{y_1,y_1}{\overline{D\times D}}{W_x}}\\
	& \quad
	= \norm{(\Delta_{\ell_1}\otimes\Id^{\otimes (k-1)})
	\otimes
	(\Id\otimes\Delta_{\ell_2}\otimes\Id^{\otimes (k-2)})
	u}{\Cspacemixpedex{0,\gamma}{y_1,y_1}{\overline{D\times D}}{W_x}}\\
	&\quad \leq 
	4 C^2 h_{0}^{2s}\, 2^{-s(\ell_1+1)(\ell_2+1)}
	\norm{u}{\Cspacemixpedex{n,\gamma}{y_1,y_2}{\overline{D\times D}}{W_x}}.
\end{align*}

By recursion, we have
\begin{align}
	\nonumber
	&\norm{\Delta_{\ell_1}\otimes\cdots\otimes \Delta_{\ell_k}u}
		{\Cspacemixpedex{0,\gamma}{\vet y}{\bar D}{W}}\\
	\label{eq:b}
	&\quad 
	\leq 2^k C^k \left(\frac{h_0}{2}\right)^{sk}
		 2^{-s\abs{\bm\ell}}
		\norm{u}{\Cspacemixpedex{n,\gamma}{\vet y}{\bar D}{W_x}}.
\end{align}
By~\eqref{eq:b}, it follows that the series $\sum_{\bm\ell\in\N^k}\otimes_{n=1}^k\Delta_{\ell_n}u$ is absolutely convergent, and that $\sum_{\abs{\bm\ell}\leq L}\otimes_{n=1}^k\Delta_{\ell_n}u$ converges to $u$ as $L\rightarrow\infty$ in $\Cspacemixpedex{0,\gamma}{\vet y}{\bar D^{\times k}}{W_x}$.

Finally, we have
\begin{align*}
	& \norm{\spinterp{L}u-u}
		{\Cspacemixpedex{0,\gamma}{\vet y}{\bar D^{\times k}}{W_x}}
	= \norm{\sum_{\abs{\bm \ell}\leq L}\bigotimes_{n=1}^k \Delta_{\ell_n}
		u-u}
		{\Cspacemixpedex{0,\gamma}{\vet y}{\bar D^{\times k}}{W_x}}\\
	& \quad = \norm{\sum_{\abs{\bm \ell}> L}
		\bigotimes_{n=1}^k \Delta_{\ell_n}
		u}{\Cspacemixpedex{0,\gamma}{\vet y}{\bar D^{\times k}}{W_x}}
	\leq \sum_{\abs{\bm \ell}> L} 
		\norm{\bigotimes_{n=1}^k \Delta_{\ell_n}
		u}{\Cspacemixpedex{0,\gamma}{\vet y}{\bar D^{\times k}}{W_x}}\\
	& \quad 
	\leq 2^{k} C^k \left(\frac{h_0}{2}\right)^{sk}
		\left(\sum_{\abs{\bm \ell}> L} 2^{-s\abs{\bm\ell}}\right)
		\norm{u}{\Cspacemixpedex{n,\gamma}{\vet y}{\bar D}{W_x}}.
\end{align*}
In~\cite[Lemma 6.10]{Bonizzoni2014a} the authors prove that
\begin{equation*}
	\sum_{\abs{\bm\ell}>L} 2^{-s\abs{\bm\ell}}
	\leq \left(\frac{1}{1-2^{-s\tau}}\right)^k
	2^{-Ls(1-\tau)}
	= \left(\frac{1}{1-2^{-s\tau}}\right)^k
	h_0^{s(\tau-1)}\,h_L^{s(1-\tau)},
\end{equation*}
with $0<\tau<1$. 
Hence, we conclude~\eqref{eq:approx_properties} with 
\begin{equation*}
	C_{\spinterp{L,k}}=2^{k} \left(\frac{h_0}{2}\right)^{sk} h_0^{s(\tau-1)} C^k \left(\frac{1}{1-2^{-s\tau}}\right)^k.
\end{equation*}
\end{proof}

\begin{remark}
The result proved in Theorem~\ref{prop:approx_properties} holds whenever $P_\ell$ is any operator fulfilling~\eqref{eq:approx_Pl}.
\end{remark}

\subsection{Sparse discretization of the recursion}

As highlighted in Section~\ref{sec:analytical_derivation_Eu}, the input of the recursion - at the continuous level - is the $(k+1)$-points correlation $\corr{0,k}$. 
In the same way - at the discrete level - we start giving a (sparse) discretization of $\corr{0,k}$. It is obtained in two consecutive steps. First, we define the FE approximation of $u^0$ by applying the FE projector $\pi_h$ to $u^0$, i.e., $u_h^0:=\pi_h u^0$. The fully-discrete sparse approximation of $\corr{0,k}$, denoted as $\corrfd{0,k}$, is then obtained by applying the sparse interpolant operator $\spinterp{L,k}$ (with $L$ such that $h_L=h$) to the semi-discrete correlation $\corrsd{0,k}:=u_h^0\otimes\corr{k}$, i.e.,
\begin{equation*}
	\corrfd{0,k}:=\spinterp{L,k}\corrsd{0,k}=\pi_h u^0 \otimes \spinterp{k,L}\corr{k}.
\end{equation*}
Note that the semi-discrete correlation $\corrsd{0,k}$ is an element of $\Cspacemix{n,\gamma}{\bar D^{\times k}}{\mathbb P_\mu(\T_h)}$, whereas the fully-discrete correlation $\corrfd{0,k}$ is an element of the tensor product space $\mathbb P_\mu(\T_h)\otimes \spfe{L,k}$. 

Let $i=k-1,\ldots,0$ fixed. The fully-discrete sparse approximation of the correlation $\corr{k-i,i}$ is obtained as
\begin{equation*}
	\corrfd{k-i,i}:=\spinterp{L,i}\corrsd{k-i,i},
\end{equation*}
where the semi-discrete correlation $\corrsd{k-i,i}$ is defined as the unique solution of the following recursive problem: given all lower order terms $\corrfd{k-i-j,i+j}\in \mathbb P_\mu(\T_h)\otimes \spfe{L,i+j}$ for $j=1,\ldots, k-i$, find $\corrsd{k-i,i}\in \Cspacemix{n,\gamma}{\bar D^{\times i}}{\mathbb P_\mu(\T_h)}$ such that
\begin{align}
	\nonumber
	& \int_D \Grad\corrsd{k-i,i}(x,\vet y)\cdot \Grad \varphi_h(x)\, dx\\
	\label{eq:semidiscrete_corr}
	& \quad =
	-\sum_{j=1}^{k-1} \bc{k-i}{j}
	\int_D (\Trdiag{j+1} \Grad \corrfd{k-i-j,i+j})(x,\vet y)\cdot \Grad \varphi_h(x)\, dx
\end{align}
for all $\varphi\in\mathbb P_\mu(\T_h)$.

In the next theorem we analyze the discretization error.
\begin{theorem}
\label{thm:discretization_recursion}
Let~\eqref{eq:Pmu_approx}, ~\eqref{eq:estimate_Pmu} and~\eqref{eq:approx_Pl} hold. Moreover, let the assumptions of Theorem~\ref{thm:regularity} be satisfied.
Then, it holds
\begin{equation}
	\label{eq:order_convergence}
	\norm{(\corr{k-i,i}-\corrfd{k-i,i})(x,\vet y)}{W^{1,p}_x(D)}=O(\min\{h^\beta,h_L^{s(1-\tau)}\}),
\end{equation}
where $0<\tau<1$ has been introduced in Proposition~\ref{prop:approx_properties}.
\end{theorem}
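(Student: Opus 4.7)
The plan is to proceed by downward induction on $i$, from $i=k$ to $i=0$, establishing the somewhat stronger uniform-in-$\vet y$ bound
\begin{equation*}
\norm{\corr{k-i,i}-\corrfd{k-i,i}}{\Cspacemixpedex{0,\gamma}{\vet y}{\bar D^{\times i}}{W^{1,p}_x(D)}}
\leq C\min\{h^\beta, h_L^{s(1-\tau)}\},
\end{equation*}
from which \eqref{eq:order_convergence} follows at once. For the base case $i=k$, I would exploit the tensor product structure $\corr{0,k}=u^0\otimes\mean{Y^{\otimes k}}$ and $\corrfd{0,k}=\pi_h u^0\otimes\spinterp{L,k}\mean{Y^{\otimes k}}$; adding and subtracting $\pi_h u^0\otimes\mean{Y^{\otimes k}}$ splits the error into a spatial FE contribution bounded by \eqref{eq:estimate_Pmu} and a sparse interpolation contribution bounded by Proposition~\ref{prop:approx_properties}, the requisite mixed H\"older regularity of $\mean{Y^{\otimes k}}$ being guaranteed by Proposition~\ref{prop:reg_CorrY}.

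For the inductive step, I would split
\begin{equation*}
\corr{k-i,i}-\corrfd{k-i,i}
=\bigl(\corr{k-i,i}-\corrsd{k-i,i}\bigr)+\bigl(\corrsd{k-i,i}-\spinterp{L,i}\corrsd{k-i,i}\bigr),
\end{equation*}
recalling that $\corrfd{k-i,i}=\spinterp{L,i}\corrsd{k-i,i}$. The second addend is handled directly by Proposition~\ref{prop:approx_properties}, provided the semi-discrete correlation inherits enough mixed H\"older regularity in $\vet y$. To establish this, I would mimic the proof of Proposition~\ref{prop:bound_recursion} at the semi-discrete level: each fully-discrete lower-order term $\corrfd{k-i-j,i+j}$ lies in $\Cspacemixpedex{n,\gamma}{\vet y}{\bar D^{\times(i+j)}}{W^{1,p}_0(D)\cap W^{2+r,p}(D)}$ because it is piecewise polynomial in $x$ and a finite combination of Lagrange tensor-product basis functions in $\vet y$, so the trace estimate \eqref{eq:traccia_Wp_Cnmix} and FE stability propagate regularity to $\corrsd{k-i,i}$ uniformly in $h$.

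For the first summand I would insert the intermediate Galerkin approximation $\widetilde w_h(x,\vet y)\in\mathbb P_\mu(\T_h)$ of $\corr{k-i,i}$ computed with the \emph{exact} lower-order correlations on the right-hand side, and split
\begin{equation*}
\corr{k-i,i}-\corrsd{k-i,i}=(\corr{k-i,i}-\widetilde w_h)+(\widetilde w_h-\corrsd{k-i,i}).
\end{equation*}
The first piece is a pure FEM error controlled by \eqref{eq:estimate_Pmu} and the regularity of $\corr{k-i,i}$ from Theorem~\ref{thm:regularity}, yielding $O(h^\beta)$ uniformly in $\vet y$. The second piece solves a discrete Laplace--Dirichlet problem whose forcing is exactly $\sum_{j=1}^{k-i}\binom{k-i}{j}\Trdiag{j+1}\Grad(\corr{k-i-j,i+j}-\corrfd{k-i-j,i+j})$; by the $W^{1,p}$--$W^{1,q}$ inf-sup stability of the discrete Laplacian and the trace estimate \eqref{eq:traccia_Wp_Cnmix}, this is bounded by $\sum_{j=1}^{k-i}\binom{k-i}{j}C_{tr}^j\,\|\corr{k-i-j,i+j}-\corrfd{k-i-j,i+j}\|_{\Cspacemixpedex{0,\gamma}{\vet y}{\bar D^{\times(i+j)}}{W^{1,p}_x(D)}}$, and each summand is $O(\min\{h^\beta,h_L^{s(1-\tau)}\})$ by the inductive hypothesis.

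The main obstacle I foresee is controlling the mixed H\"older regularity of the semi-discrete correlation $\corrsd{k-i,i}$ uniformly in $h$: Proposition~\ref{prop:approx_properties} requires the $\Cspacemixpedex{n,\gamma}{\vet y}{\bar D^{\times i}}{W^{1,p}_x(D)}$-norm of $\corrsd{k-i,i}$ to be bounded independently of $h$, which forces us to replay Proposition~\ref{prop:bound_recursion} at the discrete level in the $W^{2+r,p}$ scale. A further delicate point is the reliance on $W^{1,p}$--$W^{1,q}$ discrete inf-sup stability of the FE Laplacian for $p\neq 2$; this is classical but tacitly assumes quasi-uniformity of $\T_h$ and sufficient regularity of $\partial D$, both already in force via $\partial D\in C^{2+r}$.
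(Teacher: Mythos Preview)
Your overall strategy matches the paper's closely: the paper encodes the same induction in an explicit recursion (Lemma~\ref{lem:error_recursion}, with coefficients $\theta_{n,m}$) obtained from Strang's lemma, then bounds each resulting sum via Proposition~\ref{prop:bound_recursion}, Proposition~\ref{prop:approx_properties}, and Lemma~\ref{lem:approx_E0k}. Your $\widetilde w_h$ splitting is exactly the Strang decomposition used there, and your base case is the content of Lemma~\ref{lem:approx_E0k}.

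There is, however, a genuine gap in your resolution of the obstacle you yourself flag. To apply Proposition~\ref{prop:approx_properties} to $\corrsd{k-i,i}-\spinterp{L,i}\corrsd{k-i,i}$ you need an $h$-uniform bound on $\norm{\corrsd{k-i,i}}{\Cspacemixpedex{n,\gamma}{\vet y}{\bar D^{\times i}}{W^{1,p}_x}}$. Your proposed justification---that $\corrfd{k-i-j,i+j}\in\Cspacemixpedex{n,\gamma}{\vet y}{\bar D^{\times(i+j)}}{W^{1,p}_0\cap W^{2+r,p}}$ because it is piecewise polynomial in $x$ and a finite combination of Lagrange basis functions in $\vet y$---fails on both counts. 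In $x$, elements of $\mathbb P_\mu(\T_h)$ are globally only $C^0$ and hence not in $W^{2+r,p}(D)$ for $r\geq 0$. In $\vet y$, the spaces $V_\ell$ are only assumed to sit in $\Holder{0,\gamma}{\bar D}$, so sparse interpolants are generically only $C^0$ in each $y_j$; and even if they were smooth, their $\Holder{n,\gamma}{\bar D}$-norms would scale like $h_\ell^{-n}$, destroying uniformity in $h$. Since $\corrsd{k-i,i}$ is defined by~\eqref{eq:semidiscrete_corr} with a right-hand side built from traces of these $\corrfd{k-i-j,i+j}$, it inherits at best $\Holdermix{0,\gamma}{\bar D^{\times i}}$ regularity in $\vet y$, and your replay of Proposition~\ref{prop:bound_recursion} at the discrete level does not go through. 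The paper avoids this by never asserting higher regularity of the discrete objects: it inserts the \emph{exact} correlation via the triangle inequality, bounding $\norm{\corrsd{k-m,m}}{\Cspacemixpedex{n,\gamma}{\vet y^{(m)}}{\bar D^{\times(m-i)}}{W^{1,p}_x}}$ by the corresponding norm of $\corr{k-m,m}$ (controlled by Proposition~\ref{prop:bound_recursion}) plus $\norm{\corr{k-m,m}-\corrsd{k-m,m}}{\Cspacemixpedex{n,\gamma}{\vet y^{(m)}}{\bar D^{\times(m-i)}}{W^{1,p}_x}}$, the latter handled by an $O(h^\beta)$ FEM-type estimate.
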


To prove Theorem~\ref{thm:discretization_recursion}, we need to show some preliminary results.

\begin{lemma}
\label{lem:error_recursion}
Let the assumptions of Theorem~\ref{thm:discretization_recursion} hold, and define 
\begin{equation}
	\label{eq:theta}
	\theta_{n,m}=
	\left\{\begin{array}{ll}
	1, & \text{ if }n=m\\
	0, & \text{ if }n<m\\
	C_S \sum_{j=1}^{n-m}\bc{n}{j}C_{tr}^j\theta_{n-j,m}, &\text{ if }n>m,
	\end{array}\right.
\end{equation}
$C_{tr}$ being as in Proposition~\ref{prop:traccia_Wp}.
Then, it holds:
\begin{align}
	\nonumber
	&\norm{(\corr{k-i,i}-\corrfd{k-i,i})(x,\vet y)}{W^{1,p}_x(D)}\\
	\nonumber
	&\quad\leq
	C_S\, C_{fem}\, h^\beta \sum_{m=i}^{k-1}\, \theta_{k-i,k-m}
	\norm{\corr{k-m,m}(x,\vet y^{(m)};\vet y)}
	{\Cspacemixpedex{0,\gamma}{\vet y^{(m)}}{\bar D^{\times(m-i)}}{W^{2+r,p}_x(D)}}\\
	\nonumber
	&\quad +
	\sum_{m=i}^{k-1} \theta_{k-i,k-m}
	\norm{(\corrsd{k-m,m}-\corrfd{k-m,m})(x,\vet y^{(m)};\vet y)}
	{\Cspacemixpedex{0,\gamma}{\vet y^{(m)}}{\bar D^{\times(m-i)}}{W^{1,p}_x(D)}}\\
	\label{eq:error_recursion}
	&\quad +
	\theta_{k-i,0}
	\norm{(\corr{0,k}-\corrfd{0,k})(x,\vet y^{(k)};\vet y)}
	{\Cspacemixpedex{0,\gamma}{\vet y^{(k)}}{\bar D^{\times(k-i)}}{W^{1,p}_x(D)}}
\end{align}
for all $\vet y:=(y_{k-i+1},\ldots,y_k)\in \bar D^{\times i}$, where $\vet y^{(m)}:=(y_{k-m+1},\ldots,y_{k-i})\in\bar D^{\times(m-i)}$, for $m=i,\ldots,k$.
\end{lemma}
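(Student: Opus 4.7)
The starting point is the splitting
\begin{equation*}
\corr{k-i,i}-\corrfd{k-i,i}=\bigl(\corr{k-i,i}-\corrsd{k-i,i}\bigr)+\bigl(\corrsd{k-i,i}-\corrfd{k-i,i}\bigr),
\end{equation*}
in which the sparse-interpolation summand is retained as a terminal error (entering \eqref{eq:error_recursion} with multiplier $\theta_{k-i,k-i}=1$), while the Galerkin summand is analyzed by a Strang-type argument. I would insert an auxiliary $\tilde w_h\in\mathbb P_\mu(\T_h)$ defined as the FE solution of the \emph{continuous} right-hand side (each $\corrfd{k-i-j,i+j}$ replaced by $\corr{k-i-j,i+j}$); the best-approximation estimate \eqref{eq:estimate_Pmu} together with the regularity granted by Theorem~\ref{thm:regularity} yields $\|\corr{k-i,i}-\tilde w_h\|_{W^{1,p}_x}\le C_{fem}h^{\beta}\|\corr{k-i,i}(\cdot,\vet y)\|_{W^{2+r,p}}$, and the $W^{1,p}$-stability of the Galerkin solution from \cite[Ch.~7]{Simader1972}, applied to the problem satisfied by $\tilde w_h-\corrsd{k-i,i}$ (whose right-hand side is precisely the difference of source terms), gives
\begin{equation*}
\|\tilde w_h-\corrsd{k-i,i}\|_{W^{1,p}_x}\le C_S\sum_{j=1}^{k-i}\bc{k-i}{j}\bigl\|\Trdiaggen{1}{j+1}\Grad(\corr{k-i-j,i+j}-\corrfd{k-i-j,i+j})(\cdot,\vet y)\bigr\|_{L^p_x}.
\end{equation*}

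Next, Proposition~\ref{prop:traccia_Wp} (with $m=1$) replaces each $L^p_x$-norm of the diagonal trace of a gradient by $C_{tr}^{j}$ times the mixed H\"older norm of $(\corr-\corrfd)_{k-i-j,i+j}$ over the $j$ collapsed parameter variables, with the remaining $i$ parameters fixed at $\vet y$. Combining the pieces yields the one-step bound
\begin{align*}
&\|(\corr{k-i,i}-\corrfd{k-i,i})(\cdot,\vet y)\|_{W^{1,p}_x}\le C_S C_{fem}h^{\beta}\|\corr{k-i,i}(\cdot,\vet y)\|_{W^{2+r,p}}\\
&\quad +\|(\corrsd{k-i,i}-\corrfd{k-i,i})(\cdot,\vet y)\|_{W^{1,p}_x}+C_S\sum_{j=1}^{k-i}\bc{k-i}{j}C_{tr}^{j}\bigl\|\corr{k-i-j,i+j}-\corrfd{k-i-j,i+j}\bigr\|_{\Cspacemixpedex{0,\gamma}{}{\bar D^{\times j}}{W^{1,p}_x}},
\end{align*}
which is the relation to be iterated. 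Since the right-hand side involves mixed H\"older norms rather than pointwise-in-$\vet y$ $W^{1,p}_x$ norms, I would first prove a mixed-H\"older-norm strengthening of the claim so that the inductive hypothesis matches; this strengthening follows from the same argument after observing that the parameter-wise difference quotients $D^{\gamma,mix}_{\vet i,\vet h}$ commute with the $x$-Laplacian and with the $\vet y$-independent Galerkin projector $\pi_h$, so that the Strang bound above applies verbatim to $D^{\gamma,mix}_{\vet i,\vet h}(\corr{k-i,i}-\corrsd{k-i,i})$.

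Finally, iteration proceeds by induction on $n:=k-i$. The base case $n=1$ (i.e., $i=k-1$) has only $j=1$ and involves only the input correlation $\corr{0,k}$, yielding directly $\theta_{1,0}=C_S C_{tr}$. For the inductive step, substituting the strengthened inductive hypothesis into each of the $j=1,\ldots,k-i$ terms in the one-step bound and collecting constants reproduces exactly the defining recursion $\theta_{n,m}=C_S\sum_{j=1}^{n-m}\bc{n}{j}C_{tr}^{j}\theta_{n-j,m}$ as the multiplier in front of $\|\corr{k-m,m}\|$ and of $\|\corrsd{k-m,m}-\corrfd{k-m,m}\|$ at each level $m\ge i$, together with $\theta_{k-i,0}$ in front of the input error. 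The main obstacle is the combinatorial bookkeeping of the cascading constants, i.e.\ verifying that the coefficients produced by unfolding the recursion indeed coincide with \eqref{eq:theta}; the analytic ingredients (Strang's first lemma, Proposition~\ref{prop:traccia_Wp}, and the commutation of $\pi_h$ with parameter-wise difference quotients) are supplied by the earlier sections.
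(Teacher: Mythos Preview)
Your proposal is correct and follows essentially the same route as the paper: the same error splitting, Strang's lemma (your auxiliary $\tilde w_h$ is just the standard Strang decomposition spelled out), Proposition~\ref{prop:traccia_Wp} to pass from diagonal traces to mixed H\"older norms, and induction in $k-i$ with the combinatorial verification that the cascading constants reproduce~\eqref{eq:theta}. Your remark that the induction really requires a mixed-H\"older-norm strengthening (via commutation of $D^{\gamma,mix}_{\vet i,\vet h}$ with the $x$-Laplacian and the $\vet y$-independent projector) is a point the paper handles only implicitly, so you are in fact slightly more careful there; the only slip is that the best-approximation constant $C_{fem}$ comes from~\eqref{eq:Pmu_approx}, not from~\eqref{eq:estimate_Pmu}.
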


\begin{proof}
Let $k$ be fixed. We prove the result by induction on $i$. 
If $i=k$, then~\eqref{eq:error_recursion} holds as an equality. 

Let $i<k$, and assume, by induction, that~\eqref{eq:error_recursion} holds for all $i+1\leq j\leq k$. Denote $e_{k-i,i}:=\corr{k-i,i}-\corrfd{k-i,i}$, and $f_{k-i,i}:=\corrsd{k-i,i}-\corrfd{k-i,i}$.
By triangular inequality we have:
\begin{equation}
	\label{eq:tr_in}
	\norm{e_{k-i,i}(x,\vet y)}{W^{1,p}_x}
	\leq
	\norm{\corr{k-i,i}-\corrsd{k-i,i}(x,\vet y)}{W^{1,p}_x} + 
	\norm{f_{k-i,i}(\cdot,\vet y)}{W^{1,p}_x}.
\end{equation} 
Using~\cite{Rannacher1982} it is possible to prove that the discrete inf-sup condition for the Laplacian in the spaces $W^{1,p}(D)$-$W^{1,q}(D)$ holds. Applying the Strang's Lemma we have:
\begin{align}
	\nonumber
	& \norm{(\corr{k-i,i}-\corrsd{k-i,i})(x,\vet y)}{W^{1,p}_x}\\
	\label{eq:one}
	& \quad \leq C_S\left(
	\inf_{\varphi_h\in\mathbb P_\mu(\T_h)} \norm{\corr{k-i,i}(x,\vet y)-\varphi_h(x)}{W^{1,p}_x}
	+
	\sup_{\varphi_h\in\mathbb P_\mu(\T_h)} \frac{\abs{\mathcal L_{\vet y}(\varphi_h)-\mathcal L_h(\varphi_h)}}{\norm{\varphi_h}{W^{1,q}_x}}\right),
\end{align}
where $\mcL_{\vet y},\mcL_h:\mathbb P_\mu(\T_h)\rightarrow \R$ are the functionals defining the right-hand side of problems~\eqref{eq:recursion} and~\eqref{eq:semidiscrete_corr}, respectively.
The bound on the first term in the right-hand side of~\eqref{eq:one} follows from the approximation property~\eqref{eq:Pmu_approx}:
\begin{equation}
	\label{eq:two}
	\inf_{\varphi_h\in\mathbb P_\mu(\T_h)} \norm{\corr{k-i,i}(x,\vet y)-\varphi_h(x)}{W^{1,p}_x}
	\leq C_{fem}\,h^\beta \seminorm{\corr{k-i,i}(x,\vet y)}{W^{2+r,p}_x},
\end{equation}
with $\beta=\min\{\mu,2+r\}-1$.
We bound now the second term in the right-hand side of~\eqref{eq:one}. Using the H\"older inequality and Proposition~\ref{prop:traccia_Wp}, we have:
\begin{align}
	\nonumber
	&\abs{\mathcal L_{\vet y}(\varphi_h)-\mathcal L_h(\varphi_h)}\\
	\nonumber
	&\quad \leq \sum_{l=1}^{k-i} \bc{k-i}{l}
	 \abs{\int_D
	 \big(\Trdiag{l+1}\Grad(\corr{k-i-l,i+l}-\corrfd{k-i-l,i+l})\big)
	 (x,\vet y)
	 \cdot\Grad\varphi_h(x)\,dx}\\
	\nonumber
	&\quad\leq \sum_{l=1}^{k-i} \bc{k-i}{l}
	\norm{\big(\Trdiag{l+1}\Grad(\corr{k-i-l,i+l}-\corrfd{k-i-l,i+l})\big)(x,\vet y)}{L^p_x}
	\norm{\Grad \varphi_h}{L^q_x}\\
	\label{eq:three}
	&\quad\leq 
	\sum_{l=1}^{k-i} \bc{k-i}{l} C_{tr}^l
	\norm{e_{k-i-l,i+l}(x,\vet y^{(i+l)};\vet y)}
	{\Cspacemixpedex{0,\gamma}{\vet y^{(i+l)}}{\bar D^{\times l}}{W_x^{1,p}}}
	\norm{\varphi_h}{W^{1,q}_x}.
\end{align}
Inserting~\eqref{eq:one}, \eqref{eq:two} and~\eqref{eq:three} into~\eqref{eq:tr_in}, we have:
\begin{align}
	\nonumber
	& \norm{e_{k-i,i}(x,\vet y)}{W^{1,p}_x}
	\leq 
	C_S\,C_{fem}\, h^\beta \seminorm{\corr{k-i,i}(x,\vet y)}{W^{2+r,p}_x} \\
	\nonumber
	&\quad +
	C_S \sum_{l=1}^{k-i} \bc{k-i}{l} C_{tr}^l
	\norm{e_{k-i-l,i+l}(\cdot,\vet y^{(i+l)};\vet y)}
	{\Cspacemixpedex{0,\gamma}{\vet y^{(i+l)}}{\bar D^{\times l}}{W_x^{1,p}}}\\
	\label{eq:four}
	&\quad + 
	\norm{f_{k-i,i}(x,\vet y)}{W^{1,p}_x}.
\end{align}
Using the inductive assumption on $e_{k-i-l,i+l}$, we get:
\begin{align}
	\nonumber
	& \norm{e_{k-i,i}(x,\vet y)}{W^{1,p}_x}
	\leq C_S\,C_{fem}\, h^\beta \seminorm{\corr{k-i,i}(x,\vet y)}{W^{2+r,p}_x} \\
	\nonumber
	&\quad + C_S \sum_{l=1}^{k-i} \bc{k-i}{l} C_{tr}^l\\
	\nonumber
	&\quad\quad
	\Bigg(
	C_S\, C_{fem}\, h^\beta
	\sum_{m=i+l}^{k-1} \theta_{k-i-l,k-m}
	\norm{E^{k-m,m}(x,\vet y^{(m)};\vet y)}
		{\Cspacemixpedex{0,\gamma}{\vet y^{(m)}}{\bar D^{\times (m-i)}}{W^{2+r,p}_x}}\\
	\nonumber
	&\quad\quad\quad
	\sum_{m=i+l}^{k-1} \theta_{k-i-l,k-m}
	\norm{f_{k-m,m}(x,\vet y^{(m)};\vet y)}
		{\Cspacemixpedex{0,\gamma}{\vet y^{(m)}}{\bar D^{\times (m-i)}}{W^{1,p}_x}}\\
	\nonumber
	&\quad\quad\quad
	\theta_{k-i-l,0}
	\norm{e_0(x,\vet y^{(k)};\vet y)}
		{\Cspacemixpedex{0,\gamma}{\vet y^{(k)}}{\bar D^{\times (k-i)}}{W^{1,p}_x}}
	\Bigg)\\
	\label{eq:five}
	&\quad +
	\norm{f_{k-i,i}(x,\vet y)}{W^{1,p}_x}.
\end{align}
Observe that, by definition of $\theta_{k-i,0}$, we have
\begin{equation}
\label{eq:six}
	C_S \sum_{l=1}^{k-i} \bc{k-i}{l} C_{tr}^l
	\theta_{k-i-l,0}
	=\theta_{k-i,0}.
\end{equation}
Moreover, by switching the sum in $l$ and $m$, and using that $\theta_{k-i,k-i}=1$, we have
\begin{align*}
	C_S \sum_{l=1}^{k-i} \sum_{m=i+l}^{k-1} \bc{k-i}{l} C_{tr}^l\,\theta_{k-i-l,k-m} 
	& = C_S \sum_{m=i+1}^{k-1} \sum_{l=1}^{m-i}
	\bc{k-i}{l} C_{tr}^l \theta_{k-i-l,k-m}\\
	& = \sum_{m=i+1}^{k-1} \theta_{k-i,k-m},
\end{align*}
so that
\begin{align}
	\nonumber
	& C_S \sum_{l=1}^{k-i} \sum_{m=i+l}^{k-1} \bc{k-i}{l} C_{tr}^l\,\theta_{k-i-l,k-m}
	\norm{f_{k-m,m}(x,\vet y^{(m)};\vet y)}{\Cspacemix{0,\gamma}{\bar D^{\times (m-i)}}{W^{1,p}_x}}\\
	\nonumber
	&\quad +
	\norm{f_{k-i,i}(x,\vet y)}{W^{1,p}_x}\\
	\nonumber
	& = 
	\sum_{m=i+1}^{k-1} \theta_{k-i,k-m} 
	\norm{f_{k-m,m}(x,\vet y^{(m)};\vet y)}{\Cspacemix{0,\gamma}{\bar D^{\times (m-i)}}{W^{1,p}_x}}
	+ \norm{f_{k-i,i}(x,\vet y)}{W^{1,p}_x}\\
	\label{eq:seven}
	& = 
	\sum_{m=i}^{k-1} \theta_{k-i,k-m}
	\norm{f_{k-m,m}(x,\vet y^{(m)};\vet y)}{\Cspacemix{0,\gamma}{\bar D^{\times (m-i)}}{W^{1,p}_x}},
\end{align}
and
\begin{align}
	\nonumber
	& C_S^2\,C_{fem}\,h^\beta \sum_{l=1}^{k-i} \sum_{m=i+l}^{k-1} \bc{k-i}{l} C_{tr}^l\,\theta_{k-i-l,k-m}
	\norm{E^{k-m,m}(x,\vet y^{(m)};\vet y)}{\Cspacemix{0,\gamma}{\bar D^{\times (m-i)}}{W^{2+r,p}_x}}\\
	\nonumber
	&\quad +
	C_S\,C_{fem}\, h^\beta \norm{\corr{k-i,i}(x,\vet y)}{W^{2+r,p}_x}\\
	\nonumber
	& =
	C_S\,C_{fem}\,h^\beta \sum_{m+1}^{k-1} \theta_{k-i,k-m}
	\norm{E^{k-m,m}(x,\vet y^{(m)};\vet y)}{\Cspacemix{0,\gamma}{\bar D^{\times (m-i)}}{W^{2+r,p}_x}}\\
	\nonumber
	&\quad +
	C_S\,C_{fem}\, h^\beta \norm{\corr{k-i,i}(x,\vet y)}{W^{2+r,p}_x}\\
	\label{eq:eight}
	& = C_S\,C_{fem}\, h^\beta \sum_{m=i}^{k-1} \theta_{k-i,k-m}
	\norm{E^{k-m,m}(x,\vet y^{(m)};\vet y)}{\Cspacemix{0,\gamma}{\bar D^{\times (m-i)}}{W^{2+r,p}_x}}.
\end{align}
Inserting~\eqref{eq:six}, \eqref{eq:seven} and~\eqref{eq:eight} into~\eqref{eq:five}, we conclude the bound~\eqref{eq:error_recursion}.
\end{proof}

\begin{lemma}
\label{lem:approx_E0k}
Under the assumptions of Theorem~\ref{thm:discretization_recursion} it holds: 
\begin{align}
	\nonumber
	& \norm{\corr{0,k}-\corrfd{0,k}}
	{\Cspacemixpedex{0,\gamma}{y_1,\ldots,y_k}{\bar D^{\times k}}{W^{1,p}_x(D)}}\\
	\label{eq:approx_E0k}
	&\quad \leq 
	\left(C_{\pi_h} h^\beta 
	+C_{\spinterp{L,k}}\, h_L^{s(1-\tau)}
	(C_\pi\,h^\beta+1)\right)
		\norm{\corr{0,k}}
		{\Cspacemixpedex{0,\gamma}{y_1,\ldots,y_k}{\bar D^{\times k}}{W^{2+r,p}_x(D)}}.
\end{align}
\end{lemma}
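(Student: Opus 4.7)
The plan is to insert the semi-discrete correlation $\corrsd{0,k}=\pi_h u^0\otimes\mean{Y^{\otimes k}}$ between $\corr{0,k}$ and $\corrfd{0,k}$ and apply the triangle inequality, separating a finite-element piece from a sparse-interpolation piece. Since $u^0$ is deterministic and the sparse operator $\spinterp{L,k}$ acts only on the parametric variable $\vet y$, both pieces are separable tensor products:
\begin{align*}
\corr{0,k}-\corrsd{0,k} & = (u^0-\pi_h u^0)\otimes\mean{Y^{\otimes k}},\\
\corrsd{0,k}-\corrfd{0,k} & = \pi_h u^0\otimes(\Id-\spinterp{L,k})\mean{Y^{\otimes k}}.
\end{align*}
The central algebraic ingredient is the factorization of the mixed H\"older norm of a separable map established in Proposition~\ref{prop:reg_CorrY} via~\eqref{eq:norm_CorrY}: for $v(x)\otimes w(\vet y)$ one has $\norm{v\otimes w}{\Cspacemixpedex{0,\gamma}{\vet y}{\bar D^{\times k}}{V_x}}=\norm{v}{V}\cdot\norm{w}{\Holdermix{0,\gamma}{\bar D^{\times k}}}$, and similarly with higher regularity on $w$.

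For the finite-element contribution, factorization combined with the approximation estimate~\eqref{eq:estimate_Pmu} applied to $u^0$ gives
\[
\norm{(u^0-\pi_h u^0)\otimes\mean{Y^{\otimes k}}}{\Cspacemixpedex{0,\gamma}{\vet y}{\bar D^{\times k}}{W^{1,p}_x(D)}} \leq C_{\pi_h}h^\beta\,\norm{u^0}{W^{2+r,p}(D)}\,\norm{\mean{Y^{\otimes k}}}{\Holdermix{0,\gamma}{\bar D^{\times k}}},
\]
which, reassembled via~\eqref{eq:norm_CorrY}, equals $C_{\pi_h}h^\beta\,\norm{\corr{0,k}}{\Cspacemixpedex{0,\gamma}{\vet y}{\bar D^{\times k}}{W^{2+r,p}_x(D)}}$. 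This produces the first term of the claimed estimate.

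For the sparse-interpolation contribution, factorization reduces the task to controlling $\norm{\pi_h u^0}{W^{1,p}(D)}\cdot\norm{(\Id-\spinterp{L,k})\mean{Y^{\otimes k}}}{\Holdermix{0,\gamma}{\bar D^{\times k}}}$. Stability of $\pi_h$ together with~\eqref{eq:estimate_Pmu} yields $\norm{\pi_h u^0}{W^{1,p}(D)}\leq(1+C_{\pi_h}h^\beta)\,\norm{u^0}{W^{2+r,p}(D)}$. For the second factor, Proposition~\ref{prop:approx_properties} applied to the scalar function $\mean{Y^{\otimes k}}$, which lies in $\Holdermix{n,\gamma}{\bar D^{\times k}}$ by Proposition~\ref{prop:reg_CorrY} under the regularity hypothesis on $Y$, provides the bound $C_{\spinterp{L,k}}h_L^{s(1-\tau)}\,\norm{\mean{Y^{\otimes k}}}{\Holdermix{n,\gamma}{\bar D^{\times k}}}$. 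Multiplying and reassembling once more via~\eqref{eq:norm_CorrY} yields the second term of the claimed bound. No single step is genuinely difficult; the only delicate bookkeeping is that Proposition~\ref{prop:approx_properties} must be fed with the higher-order mixed H\"older regularity of $\mean{Y^{\otimes k}}$ in order to unlock the algebraic sparse rate $h_L^{s(1-\tau)}$, a regularity that is automatically available under the assumptions of Theorem~\ref{thm:regularity}.
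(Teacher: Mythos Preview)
Your proposal is correct and follows essentially the same route as the paper: the same triangle-inequality split via the semi-discrete correlation $\corrsd{0,k}$, the same finite-element estimate~\eqref{eq:estimate_Pmu} for the first piece, and the same sparse-approximation bound from Proposition~\ref{prop:approx_properties} for the second. The only cosmetic difference is that for the second piece the paper applies Proposition~\ref{prop:approx_properties} directly to the vector-valued function $\corrsd{0,k}$ and then bounds $\norm{\corrsd{0,k}}{}$ by a triangle inequality, whereas you first factor $\corrsd{0,k}=\pi_h u^0\otimes\mean{Y^{\otimes k}}$ and apply the proposition to the scalar factor $\mean{Y^{\otimes k}}$; both lead to the same estimate.
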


\begin{proof}
Using the triangular inequality, we have
\begin{align}
	\nonumber
	& \norm{\corr{0,k}-\corrfd{0,k}}
		{\Cspacemixpedex{0,\gamma}{y_1,\ldots,y_k}{\bar D^{\times k}}{W^{1,p}_x(D)}}\\
	\label{eq:E0k_one}
	& \quad \leq 
	\norm{\corr{0,k}-\corrsd{0,k}}
		{\Cspacemixpedex{0,\gamma}{y_1,\ldots,y_k}{\bar D^{\times k}}{W^{1,p}_x(D)}}
	+ \norm{\corrsd{0,k}-\corrfd{0,k}}
	{\Cspacemixpedex{0,\gamma}{y_1,\ldots,y_k}{\bar D^{\times k}}{W^{1,p}_x(D)}}.
\end{align}
We bound the two terms at the right hand side of~\eqref{eq:E0k_one} separately. Using~\eqref{eq:estimate_Pmu}, we have
\begin{align}
	\nonumber
	& \norm{\corr{0,k}-\corrsd{0,k}}
		{\Cspacemixpedex{0,\gamma}{y_1,\ldots,y_k}{\bar D^{\times k}}{W^{1,p}_x(D)}}\\	
	\nonumber
	& \quad 
	= \norm{u^0-\pi_h u^0}{W^{1,p}(D)}
	\norm{\corr{k}}{\Holdermix{0,\gamma}{\bar D^{\times k}}}\\
	\nonumber
	& \quad
	\leq C_{\pi_h} h^\beta \seminorm{u^0}{W^{2+r,p}(D)}
	\norm{\corr{k}}{\Holdermix{0,\gamma}{\bar D^{\times k}}}\\
	\label{eq:E0k_two}
	& \quad =
	C_{\pi_h} h^\beta
	\norm{\corr{0,k}}{\Cspacemix{0,\gamma}{\bar D^{\times k}}{W^{2+r,p}(D)}}.
\end{align}
Moreover, applying Proposition~\ref{prop:approx_properties}, the triangular inequality, and~\eqref{eq:estimate_Pmu}, we have
\begin{align}
	\nonumber
	& \norm{\corrsd{0,k}-\corrfd{0,k}}
	{\Cspacemixpedex{0,\gamma}{y_1,\ldots,y_k}{\bar D^{\times k}}{W^{1,p}_x(D)}}
	\leq C_{\spinterp{L,k}}\, h_L^{s(1-\tau)}
	\norm{\corrsd{0,k}}{\Cspacemixpedex{n,\gamma}{y_1,\ldots,y_k}{\bar D^{\times k}}{W^{1,p}_x(D)}}\\
	\nonumber
	&\quad\leq
	C_{\spinterp{L,k}}\, h_L^{s(1-\tau)}
	\left(\norm{\corr{0,k}-\corrsd{0,k}}{\Cspacemixpedex{n,\gamma}{y_1,\ldots,y_k}{\bar D^{\times k}}{W^{1,p}_x(D)}}
	+\norm{\corr{0,k}}{\Cspacemixpedex{n,\gamma}{y_1,\ldots,y_k}{\bar D^{\times k}}{W^{1,p}_x(D)}}\right)\\
	\label{eq:E0k_three}
	&\quad\leq
	C_{\spinterp{L,k}}\, h_L^{s(1-\tau)}
	(C_\pi\,h^\beta+1)\norm{\corr{0,k}}{\Cspacemixpedex{n,\gamma}{y_1,\ldots,y_k}{\bar D^{\times k}}{W^{1,p}_x(D)}}.
\end{align}
The result is then proved inserting~\eqref{eq:E0k_two} and~\eqref{eq:E0k_three} into~\eqref{eq:E0k_one}.
\end{proof}

\begin{proof}[Theorem~\ref{thm:discretization_recursion}]
To prove~\eqref{eq:order_convergence} we bound each term at the right-hand side of~\eqref{eq:error_recursion}, separately.
Applying Proposition~\ref{prop:bound_recursion}, we have:
\begin{align}
	\nonumber
	& C_S\, C_{fem}\, h^\beta \sum_{m=i}^{k-1}\, \theta_{k-i,k-m}
	\norm{\corr{k-m,m}(x,\vet y^{(m)};\vet y)}
	{\Cspacemixpedex{0,\gamma}{\vet y^{(m)}}{\bar D^{\times(m-i)}}{W^{2+r,p}_x(D)}}\\
	\label{eq:part1}
	& \quad \leq
	C_S\, C_{fem}\, h^\beta 
	\left(\sum_{m=i}^{k-1}\, \theta_{k-i,k-m}\,\lambda_{k-m}\right)
	\norm{\corr{0,k}(x,\vet y^{(k)};\vet y)}
		{\Cspacemixpedex{0,\gamma}{\vet y^{(k)}}{\bar D^{\times(k-i)}}{W^{2+r,p}_x(D)}}.
\end{align}
Applying Proposition~\ref{prop:approx_properties}, the triangular inequality, and Proposition~\ref{prop:bound_recursion}, we have:
\begin{align}
	\nonumber
	& \sum_{m=i}^{k-1} \theta_{k-i,k-m}
	\norm{(\corrsd{k-m,m}-\corrfd{k-m,m})(x,\vet y^{(m)};\vet y)}
	{\Cspacemixpedex{0,\gamma}{\vet y^{(m)}}{\bar D^{\times(m-i)}}{W^{1,p}_x(D)}}\\
	\nonumber
	&\quad\leq
	\sum_{m=i}^{k-1} \theta_{k-i,k-m} C_{\spinterp{L,m}}\, h_L^{s(1-\tau)}
	\norm{\corrsd{k-m,m}(x,\vet y^{(m)};\vet y)}
	{\Cspacemixpedex{n,\gamma}{\vet y^{(m)}}{\bar D^{\times(m-i)}}{W^{1,p}_x(D)}}\\
	\nonumber
	&\quad\leq h_L^{s(1-\tau)}
	\sum_{m=i}^{k-1} \theta_{k-i,k-m} C_{\spinterp{L,m}}
	\Big(\norm{\corr{k-m,m}(x,\vet y^{(m)};\vet y)}
	{\Cspacemixpedex{n,\gamma}{\vet y^{(m)}}{\bar D^{\times(m-i)}}{W^{1,p}_x(D)}}\\
	\nonumber
	&\quad\quad +
	\norm{(\corr{k-m,m}-\corrsd{k-m,m})(x,\vet y^{(m)};\vet y)}
	{\Cspacemixpedex{n,\gamma}{\vet y^{(m)}}{\bar D^{\times(m-i)}}{W^{1,p}_x(D)}}\Big)\\
	\nonumber
	&\quad\leq h_L^{s(1-\tau)}
	\sum_{m=i}^{k-1} \theta_{k-i,k-m} C_{\spinterp{L,m}}
	\Big(\lambda_{k-m}
	\norm{\corr{0,k}(x,\vet y^{(k)};\vet y)}
	{\Cspacemixpedex{n,\gamma}{\vet y^{(k)}}{\bar D^{\times(k-i)}}{W^{2+r,p}_x(D)}}\\
	\nonumber
	&\quad\quad +
	C_{\pi_h}\,h^\beta
		\norm{\corr{k-m,m}(x,\vet y^{(m)};\vet y)}
		{\Cspacemixpedex{n,\gamma}{\vet y^{(m)}}{\bar D^{\times(m-i)}}{W^{2+r,p}_x(D)}}\Big)\\
	\label{eq:part2}
	&\quad\leq h_L^{s(1-\tau)}(C_{\pi_h}\,h^\beta+1)
	\sum_{m=i}^{k-1} \theta_{k-i,k-m} C_{\spinterp{L,m}}
	\lambda_{k-m}
	\norm{\corr{0,k}(x,\vet y^{(k)};\vet y)}
	{\Cspacemixpedex{n,\gamma}{\vet y^{(k)}}{\bar D^{\times(k-i)}}{W^{2+r,p}_x(D)}}.
\end{align}
The result follows by applying Lemma~\ref{lem:approx_E0k}, and inserting~\eqref{eq:part1}, and~\eqref{eq:part2}  into~\eqref{eq:error_recursion}.
\end{proof}

\begin{remark}
The finite dimensional spaces $P_\mu(\T)$ and $V_\ell$ are defined on the same physical domain $D$. It is then natural to take $V_\ell=P_\mu(\T_\ell)$ - $\T_\ell$ having  discretization parameter $h_\ell$ - and $h=h_L$. Then,~\eqref{eq:order_convergence} becomes:
$$\norm{(\corr{k-i,i}-\corrfd{k-i,i})(x,\vet y)}{W^{1,p}_x(D)}=O(h^{\min\{\beta,s(1-\tau)\}}).$$
\end{remark}

\section{Conclusions}
\label{sec:conclusions}

This paper addresses the computation of an approximation for the expected value of the unique stochastic solution $u$ to the Darcy problem with lognormal permeability coefficient. In particular, we adopt the perturbation method - approximating the solution by its Taylor polynomial $T^Ku$ - in combination with the moment equation technique - approximating $\mean{u}$ by $\mean{T^Ku}$.
The first moment equation is recalled, and its recursive structure is explained. In particular, for each $k=0,\ldots,K$, a recursion on the $(i+1)$-points correlation $\corr{u^{k-i}\otimes Y^{\otimes i}}$, $i=1,\ldots,k$, is needed.
Well-posedness and regularity results for the recursion satisfied by $\corr{k-i,i}$ are proved. In particular we show that $\corr{u^{k-i}\otimes Y^{\otimes i}}\in\Cspacemix{n,\gamma}{\bar D^{\times i}}{W^{2+r,p}(D)}$, under the assumptions $Y\in\Holder{n,\gamma}{\bar D}$ a.s. and $u^0\in W^{2+r,p}(D)\cap W^{1,p}_0(D)$.
Finally, a sparse discretization for the recursion is analyzed, and the convergence of the sparse discretization error is proved.

The procedure proposed in this paper can be used also to approximate higher moments of $u$. In particular, we refer to~\cite{Bonizzoni2016} for the recursion on the two-points correlation of $u$, $\mean{u\otimes u}$.
Moreover, the bounds on sparse grid approximations derived in this work could also be useful to establish convergence estimates for low rank approximations as the Tensor Train considered in~\cite{Bonizzoni2016} (see, e.g.,~\cite{Schneider2014}).

\end{document}